\newtheorem{myalgorithm}[theorem]{Algorithm}
\newtheorem{sa}[theorem]{Summary of assumptions}
\begin{document}

\title{Neural Q-learning for solving PDEs}

\author{\name Samuel N. Cohen \email cohens@maths.ox.ac.uk  \\
       \addr Mathematical Institute, University of Oxford\\
       Oxford, OX2 6GG, UK
       \AND
       \name Deqing Jiang \email Deqing.Jiang@maths.ox.ac.uk \\
       \addr Mathematical Institute, University of Oxford\\
       Oxford, OX2 6GG, UK
       \AND
       \name Justin Sirignano \email sirignano@maths.ox.ac.uk\\
       \addr Mathematical Institute, University of Oxford\\
       Oxford, OX2 6GG, UK
       }

\editor{Michael Mahoney}

\maketitle

\begin{abstract}
Solving high-dimensional partial differential equations (PDEs) is a major challenge in scientific computing. We develop a new numerical method for solving elliptic-type PDEs by adapting the Q-learning algorithm in reinforcement learning. To solve PDEs with Dirichlet boundary condition, our ``Q-PDE" algorithm is mesh-free and therefore has the potential to overcome the curse of dimensionality. Using a neural tangent kernel (NTK) approach, we prove that the neural network approximator for the PDE solution, trained with the Q-PDE algorithm, converges to the trajectory of an infinite-dimensional ordinary differential equation (ODE) as the number of hidden units $\rightarrow \infty$. For monotone PDEs (i.e.\ those given by monotone operators, which may be nonlinear), despite the lack of a spectral gap in the NTK,  we then prove that the limit neural network, which satisfies the infinite-dimensional ODE, strongly converges in $L^2$ to the PDE solution as the training time $\rightarrow \infty$. More generally, we can prove that any fixed point of the wide-network limit for the Q-PDE algorithm is a solution of the PDE (not necessarily under the monotone condition). The numerical performance of the Q-PDE algorithm is studied for several elliptic PDEs. 
\end{abstract}
\vspace{5mm}
\begin{keywords}
 Deep learning, neural networks, high-dimensional PDEs, high-dimensional learning, Q-learning. 
\end{keywords}

\section{Introduction}
High dimensional partial differential equations (PDEs) are widely used in many applications in physics, engineering, and finance. It is challenging to numerically solve high-dimensional PDEs, as traditional finite difference methods become computationally intractable due to the curse of dimensionality. In the past decade, deep learning has become a revolutionary tool for a number of different areas including image recognition, natural language processing, and scientific computing. The idea of solving PDEs with deep neural networks has been rapidly developed in recent years and has achieved promising performance in solving real-world problems (e.g. \cite{hu2022higher}, \cite{li2022deep}, \cite{cai2021physics}, and \cite{misyris2020physics}).

A number of deep-learning-based PDE solving algorithms, for instance the deep Galerkin method (DGM;  \cite{sirignano2018dgm}) and physics-informed neural networks (PINNs) (\cite{raissi2019physics}), have been proposed. Inspired by the Q-learning method in reinforcement learning, we propose a new ``Q-PDE" algorithm which approximates the solution of PDEs with an artificial neural network. In this paper, we prove that for monotone PDEs the Q-PDE training process gives an approximation which converges to the solution of a certain limit equation as the number of hidden units in a single-layer neural network goes to infinity. Furthermore, we prove that the limit neural network converges strongly to the solution of the PDE that the algorithm is trying to solve.  

Q-learning (\cite{watkins1992q}) is a well-known algorithm for computing the value function of the optimal policy in reinforcement learning. Deep Q-learning (DQN) (\cite{mnih2013playing}), a neural-network-based Q-learning algorithm, has successfully learned to play Atari games (\cite{mnih2015human}) and subsequently has become widely-used for many applications (\cite{zhao2019deep}). The Q-learning algorithm updates its value function approximator following a biased gradient estimate computed from input data samples. We propose an algorithm which is similar to Q-learning in the sense that we update the parameters of a neural network via a biased gradient flow in continuous time. As far as the authors are aware, this is the first time that a Q-learning algorithm has been developed to solve PDEs directly. 

The universal approximation theorem (\cite{cybenko1989approximation}) indicates that the family of single-layer neural networks are powerful function approximators. However, the universal approximation theorem only states that there exists a neural network which can approximate continuous functions arbitrarily well; it does not suggest how to identify the parameters for such a neural network. When the number of units in our neural network becomes large, it is, however, possible to obtain asymptotic limits, for example the ``neural tangent kernel'' limit (\cite{jacot2018neural}), which gives a variant of the law of large numbers for infinitely wide neural networks. We will use this approach to study the performance of the biased gradient flow as a training algorithm, and show that the wide-network limit satisfies an infinite-dimensional ordinary differential equation (ODE).

We apply our Q-PDE approach to second order nonlinear PDEs with Dirichlet boundary conditions. In particular, we are able to give strong convergence results when the differential operator satisfies a strong monotonicity condition. Monotone PDEs arise in various applications, particularly in PDEs arising from stochastic modeling -- the generators of ergodic stochastic processes are monotone (when evaluated with their stationary distributions), which suggests a variety of possible applications of our approach. Further, the subdifferentials of convex functionals (i.e. of maps from functions to the real line) are  monotone; this suggests that monotone PDEs may be a particularly well-suited class of equations for gradient methods, as they correspond to (a generalization of) minimizations of convex functionals. We will see that, given this monotonicity assumption, we can prove that the limit Q-PDE algorithm will converge (strongly in $L^2$) to the solution of the monotone PDE. More generally, we can prove that any fixed point of the wide-network limit for the Q-PDE algorithm is a solution of the PDE (not necessarily under the monotone condition).

In the remainder of the introduction, we provide a brief survey of relevant literature and some related approaches. The necessary properties of the PDEs which we study, the architecture of the neural networks we use, and the Q-PDE training algorithm we propose are presented in Section \ref{sec_2}. Section \ref{sec_preliminary} derives useful properties of the limiting system, while Section \ref{sec_converge} contains a rigorous proof that the training process converges to this limit. Analysis of the limit neural network, in particular the proof of convergence to the solution of the PDE, is presented in Section \ref{sec_lim}. Numerical results are presented in Section \ref{sec_num}.

 \subsection{Summary}
 In summary, this paper will provide the following:
\begin{itemize}
    \item A new neural network training algorithm for second order PDEs (of the form given by Assumption \ref{PDEAssumptions}), based on Q-learning (Algorithm \ref{neuralQlearningAlgo}), and designed to ensure Dirichlet boundary conditions are enforced.
    \item Proofs of various functional-analytic properties of the neural tangent kernel limit of this algorithm (Section \ref{sec_preliminary}).
    \item A rigorous convergence proof of the neural-tangent kernel limit for a wide single-layer network (Section \ref{sec_converge}). This limit gives a simple deterministic dynamical system characterizing the neural network approximation during training (Definition \ref{widenetworklimit}).
    \item Proof that any fixed point of the limit neural network training dynamics is a solution of the PDE (Theorem \ref{fixedpointsolution}).
    \item A strong convergence proof for the limit neural network as the training time $t \rightarrow \infty$, when considering a PDE given by a monotone operator (Section \ref{sec_lim}).
    \item Numerical examples to demonstrate effectiveness of the training algorithm (Section \ref{sec_num}).
\end{itemize}
A more precise summary of our mathematical results is given in Section \ref{mainresultssummary}.

\subsection{Relevant literature} \label{sec_review}
The idea of solving PDEs with artificial neural networks has been rapidly developed in recent years. Various approaches have been proposed: \cite{lagaris1998artificial},  \cite{lagaris2000neural},  \cite{lee1990neural}, and \cite{malek2006numerical} applied neural networks to solve differential equations on an a priori fixed mesh. However, the curse of dimensionality shows that these approaches cannot be extended to high dimensional cases. In contrast, as a meshfree method, DGM (\cite{sirignano2018dgm}) randomly samples data points in each training step. Based on this random grid, it derives an unbiased estimator of the error of the approximation using the differential operator and the boundary condition of the PDE and then iteratively updates the neural network parameters with stochastic gradient descent. This approach has been successful at solving certain high-dimensional PDEs including free-boundary PDEs, Hamilton--Jacobi--Bellman equations, and Burger's equation. Unlike DGM, our algorithm is inspired by Q-learning, in the sense that we compute a biased gradient estimator as the search direction for parameter training. The biased gradient estimator does not require taking a derivative of the differential operators of the neural network; only the gradient of the neural network model itself is required. The simplicity of this gradient estimator, together with the monotonicity of the PDEs we consider, allows a proof of the convergence of the Q-PDE training algorithm to the solution of the PDE. 

Another related area of research is PINNs (\cite{raissi2019physics}), which train neural networks to merge observed data with PDEs. This enables its users to make use of a priori knowledge of the governing equations in physics. Analysis of solving second order elliptic/parabolic PDEs with PINNs can be found in \cite{shin2020convergence}. DeepONet (\cite{lu2021learning}) proposes solving a family of PDEs in parallel. Its architecture is split into two parallel networks: a branch net, which approximates functions related to input PDEs, and a trunk net that maps spatial coordinates to possible functions. Combining these two networks, it is possible to learn the solution of a PDE given enough training data from traditional solvers. Mathematical analysis of DeepONet in detail can be found in \cite{lanthaler2021error}. DeepSets (\cite{germain2021deepsets}) are designed to solve a class of PDEs which are invariant to permutations. It computes simultaneously an approximation of the solution and its gradient. For solving fully nonlinear PDE, \cite{pham2021neural} estimates the solution and its gradient simultaneously by backward time induction. For linear PDEs, MOD-Net (\cite{zhang2021mod}) uses a DNN to parameterize the Green’s function and approximate the solution.

Further approaches, which go beyond looking for an approximation of the strong solution of the PDE directly, have also been studied. \cite{zang2020weak} proposes using generative adversarial networks (GANs) to solve a min-max optimization problem to approximate the weak solution of the PDE. The Fourier neural operator (FNO) (\cite{li2020fourier}) approach makes use of a Fourier transform and learns to solve the PDE via convolution. The solution of certain PDEs can be expressed in terms of stochastic processes using the Feynman--Kac formula. \cite{grohs2020deep} apply neural networks to learn the solution of a PDE via its probabilistic representation and numerically demonstrates the approach for Poisson equations. Based on an analogy between the BSDE and reinforcement learning, \cite{han2017deep} proposes an algorithm to solve parabolic semilinear PDEs and the corresponding backward stochastic differential equations (BSDEs), where the loss function
gives the error between the prescribed terminal condition and the solution of the
BSDE. PDE-net and its variants (\cite{long2018pde}, \cite{long2019pde}) attempt to solve inverse PDE problems: a neural network is trained to infer the governing PDE given physical data from sensors. In a recent paper \cite{sirignano2021pde}, neural network terms are introduced to optimize PDE-constrained models. The parameters in the PDE are trained using gradient descent, where the gradient is evaluated by an adjoint PDE. 
In  \cite{ito2021neural}, a class of numerical schemes for solving semilinear Hamilton--Jacobi--Bellman--Isaacs (HJBI) boundary value problems is proposed. By policy iteration, the semilinear problem is reduced into a sequence of linear Dirichlet problems. They are subsequently approximated by a feedforward neural network ansatz.
For a detailed overview of deep learning for solving PDEs, we refer the reader to \cite{E2021}, \cite{beck2020overview} and \cite{germain2021neural}, and references therein.

Many of these works do not directly study the process of training the neural network. Gradient descent-based training of neural networks has been mathematically analyzed using the neural tangent kernel (NTK) approach (\cite{jacot2018neural}). The NTK analysis characterizes the evolution of the neural network during training in the regime when the number of hidden units is large. For instance, \cite{jacot2018neural}, \cite{lee2017deep}, and \cite{lee2019wide} study the NTK limit for classical regression problems where a neural network is trained to predict target data given an input. Using an NTK approach, \cite{wang2020and} gives a possible explanation as to why sometimes PINNs fail to solve certain PDEs. Their analysis highlights the difficulty in approximating highly oscillatory functions using neural networks, due to the lack of a spectral gap in the NTK. The NTK approach is not without criticism, particularly when applied to deeper neural networks, where it has been seen to be unable to accurately describe observed performance (see, for example, \cite{ghorbani} or \cite{ChizatOB19}). Alternative approaches include mean-field analysis (e.g. \cite{pmlr-v99-mei19a}).

In this article, we train a neural network to solve a PDE, using a different training algorithm, for which we give a rigorous proof of the convergence to a NTK limiting regime (as the width of the neural network increases). This NTK approach yields a particularly simple dynamic under our training algorithm, allowing us to prove convergence to the solution of the PDE (as training time increases), despite the poorly behaved spectral properties of the NTK, for those PDEs given by monotone operators. Our Q-PDE algorithm is substantially different to the gradient descent algorithm studied in \cite{jacot2018neural}, \cite{lee2017deep}, \cite{lee2019wide}, since a PDE operator will be directly applied to the neural network to evaluate the parameter updates. 

\section{The Q-PDE Algorithm} \label{sec_2}
In this section, we first of all state our assumptions on the PDE and its domain. Then we describe the neural network approximator and our Q-PDE algorithm.
\subsection{Assumptions on the PDE}
We consider a class of second-order nonlinear PDEs on a bounded open domain $\Omega \subset \mathbb{R}^n$ with Dirichlet boundary conditions:\begin{align}
\begin{cases} \label{pde}
    \mathcal{L}u= 0\quad \text{in} \,\,  \Omega\\
    \,\,\,\,u= f \quad \text{on}\,\, \partial \Omega.
\end{cases}
\end{align}
We assume a measure
\footnote{For example, $\mu$ can be taken to be the Lebesgue measure on $\Omega\subset\mathbb{R}^n$, and the analytic theory we consider is essentially the same as that when using the Lebesgue measure (i.e. the Sobolev norms using $\mu$ are equivalent to those using $\mathrm{Leb}$). We allow for more general $\mu$, as this will roughly correspond to a choice of sampling scheme in our numerical algorithm, and gives us a degree of flexibility when considering the monotonicity of $\mathcal{L}$.}
$\mu$ is given on $\Omega$, with continuous Radon--Nikodym density $d\mu/d\,\mathrm{Leb}$ bounded away from $0$ and $\infty$ (where $\mathrm{Leb}$ denotes Lebesgue measure). It follows that $\mu(\Omega)<\infty$ and  $\mu(A)>0$ for all open sets $A\subset \Omega$; all integrals on $\Omega$ will be taken with respect to this measure (unless otherwise indicated).  

We will study strong Sobolev solutions to the PDE \eqref{pde}; that is, we are interested in solutions $u\in \mathcal{H}^2$, where
\begin{equation}
    \mathcal{H}^p = \bigg\{f\in L^2(\Omega, \mu): \|f\|_{\mathcal{H}^p}:=\Big( \sum_{|\alpha|\leq p} \|D^\alpha f\|_{L^2} \Big)<\infty\bigg\},
\end{equation}
where $Du$ is the weak derivative of $u$ and where the boundary condition $u|_{\partial \Omega}=f$ is understood in the usual trace-operator sense in $\mathcal{H}^1$.

For notational simplicity, we write $\mathcal{H}^2_{(0)}=\mathcal{H}^2\cap \mathcal{H}^1_0$ for the subspace of functions $u\in\mathcal{H}^2$ such that $u|_{\partial\Omega} = 0$, that is, with boundary trace zero. This remains a Hilbert space under the $\mathcal{H}^2$ inner product. We write $\|\cdot\|$ (without a subscript) for the $L^2$ norm and similarly for the inner product. We refer to \cite{evans2010partial} or \cite{adams2003sobolev} for further details and general theory of these spaces and concepts.

The class of PDEs we will consider are those for which $\mathcal{L}$, $f$ and $\Omega$ satisfy some (strong) regularity conditions, which we now detail. We first of all present the assumptions and lemmas for the domain and the boundary condition.
\begin{assumption}\label{boundaryRegular}
The boundary $\partial \Omega$ is $C^{3, \alpha}$, for some $\alpha\in(0,1)$, that is, three times continuously differentiable, with $\alpha$-H\"older continuous third derivative.
\end{assumption}
\begin{assumption}[Auxiliary function $\eta$]\label{auxiliaryfunction}
There exists a (known) function $\eta \in C^3_b(\mathbb{R}^n)$, which satisfies $0< \eta < 1$ in $\Omega$, and $\eta=0$ on $\partial \Omega$. Furthermore, its first order derivative does not vanish at the boundary (that is, for  $x\in \partial \Omega$ and $\mathbf{n}_{x}$ an outward unit normal vector at $x$, we have  $\langle \nabla \eta(x), \mathbf{n}_{x}\rangle \neq 0$).
\end{assumption}

Using Assumptions \ref{boundaryRegular} and \ref{auxiliaryfunction}, we have the following useful result, which allows us to approximate functions in $\mathcal{H}^2_{(0)}$. Its proof is given in
Appendix \ref{proof_214}.
\begin{lemma} \label{lemma_eta}
\begin{enumerate}
    \item The set of functions $C^3(\overline\Omega)\cap C_0(\overline\Omega)$ is dense in $\mathcal{H}^2_{(0)} = \mathcal{H}^2\cap \mathcal{H}^1_0$ (under the $\mathcal{H}^2$ topology).
    \item For any function $u \in C^3(\overline\Omega)\cap C_0(\overline\Omega)$, the function $\tilde{u}=u/\eta$ is in $C^2_b(\Omega)\subset \mathcal{H}^2$.
\end{enumerate}
\end{lemma}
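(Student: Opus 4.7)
For (ii), I would proceed by local straightening of the boundary. Away from $\partial\Omega$, $\eta$ is bounded below on compact subsets of $\Omega$, so $\tilde u = u/\eta \in C^3$ there. Near each boundary point $x_0$, using the boundary regularity hypothesis (Assumption \ref{boundaryRegular}) I would pick a $C^3$ diffeomorphism straightening $\partial\Omega$ to $\{y_n=0\}$ in a neighborhood. In these coordinates both $u$ and $\eta$ vanish on $\{y_n=0\}$, so Hadamard's lemma gives
\[
u(y',y_n) = y_n\,U(y',y_n),\qquad \eta(y',y_n) = y_n\,E(y',y_n),
\]
with $U, E \in C^2$, and Assumption \ref{auxiliaryfunction} forces $E(y',0) = \partial_{y_n}\eta(y',0)\neq 0$. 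So locally $\tilde u = U/E$ is $C^2$, and a partition of unity based on compactness of $\partial\Omega$ yields $\tilde u \in C^2_b(\Omega)\hookrightarrow \mathcal{H}^2(\Omega)$ (the embedding using $\mu(\Omega)<\infty$).

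For (i) I would use a two-step approximation. Step 1: approximate $u \in \mathcal{H}^2_{(0)}$ by $v_\varepsilon\in C^\infty(\overline\Omega)$ in the $\mathcal{H}^2$ topology via extension followed by mollification, which is standard given Assumption \ref{boundaryRegular}. Step 2: correct the generically nonzero boundary values. The traces $g_\varepsilon := v_\varepsilon|_{\partial\Omega}$ converge to $0$ in $\mathcal{H}^{3/2}(\partial\Omega)$ by continuity of the trace operator, and I would subtract off a continuous lift $Lg_\varepsilon$. The lift $L:\mathcal{H}^{3/2}(\partial\Omega)\to\mathcal{H}^2(\Omega)$ is to be chosen so that (a) it is bounded and satisfies $(Lg)|_{\partial\Omega}=g$, and (b) it maps $C^3$ boundary data into $C^3(\overline\Omega)$. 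A concrete construction uses the tubular neighborhood of $\partial\Omega$: set $Lg(x)=\chi(d(x))\,\tilde g(p(x))$, where $d$ is the signed distance to $\partial\Omega$, $p$ is the nearest-point projection, $\tilde g$ is a $C^3$ extension of $g$ along the normal direction, and $\chi$ is a smooth cutoff with $\chi(0)=1$. Then $w_\varepsilon := v_\varepsilon - Lg_\varepsilon$ lies in $C^3(\overline\Omega)\cap C_0(\overline\Omega)$ and, by boundedness of $L$, converges to $u$ in $\mathcal{H}^2$.

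The main obstacle I anticipate is engineering the lift $L$ so that both the $\mathcal{H}^{3/2}\!\to\!\mathcal{H}^2$ continuity and the preservation of $C^3$ regularity up to $\overline\Omega$ hold for the same construction — each property in isolation is routine, but combining them relies on the tubular-neighborhood machinery and hence on the $C^3$ boundary regularity provided by Assumption \ref{boundaryRegular}. The remaining ingredients (Sobolev extension, mollification, trace continuity, and the partition-of-unity/compactness step for (ii)) are standard.
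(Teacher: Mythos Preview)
Your approach to part (ii) via boundary-straightening and Hadamard's lemma is correct and arguably cleaner than the paper's. The paper instead works directly in the original coordinates: it takes non-tangential paths $\rho(t)\to x\in\partial\Omega$ and applies L'H\^opital's rule repeatedly to $\tilde u_\rho = u_\rho/\eta_\rho$, using $|\eta_\rho'|\ge\epsilon^2$ to bound $\tilde u$, $\tilde u'$, $\tilde u''$ along such paths, then observes that non-tangential directions span $\mathbb{R}^n$. Your Hadamard factorisation $u=y_nU$, $\eta=y_nE$ with $E\ne 0$ achieves the same thing more transparently, at the cost of invoking the straightening diffeomorphism.

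For part (i) the paper takes a genuinely different route that bypasses your self-identified obstacle. Given $u\in\mathcal{H}^2_{(0)}$, it sets $f:=-\Delta u\in L^2$, approximates $f$ by $f_n\in C^\infty_0(\Omega)$, and solves the Dirichlet problem $-\Delta u_n=f_n$, $u_n|_{\partial\Omega}=0$. Schauder theory (using $\partial\Omega\in C^{3,\alpha}$ and $f_n\in C^{1,\alpha}$) gives $u_n\in C^{3,\alpha}(\overline\Omega)\cap C_0(\overline\Omega)$ directly, and the $\mathcal{H}^2$ elliptic estimate $\|u-u_n\|_{\mathcal{H}^2}\le C\|f-f_n\|_{L^2}$ gives convergence. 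This produces approximants that are \emph{automatically} zero on the boundary, so no lift is needed.

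Your mollify-then-correct strategy is viable in principle, but the specific lift you write down has a regularity mismatch: with $\partial\Omega\in C^{3,\alpha}$ the signed distance $d$ is $C^{3,\alpha}$ in a tubular neighbourhood, but the nearest-point projection $p(x)=x-d(x)\nabla d(x)$ is only $C^{2,\alpha}$, so $Lg=\chi(d)\,g\circ p$ lands in $C^{2,\alpha}(\overline\Omega)$ rather than $C^3(\overline\Omega)$. You would need either to upgrade the boundary regularity by one order, or to replace this lift by something else (e.g.\ the harmonic extension, which by Schauder is $C^{3,\alpha}$ for $C^{3,\alpha}$ data and by elliptic regularity is bounded $\mathcal{H}^{3/2}\to\mathcal{H}^2$ --- but at that point you are essentially reproducing the paper's argument). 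The paper's Poisson-problem approach is the cleaner fix.
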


In addition to these assumptions on $\mathcal{L}$, we make the following assumptions on the boundary value.
\begin{assumption}[Interpolation of the boundary condition function]\label{continuation}
There exists a (known) function $\bar f \in \mathcal{H}^2$ such that $\bar f|_{\partial\Omega}=f$ (or, more precisely, if $T:\mathcal{H}^1(\Omega)\to L^2(\partial\Omega)$ is the boundary trace operator, we have $T(\bar f)=f$). In the rest of this paper, for notational simplicity, we identify $f$ with its extension $\bar f$ defined on $\overline{\Omega}$.
\end{assumption}
\begin{remark}
The existence of some $\bar f$ satisfying Assumption \ref{continuation} is guaranteed, given a solution $u\in \mathcal{H}^2$ exists (as we could take $\bar f=u$). Practically, we assume not only that $\bar f$ exists, but that it is possible for us to use it as part of our numerical method to find $u$. In many cases, this is a mild and natural assumption, for instance when we have $f=0$, where we can simply take $\bar f=0$. 
\end{remark}
Now we present the assumptions of the PDE itself.
\begin{assumption}[Lipschitz continuity of $\mathcal{L}$] \label{assume_1}
There exists a constant $C$ such that for any $f_1, f_2 \in \mathcal{H}^2 \cap C^2$ and any $x \in \Omega$, the (nonlinear differential) operator $\mathcal{L}$ satisfies
\begin{align} \label{Lip}
    \begin{split}
    |\mathcal{L}f_1(x)-\mathcal{L}f_2(x)| &\leq C \bigg [ |f_1(x)-f_2(x)|+ \sum_{i=1}^n |\partial_{x_i}f_1(x)-\partial_{x_i}f_2(x)|\\&\quad + \frac{1}{2}\sum_{i,j =1}^n |\partial^2_{x_i x_j}f_1(x)-\partial^2_{x_i x_j}f_2(x)| \bigg ].
\end{split}
\end{align}
\end{assumption}
\begin{assumption}[Integrability of $\mathcal{L}$ at zero]\label{assume_LH2}
Taking $f_0\equiv 0$, we have $\mathcal{L}f_0 \in L^2$.
\end{assumption}
\begin{remark}
Under Assumptions \ref{assume_1} and \ref{assume_LH2}, $\mathcal{L}$ is a Lipschitz operator $\mathcal{L}:\mathcal{H}^2\to L^2$.
\end{remark}

Our algorithm will also make use of the following auxiliary function, which allows us to control behaviour of functions near the boundary of $\Omega$.

In order to prove convergence of our scheme for large training time,  we will particularly focus on PDEs given by strongly monotone operators (cf. \cite{browder1967existence}), which for convenience we take with the following sign convention.
\begin{assumption}[Strong $L^2$-monotonicity of $\mathcal{L}$] \label{Lmonotone}
There exists a constant $\gamma>0$ such that for any $f_1, f_2 \in \mathcal{H}^2_{(0)}$  the operator $\mathcal{L}$ satisfies 
\begin{align} \label{Monotone}
\langle f_1-f_2, \mathcal{L}f_1 - \mathcal{L}f_2\rangle \leq -\gamma \|f_1-f_2\|^2_{L^2},
\end{align}
where $\langle \cdot, \cdot\rangle$ is the $L^2(\mu)$ inner product. 
\end{assumption}

While these assumptions are somewhat restrictive, they are general enough to allow for the case $\mathcal{L}u = \mathcal{A}u - \gamma u + r$, for $r$ any $L^2$ function, where $\mathcal{A}$ is the generator of a sufficiently nice Feller process $X$ and $\gamma>0$, for an appropriate choice of sampling distribution $\mu$; see Appendix \ref{appMonotone} for further discussion.  In this case, the solution $u$ can be expressed (using the Feynman--Kac theorem) in terms of the stochastic process,
\begin{align}
    u(x) = \mathbb{E}\Big[e^{-\gamma \tau} f(X_\tau)+\int_0^\tau e^{-\gamma t}r(X_t)dt\Big|X_0=x\Big]
\end{align}
where $\tau = \inf\{t\ge 0: X_t\not\in \Omega\}$.

Further, these assumptions are also sufficiently general to allow some nonlinear PDEs of interest, for example Hamilton--Jacobi--Bellman equations under a Cordes condition, as discussed by  \cite{smears2014discontinuous}. The assumption of monotonicity is also connected to Lyapunov stability analysis, as  it is easy to check that strong monotonicity is equivalent to stating that  $v\mapsto \frac{1}{2}\|v\|_{L^2}^2$ is a Lyapunov function for the infinite-dimensional dynamical system  $dv/dt = \mathcal{L}v + \gamma v$. For more general discussion of monotone operators, and their connection to analysis of the traditional Galerkin method, we refer to \cite{zeidler2013nonlinear}.

Our final assumption on the PDE is that a solution exists.
\begin{assumption}[Existence of solutions]\label{assume_exist}
There exists a (unique) solution $u\in \mathcal{H}^2$ to \eqref{pde}.
\end{assumption}
\begin{remark}We note that, assuming strong monotonicity (Assumption \ref{Lmonotone}), if a solution exists then it is guaranteed to be unique: if $u,u'$ are two solutions, then $\mathcal{L}u = \mathcal{L}u'$, hence
$
        0=\langle u-u', \mathcal{L}u - \mathcal{L}u'\rangle\leq -\gamma \|u-u'\|^2$, 
    and so $u=u'$ almost everywhere.

    While existence of solutions in $\mathcal{H}^2$ is a strong assumption, it is often satisfied for weak solutions to elliptic equations, given the well-known elliptic regularity results which ensure solutions are sufficiently smooth.
    \end{remark}



\begin{remark}
The algorithm we present below can easily be extended to higher-order PDEs. To do this, further smoothness assumptions (on the activation function $\sigma$, continuation value $f$ and auxiliary function $\eta$) and higher moments of the initial weights $w^i$ in the neural network are needed, and one argument (Lemma \ref{lemma_eta}) needs to be extended using alternative approaches.  In this paper, for the sake of concreteness (and to avoid unduly long derivations), we present our results for PDEs up to second-order; the details of the extension to the general case are left as a tedious exercise for the reader.
\end{remark}

\begin{sa}\label{PDEAssumptions}
For ease of reference, we now summarize the assumptions we have made on our PDE:
\begin{enumerate}
    \item $\mathcal{L}$ is a nonlinear second-order differential operator, Lipschitz (as a map $\mathcal{H}^2(\Omega)\to L^2(\Omega)$), square integrable at zero (Assumptions \ref{assume_1}, \ref{assume_LH2}).
    \item The boundary value $f$ has a known continuation to a function in $\mathcal{H}^2(\Omega)$ (Assumption \ref{continuation}).
    \item $\Omega\subset\mathbb{R}^n$ is bounded and open, and  has a $C^{3,\alpha}$ boundary $\partial \Omega$,  which is the zero level set of a known auxiliary function $\eta\in C^3_b(\mathbb{R}^n)$, and $\eta$ has nonzero derivative on the boundary (Assumptions \ref{boundaryRegular}, \ref{auxiliaryfunction})
    \item $\mathcal{L}$ is strongly monotone, and there exists an $\mathcal{H}^2(\Omega)$ solution to the PDE (Assumptions  \ref{Lmonotone}, \ref{assume_exist}).
\end{enumerate}
We emphasise that only parts \textit{i}--\textit{iii} of this assumption will be used, except in Section \ref{sec_lim}, where the strong monotonicity and existence of solutions will also be needed.
\end{sa}

\subsection{Neural Network Configuration}
\label{sec_learning}
We will approximate the solution to the PDE \eqref{pde} with a neural network, which will be trained with a deep Q-learning inspired algorithm. In particular, we will study a single-layer neural network with $N$ hidden units:
\begin{align}
    S^N(x;\theta)=\frac{1}{N^\beta}\sum_{i=1}^N c^i\,\sigma(w^i \cdot x+b^i),
\end{align} where the scaling factor $\beta \in (\frac{1}{2},1)$ and $\sigma: \mathbb{R}\to \mathbb{R}$ is a non-linear scalar function. The parameters $\theta$ are defined as $\theta:=\{c^i, w^i, b^i\}_{i=1,...,N}$ where $c^i, b^i \in \mathbb{R}$, and $w^i \in \mathbb{R}^n$. The function $\eta$ can then be used to design a neural network model which automatically satisfies the boundary condition, introduced by \cite{mcfall2009artificial}:
\begin{align} \label{12}
    Q^N(x;\theta):=S^N(x;\theta) \eta(x) + f(x).
\end{align}

\begin{assumption}[Activation function]\label{assume_activation}
The activation function $\sigma\in C_b^4(\mathbb{R})$ is non-constant, where $C_b^k$ is the space of functions with $k$-th order continuous derivatives.
\end{assumption}
\begin{remark}
This assumption coincides with the condition of Theorem 4 of \cite{hornik1991approximation}, and guarantees that the functions $\sigma$ generate neural nets which are dense in the Sobolev space $\mathcal{H}^2$. (Hornik shows, under this condition, the stronger result of density in $\mathcal{H}^4$; we focus on $\mathcal{H}^2$, but require additional bounded derivatives as part of our proof.) We will particularly make use of this to establish Lemma \ref{dis_I}.
\end{remark}

Before training begins (i.e.~at $t = 0$), the neural network parameters are randomly initialized. The random initialization satisfies the assumptions described below. 

\begin{assumption}[Neural network initialization]\label{initialization}
The initialization of the parameters $\theta_0$, for all $i \in \{1,2,...,N\}$, satisfies: \begin{itemize}
    \item The parameters $c_0^i$, $w_0^i$, $b_0^i$ are independent random variables. 
    \item The random variables $c_0^i$ are bounded, $|c^i_0|<K_0$, and $\mathbb{E}[c^i_0]=0$.
    \item \label{cybenko}
The distribution of the random variables $w_0^i, b_0^i$ has full support. That is, for any open set $D \subseteq \mathbb{R}^{n+1}$, we have $\mathbb{P}((w_0^i,b_0^i)\in D)>0$.
\item For any indices $i,k\in\{1,...,N\}$, we have  $\mathbb{E}[|(w_0^i)_k|^3] < \infty$, $\mathbb{E}[|b_0^i|] < \infty$.
\end{itemize}
\end{assumption}

\subsection{Training Algorithm} \label{sec_algo}

We now present our algorithm for training the neural network $Q^N(x; \theta)$ to solve the PDE (\ref{pde}).  At training time $t$, the parameters of the neural network are denoted  $\theta_t=\{c_t^i, w_t^i, b_t^i\}_{i=1,...,N}$. For simplicity, we denote $S^N(\cdot;\theta_t)$ as $S_t^N$, and $Q^N(\cdot;\theta_t)$ as $Q_t^N$. Then, 

\begin{align}
    Q_t^N = S_t^N \eta + f.
\end{align}

Our goal is to design an algorithm to train the approximator $Q^N$ to find the solution of the PDE. One approach, see for instance \cite{sirignano2018dgm}, is to train the model $Q^N$ to minimize the average PDE residual:
\begin{align} \label{loss_untruncated}
    \int_{\Omega}[\mathcal{L}Q^N(x)]^2 d\mu(x).
\end{align} 
To improve integrability, we will smoothly truncate this objective function, and use the result to motivate our training algorithm. 

\begin{definition} [Smooth truncation function]
\label{psi}
The functions $\{\psi^N\}_{N\in \mathbb{N}}$ are a family of smooth truncation functions if, for some $\delta\in (0, \frac{1-\beta}{2})$, \begin{itemize}
    \item $\psi^N \in \mathbb{C}^2_b(\mathbb{R})$ is increasing on $\mathbb{R}$.
    \item $|\psi^N|$ is bounded by $2N^\delta$.
    \item $\psi^N(x)=x$ for $x \in [-N^\delta,N^\delta]$. \item $|(\psi^N)'| \leq 1$ on $\mathbb{R}$.
    \item $F^N:=(\psi^N) \cdot (\psi^{N})'$ is uniformly Lipschitz continuous on $\mathbb{R}$ for all $N \in \mathbb{N}$.
\end{itemize}
\end{definition}

For simplicity, we will usually describe the family $\psi^N$ as a (smooth) truncation function. A simple example of a truncation function is as follows: take $\delta = (1-\beta)/4>0$ and
\begin{align}
    \psi^N(x) = \int_0^x g^N(y)dy, \quad \text{where}\quad 
    g^N(x) = \begin{cases} 
    e^{-(|x|-N^\delta)^2}& \text{ if }|x|\geq N^\delta,\\
    1& \text{ if }|x|<N^\delta.
    \end{cases}
\end{align}

\begin{remark}\label{Ferrorbound}
It is easy to see that, for any smooth truncation function, the related function $F^N(x) = (\psi^N)\cdot(\psi^N)'$ satisfies
\begin{align}|F^N(x) - x|\leq 2|x|\mathbbm{1}_{\{|x|\geq N^{\delta}\}}\text{ for all } x\in \mathbb{R}.
\end{align}
\end{remark}

We will use a truncation function $\psi^N$ to modify (\ref{loss_untruncated}), and will consider minimizing the truncated objective:
\begin{align} \label{loss}
    \int_{\Omega}\big[\psi^N(\mathcal{L}Q^N(x))\big]^2 d\mu(x).
\end{align} 
To minimise a loss function like (\ref{loss}), one can apply gradient-descent based methods.

However, in practice, computing the gradient of $\mathcal{L}\big(Q_t^N(\cdot; \theta)\big)$ with respect to $\theta$ does not lead to an algorithm permitting simple analysis, as natural properties of the differential operator $\mathcal{L}$ are not directly preserved. Instead, we introduce a biased gradient estimator as follows:
\begin{definition}
The sequence of functions
\begin{align} \label{G}
    G^N(\theta_t)=\int_{\Omega} \bigg [ \psi^N(\mathcal{L}Q^N_t(x))\cdot({\psi^N}')(\mathcal{L}Q^N_t(x))\bigg ]\nabla_\theta (-Q_t^N(x))d\mu(x)
\end{align}
are called the \emph{biased gradient estimators} for the truncated loss. These can be approximated using Monte-Carlo sampling, as
\begin{align} \label{G2}
    \hat G^N_M(\theta_t)=\frac{1}{M}\sum_{i=1}^M \bigg [ \psi^N(\mathcal{L}Q^N_t(x_i))\cdot({\psi^N}')(\mathcal{L}Q^N_t(x_i))\bigg ]\nabla_\theta (-Q_t^N(x_i))
\end{align}
where $x_i$ are independent samples from the distribution $\mu$.
\end{definition}
Our analysis will focus on the biased gradient estimator $G^N$, however in our numerical implementation (in order to avoid the curse of dimensionality) we will use the Monte-Carlo estimates $\hat G^N_M$ for large $M$.

We shall see that this biased gradient is a continuous-space, PDE analog of the classic Q-learning algorithm. In summary, our ``Q-PDE" algorithm for solving PDEs is: 
\begin{myalgorithm}[Q-PDE Algorithm]\label{neuralQlearningAlgo}
We fix a family of smooth truncation functions $\{\psi^N\}_{N\in\mathbb{N}}$ and, for each value of $N\in \mathbb{N}$, we proceed as follows:
\begin{enumerate}
\item Randomly initialize the parameters $\theta_0$, as specified in Assumption \ref{initialization}. 
\item Train the neural network via the \emph{biased gradient flow}
\begin{align}
    \frac{d \theta_t}{dt}=-\alpha^N_t G^N(\theta_t). \label{Q-Learning}
\end{align}
with $G^N$ as in \eqref{G} and learning rate 
\begin{equation}\label{alphadef}\alpha^N_t=\alpha^N=\alpha N^{2\beta-1},
\end{equation}
where $\alpha$ is a positive constant. 
\end{enumerate}
The approximate solution to the PDE at training time $t$ is $Q_t^N$, as defined by \eqref{12}.
\end{myalgorithm}

In the remainder of this paper, we will study this Q-PDE algorithm for solving PDEs both mathematically and numerically.  First, we prove that the neural network model $Q^N_t$ converges to the solution of an infinite-dimensional ODE as the number of hidden units $N \rightarrow \infty$. Then, we study the limit ODE, and prove that it converges to the true solution of the PDE as the training time $t \rightarrow \infty$. Finally, in Section \ref{sec_num}, we demonstrate numerically that the algorithm performs well in practice with a finite number of hidden units. For high dimensional PDEs, it is impossible to form mesh-grids to apply conventional finite difference methods. However, our method is mesh-free. The crucial feature for implementing our algorithm in practice is to evaluate the integral term $G^N$. As discussed, we can do this using the Monte-Carlo estimate $\hat G^N_M$; therefore our algorithm has the potential to solve high-dimensional PDEs.

\begin{remark}
It is worth observing that, given this choice of biased gradient flow, there is generally no guarantee that our algorithm corresponds to stochastic gradient descent applied to any potential function. The key advantage of the use of this biased gradient is that it separates the derivatives in $\theta$ from the differential operator $\mathcal{L}$, which allows the underlying properties of the PDE to be preserved more simply. We will see that this results in particularly simple dynamics in the wide-network limit.
\end{remark}

\subsubsection{Main results}\label{mainresultssummary}
Our main mathematical results are the following:
\begin{theorem}[cf. Theorem \ref{converge_1}]
Define the kernel
\begin{align}\label{Bdefinition}
\begin{split}
    B(x,y)&=\eta(x)\eta(y)\mathbb{E}_{c,w,b}\Big[\sigma(w \cdot x+b)\sigma(w \cdot y+b)+c^2 \sigma'(w \cdot x+b)\sigma'(w \cdot y+b)(x \cdot y+1)\Big],
    \end{split}
\end{align}
where the expectation is taken with respect to the distribution of the random initialization of $c$, $w$, and $b$ satisfying Assumption \ref{initialization}, and corresponding integral operator $(\mathcal{B}v)(y)=\int_\Omega B(x,y)v(x)d\mu(x)$.
For a single-hidden layer neural network with $N$ units, as $N\to \infty$ the PDE approximator $Q^N$ trained using the Q-PDE algorithm converges (in $\mathcal{H}^2$, for each time $t$) to the deterministic $\mathcal{H}^2$-valued dynamical system
$    {dQ_t}/{dt} = \alpha \mathcal{B}\mathcal{L}(Q_t)$, with  $ Q_0 = f$,
and $Q_t|_{\partial\Omega} \equiv f$ for all $t$.
\end{theorem}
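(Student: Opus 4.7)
The plan is to exploit the special structure of the biased gradient: the dynamics of $Q_t^N$ (rather than those of $\theta_t$) satisfy a closed integral equation driven by an explicit empirical kernel. Applying the chain rule to $Q_t^N = S_t^N\eta + (1-\eta)f$ and substituting \eqref{Q-Learning} into $\tfrac{d}{dt}Q_t^N(x) = \nabla_\theta Q_t^N(x)\cdot \dot\theta_t$ gives
\[
\frac{dQ_t^N}{dt}(x) \;=\; \alpha \int_\Omega K_t^N(x,y)\,F^N\!\big(\mathcal{L}Q_t^N(y)\big)\,d\mu(y),
\]
where
\[
K_t^N(x,y) = \eta(x)\eta(y)\,\frac{1}{N}\sum_{i=1}^N\!\Big[\sigma(w_t^i\!\cdot\! x + b_t^i)\sigma(w_t^i\!\cdot\! y + b_t^i) + (c_t^i)^2\sigma'(w_t^i\!\cdot\! x + b_t^i)\sigma'(w_t^i\!\cdot\! y + b_t^i)(x\!\cdot\! y+1)\Big].
\]
The factor $1/N$ comes from combining the $N^{2\beta-1}$ in $\alpha_t^N$ with the two $N^{-\beta}$'s inside $\nabla_\theta Q_t^N(x)\cdot\nabla_\theta Q_t^N(y)$. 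The candidate limit is obtained by replacing $K_t^N\to B$ and $F^N(z)\to z$.

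The argument then has two ingredients. First I would prove a parameter-drift bound: on any fixed horizon $[0,T]$, using $|F^N|\le 2N^\delta$ (from $|\psi^N|\le 2N^\delta$ and $|(\psi^N)'|\le 1$) and the coordinate-wise bound $|\nabla_\theta Q_t^N| = O(N^{-\beta})$ from the architecture, a direct estimate of $\dot\theta_t$ yields
\[
\sup_{t\in[0,T]}\sup_i \|(c_t^i,w_t^i,b_t^i)-(c_0^i,w_0^i,b_0^i)\| \;=\; O\!\big(N^{\beta+\delta-1}\big),
\]
which is $o(1)$ since $\delta<(1-\beta)/2<1-\beta$. Second, at $t=0$ a standard law of large numbers (justified by Assumption \ref{initialization} and the boundedness of $\sigma,\sigma',\sigma''$ from $\sigma\in C_b^4$) gives $K_0^N\to B$, and the drift bound propagates this to $\sup_{t\in[0,T]}\|K_t^N-B\|\to 0$ in probability. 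A Gr\"onwall argument on $\|Q_t^N-Q_t\|_{\mathcal{H}^2}$ then closes the loop, splitting the error into (i) the kernel-approximation term $\int(K_t^N-B)F^N(\mathcal{L}Q_t)\,d\mu$, (ii) the truncation term $\int B(F^N(\mathcal{L}Q_t)-\mathcal{L}Q_t)\,d\mu$, which vanishes by Remark \ref{Ferrorbound} combined with $\mathcal{L}Q_t\in L^2$, and (iii) a feedback term controlled by the Lipschitz constant of $\mathcal{L}:\mathcal{H}^2\to L^2$ in Assumption \ref{assume_1}.

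The main obstacle, I expect, is not the Gr\"onwall step but obtaining convergence of $K_t^N$ to $B$ in a topology strong enough that $\mathcal{L}$ may be applied under the integral in the $y$-variable and that differences can be measured in $\mathcal{H}^2$ in the $x$-variable. Concretely, one must LLN-approximate not just $K_t^N$ but its mixed partial derivatives in $x$ (and in $y$) up to second order, uniformly on $[0,T]$. This forces: a uniform-in-$N$ a priori bound on $\|Q_t^N\|_{\mathcal{H}^2}$ (so the feedback does not blow up the parameter drift), third-moment control on the initial $w_0^i$ as in Assumption \ref{initialization}, and enough smoothness of $\sigma$, $\eta$, and $f$ to allow the chain rule to propagate through $\mathcal{L}$ into the network, which is exactly what Assumptions \ref{assume_activation}, \ref{auxiliaryfunction}, and \ref{continuation} are designed to provide. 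The constraint $\beta>1/2$ is essential in the drift bound, as it is what ensures $K_t^N$ does not rotate away from its initial value during training.
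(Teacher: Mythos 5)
Your overall strategy — rewrite the training dynamics at the level of $Q_t^N$ as an integral equation driven by the empirical NTK $B^N_t$, bound the parameter drift by $O(N^{\beta+\delta-1})$ via the $|F^N|\le 2N^\delta$ truncation bound and the $N^{\beta-1}$ scaling of $\alpha^N_t\nabla_\theta Q^N$, identify the limit kernel $B$ by the law of large numbers at $t=0$, propagate this forward using the drift bound, and close with a Gr\"onwall estimate in $\mathcal{H}^2$ — is exactly the route of the paper (Lemmas \ref{bounded_t}, \ref{ata0_lemma_1}--\ref{l2h}, \ref{lemma_ini}, and the proof of Theorem \ref{converge_1}). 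Your three-way splitting of $K^N_tF^N(\mathcal{L}Q^N_t)-B\mathcal{L}Q_t$ into a kernel-error term, a truncation term, and a feedback term is the same structural device used in \eqref{d1}--\eqref{d3}.

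Several of your annotations are, however, off. First, no uniform-in-$N$ a priori bound on $\|Q_t^N\|_{\mathcal{H}^2}$ is needed or used; this is precisely what the truncation $\psi^N$ buys you. The drift estimates (\eqref{dct}, \eqref{dwt}, \eqref{dbt}) are proved using only $|F^N|\le 2N^\delta$ and the boundedness of $\sigma$, $\eta$, $c_t^i$, with no reference to $\mathcal{L}Q^N_t$, so the loop you worry about never opens. Second, $\beta>\tfrac12$ is not ``essential in the drift bound'': the drift is $O(N^{\delta+\beta-1})$ and $\delta+\beta-1<0$ follows automatically from $\delta<(1-\beta)/2$, i.e.\ from $\beta<1$. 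The constraint $\beta>\tfrac12$ is instead what makes the \emph{initial} approximator converge, since $\mathbb{E}\|Q_0^N-Q_0\|^2_{\mathcal{H}^2}=O(N^{1-2\beta})$ (Lemma \ref{lemma_ini}). Third, you do not need LLN control of the $x$-derivatives of the kernel: in the Gr\"onwall step the $\mathcal{H}^2$ norm is taken in $y$, so only $\partial_{y_k}$ and $\partial^2_{y_ky_l}$ of $B^N_t-B$ appear; the $x$-variable is integrated against $\mathcal{L}Q$, which is handled in $L^2(\mu)$. Finally, a minor but real complication: your decomposition places $K_t^N$ (rather than $B$) in the feedback term, but the $y$-derivatives of $K_t^N$ are not uniformly bounded (they involve $w_t^i$, which are only $L^1$-controlled); the paper's decomposition in \eqref{d1} keeps the deterministic $B$, whose derivatives are uniformly bounded by Lemma \ref{M}, in the feedback term and puts $F^N(\mathcal{L}Q^N_u)$ (bounded by $2N^\delta$) against the kernel difference, which is why the $N^\delta$ weight appears in Lemma \ref{l2h}. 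Either grouping can be made to work, but yours would require an extra moment argument to control the kernel derivatives.
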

\begin{theorem} The limiting dynamical system $Q_t$ has the following convergence properties:
\begin{itemize}
    \item If $Q_t$ converges in $\mathcal{H}^1$ to a fixed point $Q$, then $Q$ is the solution to the PDE \eqref{pde}. (Theorem \ref{fixedpointsolution})
    \item If $\mathcal{L}$ is a monotone operator, and $u$ is the solution to the PDE \eqref{pde}, then for almost every sequence $t_k\to \infty$, we have $Q_{t_k}\to u$ in $L^2$.  (Theorem \ref{thm:Qconvergence})
    \item If $\mathcal{L}$ is a Gateaux differentiable monotone operator then, for almost every sequence $t_k\to \infty$, we have $\mathcal{B}\mathcal{L}Q_{t_k}\to 0$.  (Theorem \ref{thm:diffLconvergence})
\end{itemize}
\end{theorem}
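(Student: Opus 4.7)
The theorem packages three separate claims about the limit ODE $\dot{Q}_t = \alpha \mathcal{B} \mathcal{L}(Q_t)$. I would tackle fixed points first (the easiest), then the Gateaux-differentiable case (which admits a clean Lyapunov identity), and finally the purely monotone case, which I expect to be the main obstacle because of the lack of a spectral gap in $\mathcal{B}$.

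\emph{Fixed points.} Suppose $Q_t \to Q$ in $\mathcal{H}^1$, so $\mathcal{B}\mathcal{L}(Q) = 0$ and $Q|_{\partial\Omega} = f$ by continuity of the trace on $\mathcal{H}^1$. I need $\ker \mathcal{B} = \{0\}$ on $L^2$: setting $v := \mathcal{L}(Q)$ and using that $B$ is a sum of positive-semidefinite kernels,
\[
0 = \langle v, \mathcal{B} v\rangle \geq \mathbb{E}_{w,b}\!\Big[\Big(\int_\Omega \eta(x) \sigma(w \cdot x + b) v(x)\, d\mu\Big)^{\!2}\Big],
\]
so for a.e.\ $(w,b)$ this integral vanishes. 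Continuity in $(w,b)$ combined with the full-support hypothesis in Assumption \ref{initialization} promotes the vanishing to every $(w,b) \in \mathbb{R}^{n+1}$, and the Hornik density property underlying Assumption \ref{assume_activation} then forces $\eta v = 0$ in $L^2$. Since $\eta > 0$ on $\Omega$, $v \equiv 0$, so $\mathcal{L}(Q) = 0$.

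\emph{Gateaux-differentiable monotone case.} Writing $w_t := \mathcal{B}\mathcal{L}Q_t = \dot{Q}_t/\alpha$, I would use the non-negative Lyapunov functional
\[
V(t) := \langle \mathcal{L} Q_t, \mathcal{B} \mathcal{L} Q_t\rangle = \langle \mathcal{L} Q_t, w_t\rangle \geq 0,
\]
whose time derivative is $\dot{V} = 2\langle w_t, \mathcal{L}'(Q_t)\,\dot{Q}_t\rangle = 2\alpha \langle w_t, \mathcal{L}'(Q_t) w_t\rangle$. A difference-quotient argument converts the monotonicity inequality of Assumption \ref{Lmonotone} into coercivity $\langle v, \mathcal{L}'(Q_t) v\rangle \leq -\gamma \|v\|^2$ for all $v$, giving $\dot{V} \leq -2\alpha \gamma \|w_t\|^2$. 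Integrating with $V(T) \geq 0$ yields $\int_0^\infty \|w_t\|^2 dt \leq V(0)/(2\alpha\gamma) < \infty$, so $\|\mathcal{B} \mathcal{L} Q_{t_k}\| \to 0$ along almost every sequence $t_k \to \infty$. The technical checks are absolute continuity of $t \mapsto \mathcal{L} Q_t$ in $L^2$ (using Lipschitzness of $\mathcal{L} : \mathcal{H}^2 \to L^2$ with the $\mathcal{H}^2$-smoothness of $t \mapsto Q_t$ from Section \ref{sec_preliminary}) and finiteness of $V(0)$.

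\emph{Purely monotone case.} This is where I expect the hard work. Without a Gateaux derivative the clean Lyapunov above is unavailable, and the natural replacement is the pseudo-inverse quadratic
\[V(t) := \tfrac{1}{2} \langle e_t, \mathcal{B}^{-1} e_t\rangle, \qquad e_t := Q_t - u,\]
which formally satisfies $\dot{V} = \alpha \langle e_t, \mathcal{L} Q_t - \mathcal{L} u\rangle \leq -\alpha \gamma \|e_t\|^2$, hence $\int_0^\infty \|e_t\|^2 dt < \infty$. To make sense of this without a spectral gap I would diagonalise the compact self-adjoint operator $\mathcal{B}$ (injective on $L^2$ by the fixed-point argument above) as $\mathcal{B} = \sum_k \lambda_k \phi_k \langle \phi_k, \cdot\rangle$ with $\lambda_k \downarrow 0$, and expand $e_t = \sum_k e_t^k \phi_k$, $\mathcal{L}Q_t = \sum_k \ell_t^k \phi_k$; the componentwise ODE $\dot{e}_t^k = \alpha \lambda_k \ell_t^k$ combined with the monotonicity inequality $\sum_k e_t^k \ell_t^k \leq -\gamma \|e_t\|^2$ then gives $\tfrac{d}{dt} \sum_k (e_t^k)^2/\lambda_k \leq -2\alpha\gamma \|e_t\|^2$. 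The principal obstacle is controlling $V(0)$: since $\lambda_k \downarrow 0$, finiteness of $\sum_k (e_0^k)^2/\lambda_k$ requires $e_0 = (1-\eta)f - u \in \mathrm{range}(\mathcal{B}^{1/2})$. I would exploit $\mathrm{range}(\mathcal{B}) \subset \eta \cdot L^2$ together with Lemma \ref{lemma_eta} and Assumption \ref{continuation} to write $e_0$ in the form $\eta \tilde{u}$ with $\tilde{u}$ sufficiently regular to verify the summability; failing that, the fallback is to project onto $\mathrm{span}\{\phi_1, \dots, \phi_K\}$ where the restricted $\mathcal{B}$ has gap $\lambda_K > 0$, run the argument quantitatively there, and let $K \to \infty$ after securing a separate a-priori $L^2$-bound on $\sup_t \|e_t\|$ via Gronwall applied to the Lipschitz-$\mathcal{L}$ dynamics. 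Once $\int_0^\infty \|e_t\|^2 dt < \infty$ is established, $\|e_{t_k}\| \to 0$ for almost every $t_k \to \infty$ is immediate.
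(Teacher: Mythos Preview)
Your fixed-point and Gateaux arguments match the paper almost exactly: for the former the paper simply invokes Lemma \ref{lem:BLip}(i) (strict positivity of $\mathcal{B}$) for injectivity, and for the latter your Lyapunov $V(t)=\langle\mathcal{L}Q_t,\mathcal{B}\mathcal{L}Q_t\rangle$ is the paper's $\|v_t\|^2_{\mathcal{B}}$ with $v_t=\mathcal{L}Q_t$ verbatim.

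In the purely monotone case your $\mathcal{B}^{-1}$-Lyapunov is the right formal object, and you correctly identify the obstruction ($e_0$ need not lie in $\mathrm{range}(\mathcal{B}^{1/2})$), but neither repair you propose closes the gap. Writing $e_0=\eta\tilde u$ via Lemma \ref{lemma_eta} does not by itself place $e_0$ in $\mathrm{range}(\mathcal{B}^{1/2})$: the paper only knows $\mathrm{im}(\mathcal{B})$ is \emph{dense} in $\mathcal{H}^2_{(0)}$ (Theorem \ref{thm:densityofimB}), not that it or its square-root range contains any particular element. The spectral-projection fallback has two further problems: the eigenexpansion of $\mathcal{B}$ is in $L^2$, so $(I-P_K)e_0\to 0$ only in $L^2$, whereas the Lipschitz bound on $\mathcal{L}$ needs $\mathcal{H}^2$-smallness of the remainder; and the Gronwall bound on $\sup_t\|e_t\|$ you invoke grows exponentially in $t$, so it cannot be sent to infinity.

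The paper's device is to allow a slack: write $Q_0=u+\mathcal{B}v_0+g$ for \emph{any} $v_0\in L^2_\eta$ and residual $g\in\mathcal{H}^2$, and run the auxiliary ODE $\dot v_t=\mathcal{L}(\mathcal{B}v_t+u+g)$ in $L^2_\eta$, so that $Q_t=\mathcal{B}v_t+u+g$ for all $t$ by uniqueness. The finite Lyapunov is $\|v_t\|^2_{\mathcal{B}}$; comparing $\mathcal{L}$ at $\mathcal{B}v_t+u+g$, at $u+g$, and at $u$ (monotonicity for the first step, $\mathcal{H}^2$-Lipschitzness for the second) gives $\frac{d}{dt}\|v_t\|^2_{\mathcal{B}}\le-\gamma\|\mathcal{B}v_t\|^2+C\|g\|^2_{\mathcal{H}^2}$, and after a Gronwall step one obtains
\[
\int_0^t\|Q_s-u\|^2\,ds\;\le\;Ct\,\|g\|^2_{\mathcal{H}^2}+C\,\|v_0\|^2_{\mathcal{B}}.
\]
Now Theorem \ref{thm:densityofimB} is exactly what lets you choose $v_0=v_0^{(\epsilon)}$ with $\|g\|_{\mathcal{H}^2}\le\epsilon$; dividing by $t$ and sending $t\to\infty$ then $\epsilon\to 0$ gives $\frac{1}{t}\int_0^t\|Q_s-u\|^2\,ds\to 0$. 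This is only Ces\`aro convergence---weaker than the $\int_0^\infty\|e_t\|^2\,dt<\infty$ you were aiming for---and correspondingly the exceptional set in the paper has density zero rather than finite Lebesgue measure. The slack term $g$ is precisely what replaces your unavailable inversion $e_0=\mathcal{B}v_0$, at the cost of a linear-in-$t$ error that washes out in the time average.
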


\subsubsection{Similarity to Q-learning}
Our training algorithm's biased gradient flow is inspired by the classic Q-learning algorithm in a reinforcement learning setting. The Q-learning algorithm approximates the value function of the optimal policy for Markov decision problems (MDPs). It seeks to minimize the error of a parametric approximator, such as a neural network, $R(\cdot,\cdot;\theta)$:
\begin{align}
    L(\theta)= \sum_{(s,a)\in \mathcal{S}\times \mathcal{X}}[Y(s,a; \theta)-R(s,a;\theta)]^2 \pi(s,a).
\end{align}
Here $\mathcal{S}$ and $\mathcal{X}$ are the finite state and action spaces of the MDP and $\pi$ is a probability mass function which is strictly positive for every $(s,a)\in \mathcal{S}\times \mathcal{X}$. $Y$ is the target Bellman function 
\begin{align}
    Y(s,a; \theta)=r(s,a)+\gamma \sum_{s' \in \mathcal{S}} \max_{a' \in \mathcal{X}}R(s',a';\theta)p(s'|s,a).
\end{align}
Q-learning updates the parametric approximator $R$ via $\theta_{k+1}=\theta_k + \alpha_k^N U_k$ 
where $U$ is a biased estimator of the gradient of $(Y-R)^2$. Since the transition density $p(s'|s,a)$ is unknown, $Y-R$ is estimated using random samples from the Markov chain $(s_k, a_k)$: 
\begin{align}
    \Tilde{Y}(s_k,a_k)-R(s_k,a_k;\theta_k):= r(s_k,a_k)+ \gamma \max_{a' \in \mathcal{X}}R(s_{k+1,a'};\theta_k)-R(s_k,a_k;\theta_k).
    \label{StochasticSampleQLearning}
\end{align}
The Q-learning algorithm treats $Y$ (and $\tilde Y$) as a constant and takes a derivative only with respect to the last term in (\ref{StochasticSampleQLearning}). The Q-learning update direction $U_k$ 
is:
\begin{align}
 -U_k  =: \Big(\Tilde{Y}(s_k,a_k)-R(s_k,a_k;\theta_k)\Big)\nabla_{\theta_k} [-R(s_k,a_k;\theta_k)]
\end{align}
We emphasize that the term $\nabla_\theta \Tilde{Y}$ is not included, which means $U_k$ is a biased estimator for the direction of steepest descent.

Returning to the objective function for the Q-PDE algorithm, the gradient of the loss function \eqref{loss} is 
\begin{align}
    \int_{\Omega} \Big [ \psi^N(\mathcal{L}Q^N_t(x)){\psi^N}'(\mathcal{L}Q^N_t(x))\Big ]\nabla_\theta \mathcal{L}Q_t^N(x)d\mu(x). \label{true_gradient}
\end{align}
The update direction $G^N$ in \eqref{G} differs from \eqref{true_gradient}, as $\nabla_\theta \mathcal{L}Q_t^N(x)$ has been replaced with $\nabla_\theta (-Q_t^N(x))$. This is, conceptually, the same update direction as used in the Q-learning algorithm.

\section{Preliminary analysis of the training algorithm}\label{sec_preliminary}

We begin our analysis of the training algorithm (\ref{Q-Learning}) by proving several useful bounds, in particular, that the neural network parameters $c_t$ and $w_t$ are bounded in expectation. The proof is provided in Appendix \ref{proof_2}. We recall that $n$ is the dimension of our underlying space, that is, $\Omega\subset \mathbb{R}^n$.
\begin{lemma}[Boundedness of parameters] \label{bounded_t}
For all $T>0$, there exists a deterministic constant $C>0$ such that, for all $N \in \mathbb{N}$, all $0 \leq t \leq T$, all $i \in \{1,2,...N\}$, and all $k \in \{1,2,...n\}$, 
\begin{equation}
    |c_t^i|< C, \qquad \mathbb{E}|(w_t^i)_k|< C, \qquad\text{and}\qquad  \mathbb{E}|b_t^i| < C.
\end{equation}

\end{lemma}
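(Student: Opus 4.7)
The plan is to exploit the fact that the Q-PDE dynamics is driven by the bounded quantity $F^N(\mathcal{L}Q^N_t(x)) = \psi^N(\mathcal{L}Q^N_t(x)) \cdot (\psi^N)'(\mathcal{L}Q^N_t(x))$, which by the very design of the truncation satisfies $|F^N(\cdot)| \leq 2N^\delta$ pointwise on $\mathbb{R}$. Combined with the fact that $\delta < (1-\beta)/2$ and the specific scaling $\alpha_t^N = \alpha N^{2\beta-1}$ together with the $N^{-\beta}$ prefactor inside $\nabla_\theta Q_t^N$, this will yield deterministic, $T$-dependent bounds on each $|dc_t^i/dt|$ and $|d(w_t^i)_k/dt|$ that are $O(N^{\beta+\delta-1})$, hence uniformly bounded (in fact vanishing) in $N$. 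The computation then reduces to integrating in $t$ and applying the moment hypotheses on $\theta_0$.

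First I would write out the partial derivatives of $Q_t^N$. Using $Q_t^N = S_t^N \eta + (1-\eta) f$, only $S_t^N$ depends on $\theta$, so
\begin{align*}
\partial_{c^i} Q_t^N(x) &= N^{-\beta} \eta(x)\, \sigma(w_t^i \cdot x + b_t^i),\\
\partial_{(w^i)_k} Q_t^N(x) &= N^{-\beta} c_t^i\, \eta(x)\, \sigma'(w_t^i \cdot x + b_t^i)\, x_k,\\
\partial_{b^i} Q_t^N(x) &= N^{-\beta} c_t^i\, \eta(x)\, \sigma'(w_t^i \cdot x + b_t^i).
\end{align*}
By Assumption \ref{assume_activation}, $\|\sigma\|_\infty$ and $\|\sigma'\|_\infty$ are finite; by Assumption \ref{auxiliaryfunction}, $\|\eta\|_\infty \leq 1$; and since $\Omega$ is bounded, $\sup_{x\in\Omega}\|x\|_\infty < \infty$ and $\mu(\Omega)<\infty$.

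Next, I would bound $|dc_t^i/dt|$. Substituting into \eqref{G}, using $|F^N| \leq 2N^\delta$ and the bound on $\partial_{c^i} Q_t^N$, one obtains
\begin{equation*}
\left|\frac{dc_t^i}{dt}\right| \leq \alpha N^{2\beta-1} \cdot 2N^\delta \cdot N^{-\beta} \cdot \|\eta\|_\infty \|\sigma\|_\infty \mu(\Omega) = C_1 N^{\beta+\delta-1}.
\end{equation*}
Since $\delta < (1-\beta)/2 < 1-\beta$, the exponent $\beta + \delta - 1$ is strictly negative, so $N^{\beta+\delta-1} \leq 1$ for $N \geq 1$. This bound is pathwise, so integration gives $|c_t^i| \leq |c_0^i| + C_1 T \leq K_0 + C_1 T$ for $t\in[0,T]$, which is the deterministic bound claimed.

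Finally, I would run the analogous argument for the $w$ and $b$ parameters, now using the newly established deterministic bound on $|c_t^i|$. The factor of $c_t^i$ appearing in $\partial_{(w^i)_k} Q_t^N$ and $\partial_{b^i} Q_t^N$ is absorbed into a new constant $C_2(T)$ depending on $T$ (through $K_0 + C_1 T$), yielding pathwise bounds
\begin{equation*}
\left|\frac{d(w_t^i)_k}{dt}\right| \leq C_2(T) N^{\beta+\delta-1} \leq C_2(T), \qquad \left|\frac{db_t^i}{dt}\right| \leq C_3(T),
\end{equation*}
so that $|(w_t^i)_k| \leq |(w_0^i)_k| + C_2(T) T$ and $|b_t^i| \leq |b_0^i| + C_3(T)T$ pathwise. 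Taking expectations and invoking the moment hypotheses $\mathbb{E}|(w_0^i)_k| < \infty$ and $\mathbb{E}|b_0^i| < \infty$ in Assumption \ref{initialization} completes the proof. There is no substantive obstacle; the only delicate step is the bookkeeping of the three powers $N^{2\beta-1}$, $N^{\delta}$, and $N^{-\beta}$, and observing that the constraint $\delta \in (0,(1-\beta)/2)$ was introduced precisely so that their product stays bounded.
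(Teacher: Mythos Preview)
Your proposal is correct and follows essentially the same approach as the paper's proof: compute the partial derivatives of $Q_t^N$ with respect to each parameter, use $|F^N|\leq 2N^\delta$ together with the scaling $\alpha_t^N N^{-\beta}=\alpha N^{\beta-1}$ to obtain pathwise $O(N^{\beta+\delta-1})$ bounds on the time derivatives, integrate in $t$ to bound $|c_t^i|$ deterministically first, then feed that into the $w$ and $b$ estimates before taking expectations. The bookkeeping and order of steps match the paper's argument exactly.
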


\subsubsection{Dynamics of $Q_t^N$ }
We next consider the evolution of the output of the neural network $Q_t^N$ as a function of the training time $t$. Define $B_t^N$ as the symmetric, positive semi-definite kernel function
\begin{align}
    B^N_t(x,y)=\eta(x)\eta(y)A_t^N(x,y),
\end{align}
where $A_t^N$ is the \emph{neural tangent kernel} (\cite{jacot2018neural})
\begin{align}\begin{split}
     A^N_t(x,y)&=\Big(\nabla_\theta S^N_t(x)\Big)^\top \Big( \nabla_\theta S^N_t(y)\Big)\\
     &=\frac{1}{N}\sum_{i=1}^N\Big[\sigma(w_t^i\cdot  x +b_t^i)\sigma(w_t^i \cdot y +b_t^i)\\&\qquad +{c_t^i}^2\sigma'(w_t^i \cdot x +b_t^i)\sigma'(w_t^i \cdot y +b_t^i)(x\cdot y+1)\Big].
\end{split}
\end{align}

For any $y \in \overline{\Omega}$, we can derive the evolution of value $Q_t^N(y)$ using the chain rule:
\begin{align} \label{21}
    \frac{d Q_t^N(y)}{dt}= \nabla_{\theta} Q_t^N(y) \cdot \frac{d \theta_t}{dt}.
\end{align} 
The RHS of (\ref{21}) can be evaluated using (\ref{12}), (\ref{Q-Learning}), and Fubini's theorem, which yields
\begin{align} \label{37}
\begin{split}
    \frac{dQ^N_t(y)}{dt}
    &=\alpha \int_\Omega F^N\Big(\mathcal{L}Q_t^N(x)\Big)B^N_t(x,y)d\mu(x),
\end{split}
\end{align}
where $F^N$ is as in Definition \ref{psi}. 

\subsubsection{Limit kernel}
Equation \eqref{37} shows that the kernel $B^N$ has a key role in the dynamics of $Q^N$. We now characterize the limit of $B^N$ as $N\to \infty$. In Section \ref{sec31}, we will study this convergence in detail.

At time $t=0$, the parameters $c_0^i, w_0^i, b_0^i$ are independently sampled. Therefore, for each $(x,y)\in \Omega$, by the strong law of large numbers  $A_0^N(x,y)$ converges almost surely to $A(x,y)$, where \begin{align}
    A(x,y)=\mathbb{E}_{c,w,b}\big[\sigma(w \cdot x +b)\sigma(w \cdot y +b)+c^2\sigma'(w \cdot x +b)\sigma'(w \cdot y +b)(x \cdot y +1)\big].
\end{align} It follows that $B_0^N(x,y)$ converges almost surely to $B(x,y)$:
\begin{align}\label{eq:Bformdefn}
    \lim_{N \to \infty} B_0^N(x,y)= B(x,y):=\eta(x)\eta(y)A(x,y).
\end{align}
We can write \begin{align}
    B(x,y)=\eta(x)\eta(y)\mathbb{E}_{c,w,b}[\sigma(w\cdot x + b)\sigma(w\cdot y + b)+ c^2 U(x)\cdot U(y)],
\end{align}
where the ($w,b$-dependent) vector function $U$ is given by $U(x)= \sigma'(w \cdot x +b)\big(x+1/\sqrt{n}\big)$. As $\sigma\in C^4_b(\mathbb{R})$, we know that $A$, $A^N$, $B$ and $B^N$ are all uniformly continuous in $(x,y)$, so the above a.s.\ convergences hold for all $(x,y)\in \Omega$ simultaneously.

\begin{lemma} \label{M}
There exists a constant $M>0$ such that, for any $(x,y) \in \Bar{\Omega} \times \Bar{\Omega}$,
\begin{align}
    |B(x,y)|+\sum_k |\partial_{y_k} B(x,y)|+ \sum_{k,l} |\partial^2_{y_k,y_l} B(x,y)| \leq M.
\end{align}
\end{lemma}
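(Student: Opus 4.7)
The plan is to show that each of the three sums of terms is bounded by expanding derivatives via the Leibniz rule and then differentiating under the expectation in the definition of $A(x,y)$. Since $B(x,y) = \eta(x)\eta(y)A(x,y)$ and $\eta \in C^3_b(\mathbb{R}^n)$ is a fixed bounded function (with bounded derivatives up to order three), bounds on derivatives of $B$ in $y$ reduce to bounds on the mixed derivatives of $A(x,y)$ in $y$, uniformly in $x \in \bar\Omega$.

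For the $0$th-order bound on $B$, I would simply note that $\sigma \in C^4_b$ implies $|\sigma|$ and $|\sigma'|$ are bounded, $|c_0^i|<K_0$ is uniformly bounded by Assumption \ref{initialization}, and $|x\cdot y + 1|$ is bounded on the compact set $\bar\Omega \times \bar\Omega$. Thus the integrand defining $A(x,y)$ is uniformly bounded (independent of the random $w,b$), giving $\sup_{x,y}|A(x,y)|<\infty$ and hence $\sup |B| < \infty$.

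For the first- and second-order bounds, I would justify differentiating under the expectation using dominated convergence, then apply the chain rule. A typical first derivative produces terms like $\sigma(w\cdot x+b)\sigma'(w\cdot y+b)w_k$ and $c^2\sigma'(w\cdot x+b)\sigma''(w\cdot y+b)w_k(x\cdot y+1) + c^2\sigma'(w\cdot x+b)\sigma'(w\cdot y+b)x_k$. Each is dominated (uniformly in $x,y \in \bar\Omega$) by a bound of the form $\mathrm{const}\cdot(|w_k|+1)$. Since Assumption \ref{initialization} gives $\mathbb{E}|(w_0^i)_k|^3 < \infty$ and hence $\mathbb{E}|w_k|<\infty$, dominated convergence applies and the first derivatives are bounded. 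A second differentiation produces at worst terms proportional to $|w_k w_l|$, $|w_k|\cdot|x_l|$, or constants; applying Cauchy--Schwarz with $\mathbb{E}|w_k|^2 \le (\mathbb{E}|w_k|^3)^{2/3} < \infty$ shows $\mathbb{E}|w_k w_l|<\infty$, and we get a uniform bound here as well. Since $\sigma \in C^4_b$, all derivatives of $\sigma$ up to order three (which is the highest we need when passing two derivatives through $\sigma'$) are bounded, so no growth in $w$ enters through $\sigma^{(j)}$.

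To conclude, I would combine these bounds on $A(x,y), \partial_{y_k}A, \partial^2_{y_k y_l}A$ with the Leibniz expansion of $\partial^{|\alpha|}_y[\eta(y)A(x,y)]$ for $|\alpha|\le 2$, using the $C^3_b$ bounds on $\eta$. The factor $\eta(x)$ is just a bounded multiplier. Summing the finitely many terms of each derivative gives a finite constant $M$, independent of $(x,y)\in\bar\Omega\times\bar\Omega$. The main (minor) technical obstacle is verifying the integrability conditions for differentiating under the expectation; this is precisely the reason Assumption \ref{initialization} demands a third moment on $(w_0^i)_k$ rather than merely a first moment, and is why $\sigma$ is required in $C^4_b$ (we only need $C^3_b$ here, but the extra derivative will be used elsewhere in the paper).
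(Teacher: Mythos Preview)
Your proposal is correct and follows essentially the same approach as the paper: expand $B=\eta(x)\eta(y)A(x,y)$ via the product rule, differentiate under the expectation in $A$, and bound the resulting terms using $\eta\in C^3_b$, $\sigma\in C^4_b$, the bound $|c_0^i|<K_0$, and compactness of $\bar\Omega$. If anything, you are slightly more explicit than the paper in invoking dominated convergence and the moment bounds $\mathbb{E}|(w_0^i)_k|^3<\infty$ to control the $w_k$ and $w_kw_l$ factors that arise after one and two $y$-derivatives; the paper simply writes out $\partial_{y_k}B$ and asserts boundedness from ``our assumptions on $\eta$, $\sigma$ and $\{c_t^i\}$''.
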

\begin{proof}
   Taking $B$ as defined in \eqref{eq:Bformdefn}, as $\eta\in C^3_b$, $\sigma \in C^4_b$ and each term of $\{c_t^i\}_{i\in \mathbb{N}}$ is uniformly bounded, we have $|B(x,y)|<C$. The partial derivative of $B$ is
\begin{align}
    \begin{split}
        &\partial_{y_k}B(x,y)\\
        &=\eta(x)\big(\partial_{y_k}\eta(y)\big)A(x,y)+\eta(x)\eta(y)\big(\partial_{y_k}A(x,y)\big)\\
        &=\eta(x)\partial_{y_k}\eta(y)A(x,y)+\eta(x)\eta(y)\mathbb{E}_{c,w,b}\bigg [ \sigma(w \cdot x +b)\sigma'(w \cdot y +b) w_k \\
        &\quad + c^2 \sigma'(w \cdot x +b) [w_k \sigma''(w \cdot y+b )x \cdot y+\sigma'(w \cdot y+b) x_k]\bigg].
    \end{split}
\end{align} 
As before, our assumptions on $\eta$, $\sigma$ and $\{c_t^i\}_{i\in \mathbb{N}}$ guarantee that the terms in $\partial_{y_k}B(x,y)$ are uniformly bounded for all $x,y \in \Bar{\Omega}$, so $|\partial_{y_k}B(x,y)|<C$; similarly for $\partial^2_{y_k y_l}B(x,y)$. The result follows.
\end{proof}
Given we have a kernel $B$, it is natural to define the corresponding integral operator, $v\mapsto \int_\Omega B(x,y) v(x) d\mu(x)$. It will be convenient to define this on a slightly larger space than $L^2$, to account for the effect of the function $\eta$.
\begin{definition}
We define the function space $L^2_\eta:= \{ f: \Omega \to \mathbb{R}|\, \eta f \in L^2\}$, and observe (as $\eta$ is bounded) that $L^2\subset L^2_\eta$. 
\end{definition}
\begin{definition}
The operator $\mathcal{B}:L^{2}_\eta \to L^2$ is defined by $
    (\mathcal{B}v)(y)=\int_\Omega B(x,y)v(x)d\mu(x)$
with $B$ as in \eqref{eq:Bformdefn}.
\end{definition}
\begin{lemma}
For any $v\in L^{2}_\eta$, we have $\mathcal{B}v \in \mathcal{H}^2_{(0)}$, or more specifically, $\mathcal{B}v \in C^2_b(\overline\Omega)$ and $(\mathcal{B}v)(y)=0$ for all $y\in \partial \Omega$. 
\end{lemma}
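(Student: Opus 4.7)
The plan is to reduce the claim to two applications of dominated convergence, after exploiting the factorized structure $B(x,y) = \eta(x)\eta(y)A(x,y)$ from \eqref{eq:Bformdefn}.

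First I would rewrite
\[
(\mathcal{B}v)(y) = \eta(y)\int_\Omega A(x,y)\,[\eta(x) v(x)]\,d\mu(x),
\]
so that, setting $\tilde v := \eta v$, the hypothesis $v \in L^2_\eta$ gives $\tilde v \in L^2(\Omega,\mu) \subset L^1(\Omega,\mu)$ (using $\mu(\Omega) < \infty$). The boundary condition $(\mathcal{B}v)|_{\partial\Omega} = 0$ is then immediate: for $y \in \partial\Omega$, Assumption \ref{auxiliaryfunction} forces $\eta(y) = 0$, so the prefactor kills the entire integral.

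Second, to establish $\mathcal{B}v \in C^2_b(\overline\Omega)$, I would differentiate under the integral. The hypothesis $\sigma \in C^4_b$ together with the moment bounds on $(w_0^i, b_0^i)$ from Assumption \ref{initialization} imply, by exactly the calculation already performed in the proof of Lemma \ref{M}, that for every multi-index $\alpha$ with $|\alpha|\leq 2$ the partial derivative $\partial^\alpha_y B(x,y)$ exists, is jointly continuous in $(x,y)$, and satisfies a uniform bound $|\partial^\alpha_y B(x,y)| \leq M'$ on $\overline\Omega \times \overline\Omega$. The key point is that each such derivative retains the factor $\eta(x)$ (since we differentiate only in the $y$ variable); writing $\partial^\alpha_y B(x,y) = \eta(x)\,\partial^\alpha_y[\eta(y) A(x,y)]$ with the second factor uniformly bounded yields the domination
\[
|\partial^\alpha_y B(x,y)\,v(x)| \;\leq\; C\,|\tilde v(x)| \;\in\; L^1(\Omega,\mu).
\]
This is the crucial step, and is the reason the statement is phrased for $L^2_\eta$ rather than $L^2$. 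Dominated convergence then justifies moving each $\partial^\alpha_y$ inside the integral, and a second application of dominated convergence (in $y$, using continuity of $\partial^\alpha_y B$) shows that each $\partial^\alpha_y(\mathcal{B}v)$ is continuous in $y$. Uniform boundedness in $y$ is then a one-line Cauchy--Schwarz estimate: $|\partial^\alpha_y(\mathcal{B}v)(y)| \leq C \mu(\Omega)^{1/2} \|\tilde v\|_{L^2}$.

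Finally, membership in $\mathcal{H}^2_{(0)} = \mathcal{H}^2\cap \mathcal{H}^1_0$ follows automatically from these two facts: $\mathcal{B}v \in C^2_b(\overline\Omega)$ with $\mu(\Omega)<\infty$ yields $\mathcal{B}v \in \mathcal{H}^2(\Omega)$, and the pointwise boundary vanishing, combined with continuity up to $\partial\Omega$ (which is $C^{3,\alpha}$ by Assumption \ref{boundaryRegular}), places $\mathcal{B}v \in \mathcal{H}^1_0(\Omega)$. I do not anticipate any serious obstacle; the only real subtlety is noticing that the $\eta(x)$ factor inside $B$ allows the dominating function to be expressed in terms of $\tilde v = \eta v \in L^1$, rather than in terms of $v$ itself, which need not be integrable.
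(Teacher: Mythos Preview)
Your proposal is correct and follows essentially the same route as the paper: factor $B(x,y)=\eta(x)\eta(y)A(x,y)$, read off the boundary vanishing from the prefactor $\eta(y)$, invoke the uniform bounds of Lemma~\ref{M} (together with the integrability of $\eta v$) to get $\mathcal{B}v\in C^2_b(\overline\Omega)$, and then pass to $\mathcal{H}^2_{(0)}$ via the trace operator. The only difference is that you spell out the dominated convergence and Cauchy--Schwarz steps explicitly, whereas the paper compresses these into a single appeal to Lemma~\ref{M} and the smoothness of $\sigma$ and $\eta$.
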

\begin{proof}
From the definition of $\mathcal{B}v$, we have $
    (\mathcal{B}v)(y)=\eta(y)\int_\Omega A(x,y)\eta(x)v(x)d\mu(x)$. Lemma \ref{M} and the smoothness of $\sigma$ and $\eta$ clearly imply that $\mathcal{B}v\in C^2_b(\overline\Omega)\subset \mathcal{H}^2$. We know that $\eta(x)=0$ for $x\in \partial \Omega$, so it is clear that $(\mathcal{B}v)(y)=0$ for all $y\in \partial \Omega$. As $\mathcal{H}^1_0$ is the kernel (in $\mathcal{H}^1$) of the boundary trace operator, and for smooth functions the trace operator is simply evaluation of the function on the boundary, it follows that $\mathcal{B}v\in \mathcal{H}^1_0$. We conclude that $\mathcal{B}v\in \mathcal{H}^2_{(0)} = \mathcal{H}^2\cap \mathcal{H}^1_0$.
\end{proof}
\begin{lemma}\label{lem:Blip1}
The linear map $\mathcal{B}:L^2_\eta \to \mathcal{H}^2$ is Lipschitz, in particular, there exists $C>0$ such that, for any $v\in L^2_\eta$, we have $\|\mathcal{B}v\|_{\mathcal{H}^2}\leq C\|\eta\cdot v\|_{L^2}$.
\end{lemma}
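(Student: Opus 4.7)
The plan is to show $\|\mathcal{B}v\|_{\mathcal{H}^2}^2 = \sum_{|\alpha|\leq 2}\|\partial^\alpha_y (\mathcal{B}v)\|_{L^2}^2$ can be controlled by $\|\eta v\|_{L^2}^2$ by pulling the $y$-derivatives under the integral sign and applying Cauchy--Schwarz, while crucially keeping the $\eta(x)$ factor paired with $v(x)$ rather than with the kernel.

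First I would rewrite the kernel in the factored form
\begin{equation}
    B(x,y) = \eta(x)\,K(x,y), \qquad K(x,y):=\eta(y) A(x,y),
\end{equation}
so that $(\mathcal{B}v)(y) = \int_\Omega K(x,y)\,\eta(x)v(x)\,d\mu(x)$. The proof of Lemma \ref{M} already shows that, after the $\eta(x)$ factor is explicitly pulled out, each of the expressions for $\partial^\alpha_y B(x,y)$ has the form $\eta(x)\cdot[\text{uniformly bounded function of }(x,y)]$; exactly the same estimates (using $\eta\in C^3_b$, $\sigma\in C^4_b$, and the boundedness of $|c_0^i|$ together with the moment bound on $w_0^i$) give a constant $M'$ with $|\partial^\alpha_y K(x,y)|\leq M'$ for all $(x,y)\in\overline\Omega\times\overline\Omega$ and all multi-indices $|\alpha|\leq 2$.

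Next I would justify differentiation under the integral sign: since the integrand and all its $y$-derivatives up to order $2$ are uniformly bounded (by $M'|\eta(x)v(x)|$, which is $\mu$-integrable because $\mu(\Omega)<\infty$ and $\eta v\in L^2$), dominated convergence yields $\partial^\alpha_y(\mathcal{B}v)(y) = \int_\Omega \partial^\alpha_y K(x,y)\,\eta(x)v(x)\,d\mu(x)$ for $|\alpha|\leq 2$. Applying Cauchy--Schwarz in $x$ gives
\begin{equation}
    |\partial^\alpha_y(\mathcal{B}v)(y)|^2 \leq \Big(\int_\Omega |\partial^\alpha_y K(x,y)|^2\,d\mu(x)\Big)\,\|\eta v\|_{L^2}^2 \leq (M')^2\mu(\Omega)\,\|\eta v\|_{L^2}^2.
\end{equation}
Integrating in $y$ over $\Omega$ and summing over $|\alpha|\leq 2$ produces a constant $C$ depending only on $M'$ and $\mu(\Omega)$ such that $\|\mathcal{B}v\|_{\mathcal{H}^2}\leq C\|\eta v\|_{L^2}$, and Lipschitz continuity follows from linearity.

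There is no real obstacle here; the only subtle point is the choice of norm on the domain, and it is dictated by the structure of $B$. If one naively bounded using $\|v\|_{L^2}$ one would need $v\in L^2$, but since $B(x,y)$ carries a factor $\eta(x)$ that vanishes on $\partial\Omega$, the integrand depends on $v$ only through the product $\eta v$, which is precisely what allows the operator to extend to the larger space $L^2_\eta$. Everything else is a routine application of bounds already established in Lemma \ref{M} and its proof.
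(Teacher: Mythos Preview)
Your proposal is correct and follows essentially the same route as the paper: factor $B(x,y)=\eta(x)\,[\eta(y)A(x,y)]$, use the uniform bounds on the $y$-derivatives of $\eta(y)A(x,y)$ (from the same estimates underlying Lemma~\ref{M}) to differentiate under the integral, and then control $|D^\alpha(\mathcal{B}v)(y)|$ by $\|\eta v\|_{L^2}$ before integrating in $y$. The only cosmetic difference is that the paper bounds $\int_\Omega|\eta v|\,d\mu$ first and then applies Jensen's inequality, whereas you apply Cauchy--Schwarz directly; the two are equivalent here and yield the same constant up to a factor of $\mu(\Omega)$.
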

\begin{proof}
By definition, with $D^\alpha$ denoting the differential operator with respect to the $y$ argument, $
        \|\mathcal{B}v\|_{\mathcal{H}^2}^2 = \sum_{|\alpha|\leq 2}\|D^ \alpha \mathcal{B}v\|_{L^2}^2$.
    By the boundedness of $\eta$, $A$ and their derivatives, there exists a constant $K$ such that for any multi-index $\alpha$, $|D^\alpha [\eta(y)A(x,y)]|<K$  for any $x,y\in \Omega$. Hence, exchanging the order of integration (with respect to $x$) and differentiation (with respect to $y$) by the dominated convergence theorem,
    \begin{align}
    \begin{split}
        \big|\big(D^\alpha\mathcal{B}v\big)(y)\big|&=\bigg | D^\alpha \bigg [\int_\Omega \eta(y)A(x,y)\eta(x)v(x)d\mu(x)\bigg ] \bigg |=\bigg |\int_\Omega D^\alpha [\eta(y)A(x,y)]\eta(x)v(x)d\mu(x)\bigg |\\
        &\leq \int_\Omega \bigg |D^\alpha [\eta(y)A(x,y)]\eta(x)v(x)\bigg|d\mu(x)\leq K \int_\Omega |\eta(x)v(x)|d\mu(x).
    \end{split}
    \end{align}
   By Jensen's inequality, there is a constant $C>0$ such that
    \begin{equation}
    \begin{split}
         \big|\big(D^\alpha\mathcal{B}v\big)(y)\big|^2 &\leq K^2\Big(\int_\Omega |\eta(x)v(x)| d\mu(x)\Big)^2 \leq C\int_\Omega |\eta(x)v(x)|^2 d\mu(x) = C\| \eta \cdot  v \|_{L^2}^2.
         \end{split}
    \end{equation}
    Integrating over $\Omega$, we see that $
            \|D^ \alpha \mathcal{B}v\|_{L^2}^2\leq \big(C\mu(\Omega)\big)\| \eta \cdot  v \|_{L^2}^2$. Summing over indices $\alpha$ yields the result.
\end{proof}
Our next challenge is to prove that the image of $\mathcal{B}$ is dense in $\mathcal{H}^2_{(0)}$. This will allow us to represent the solutions to our PDE in terms of $\mathcal{B}$, and is closely related to the universal approximation theorems of \cite{cybenko1989approximation}, \cite{hornik1991approximation}, and others. 

\begin{lemma} \label{dis_I}
For a function $g \in \mathcal{H}^2(\Omega)$, if the equality \begin{align} \label{dis_RMK_3}
   \int_\Omega \sum_{|\alpha| \leq 2 } D^\alpha \sigma(w\cdot x+b) D^\alpha g(x)d\mu(x) =0
\end{align}
holds for all $w \in \mathbb{R}^n$, $b \in \mathbb{R}$, then  $g\equiv 0$.
\end{lemma}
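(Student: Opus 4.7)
The plan is to recognise the given integral as an inner product and then invoke Hornik's universal approximation theorem. Define
\begin{equation}
    \langle f, h\rangle_{\mathcal{H}^2} := \sum_{|\alpha|\leq 2} \int_\Omega D^\alpha f(x)\, D^\alpha h(x)\, d\mu(x),
\end{equation}
which is the natural inner product on $\mathcal{H}^2(\Omega,\mu)$ generating its norm. The hypothesis then states precisely that $\langle \sigma(w\cdot +b), g\rangle_{\mathcal{H}^2} = 0$ for every $w\in\mathbb{R}^n$ and $b\in\mathbb{R}$; equivalently, $g$ is $\mathcal{H}^2$-orthogonal to every function in the linear span
\begin{equation}
    \mathcal{N} := \mathrm{span}\{\sigma(w\cdot x + b) : w\in\mathbb{R}^n, \, b\in\mathbb{R}\}.
\end{equation}

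The key analytical input is Hornik's Theorem 4 \cite{hornik1991approximation}, which we may invoke under Assumption \ref{assume_activation} ($\sigma\in C^4_b(\mathbb{R})$ and non-constant): the space $\mathcal{N}$ is dense in the Sobolev space $\mathcal{H}^4(\Omega,\mathrm{Leb})$, and hence \emph{a fortiori} in $\mathcal{H}^2(\Omega,\mathrm{Leb})$, with respect to the Lebesgue-measure Sobolev norm. Before this transfers to our setting, we must reconcile the choice of reference measure: since the Radon--Nikodym density $d\mu/d\mathrm{Leb}$ is bounded above and below by positive constants on $\Omega$, the norms $\|\cdot\|_{\mathcal{H}^2(\mu)}$ and $\|\cdot\|_{\mathcal{H}^2(\mathrm{Leb})}$ are equivalent. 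Density in one norm therefore implies density in the other, so $\mathcal{N}$ is also dense in $\mathcal{H}^2(\Omega,\mu)$.

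To conclude, observe that the functional $h\mapsto \langle h, g\rangle_{\mathcal{H}^2}$ is a bounded linear functional on $\mathcal{H}^2(\Omega,\mu)$ which vanishes on $\mathcal{N}$; by continuity and density it vanishes on all of $\mathcal{H}^2(\Omega,\mu)$. Applying this with $h=g$ yields $\|g\|_{\mathcal{H}^2}^2 = 0$, so $g=0$ as an element of $\mathcal{H}^2$, i.e.\ $g\equiv 0$ (a.e.) on $\Omega$.

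The only real subtlety is the measure-change step linking Hornik's Lebesgue-based density statement to $\mathcal{H}^2(\Omega,\mu)$; this is resolved cleanly by the bounded density assumption on $\mu$. Everything else reduces to the standard fact that a vector orthogonal to a dense subset of a Hilbert space must be zero.
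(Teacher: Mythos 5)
Your proof follows the same route as the paper's: recognise the hypothesis as $\mathcal{H}^2$-orthogonality to $\{\sigma(w\cdot x + b)\}$, invoke Hornik's Theorem 4 for density of this span in $\mathcal{H}^2$, and conclude by the standard Hilbert-space orthogonality argument. The extra remark reconciling the Lebesgue-based statement in Hornik with the $\mu$-based Sobolev norm via the bounded density assumption is a welcome (and correct) clarification that the paper leaves implicit.
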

\begin{proof}
Since $\mu$ is a finite measure and $\sigma\in C^2_b$ (cf. Assumption \ref{assume_activation}) and non-constant, from Theorem 4 in \cite{hornik1991approximation} we have that the linear span of $\{\sigma(w\cdot x+b)\}_{w,b\in \mathbb{R}}$ is dense in $\mathcal{H}^2$. Hence, there is no nontrivial element of $\mathcal{H}^2$ orthogonal to $\sigma(w\cdot x+b)$ for all $w, b$. In other words, for $g \in \mathcal{H}^2$, if the inner product $\langle \sigma_{w,b}, g \rangle_{\mathcal{H}^2}=0$ for all $w,b$, then $g=0$, which is the stated result.
\end{proof}

\begin{lemma} \label{dis_III}
Define 
$S: \mathcal{H}^2 \to \mathcal{H}^2$ by $(Sh)(y):= \int_{\Omega} \sum_{|\alpha| \leq 2} D_x^\alpha h(x) D_x^\alpha A(x,y)d\mu(x)$.
Then $S h = 0$ if and only if $h=0$.
\end{lemma}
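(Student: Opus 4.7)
The plan is to establish the nontrivial direction $Sh = 0 \Rightarrow h = 0$; the converse is immediate from the definition. The key idea is to write $A$ as an expectation of a symmetric positive-semidefinite rank-one sum, so that $\langle h, Sh\rangle_{\mathcal{H}^2}$ collapses to a sum of squares to which Lemma \ref{dis_I} can be applied.

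First I would rewrite the kernel: using $x\cdot y + 1 = \sum_{k=1}^n x_k y_k + 1$, introduce the feature functions
\begin{align*}
\Phi_0(x;w,b) &:= \sigma(w\cdot x+b), \\
\Phi_k(x;w,b,c) &:= c\,x_k\,\sigma'(w\cdot x+b) \quad (k=1,\ldots,n),\\
\Phi_{n+1}(x;w,b,c) &:= c\,\sigma'(w\cdot x+b),
\end{align*}
so that $A(x,y) = \mathbb{E}_{c,w,b}\bigl[\sum_{j=0}^{n+1}\Phi_j(x;w,b,c)\,\Phi_j(y;w,b,c)\bigr]$. Each realisation of $\Phi_j$ lies in $C^2_b(\overline\Omega)\subset \mathcal{H}^2$; Assumptions \ref{assume_activation} (giving $\sigma\in C^4_b$) and \ref{initialization} (bounded $c$, finite moments of $w,b$) supply the integrability needed below.

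Next I would pair $Sh$ against $h$ in $\mathcal{H}^2$. Differentiating $A$ under the expectation and applying Fubini (justified by the $C^4_b$ regularity of $\sigma$ and the moment bounds, which yield integrable majorants for each $D_x^\alpha D_y^\beta A(x,y)$), one obtains
\begin{align*}
\langle h, Sh\rangle_{\mathcal{H}^2} &= \sum_{|\alpha|,|\beta|\leq 2} \int_\Omega\!\!\int_\Omega D^\beta h(y)\, D^\alpha h(x)\, D_x^\alpha D_y^\beta A(x,y)\, d\mu(x)\,d\mu(y) \\
&= \mathbb{E}_{c,w,b}\Big[\sum_{j=0}^{n+1}\bigl\langle h,\Phi_j\bigr\rangle_{\mathcal{H}^2}^2\Big] \;\ge\; 0.
\end{align*}
If $Sh = 0$, this nonnegative quantity vanishes, so for $(c,w,b)$-almost every realisation $\omega$ we have $\langle h, \Phi_j(\cdot;\omega)\rangle_{\mathcal{H}^2} = 0$ for each $j$.

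Finally, I would focus on the $j=0$ term: $\Phi_0(\cdot;w,b) = \sigma(w\cdot + b)$ is independent of $c$, so $\langle h, \sigma(w\cdot + b)\rangle_{\mathcal{H}^2} = 0$ for almost every $(w,b)$ under the initialization law. The map $(w,b)\mapsto \langle h, \sigma(w\cdot + b)\rangle_{\mathcal{H}^2}$ is continuous on $\mathbb{R}^{n+1}$ (since $\sigma\in C^4_b$ and $\Omega$ is bounded), and by Assumption \ref{initialization} the law of $(w_0^i,b_0^i)$ has full support on $\mathbb{R}^{n+1}$, so the equality extends to every $(w,b)\in\mathbb{R}^{n+1}$. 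Lemma \ref{dis_I} then gives $h\equiv 0$. The main obstacle in executing this plan is verifying the differentiation-under-expectation and Fubini manipulations that produce the sum-of-squares representation; given the $C^4_b$-bounds on $\sigma$ and the moment assumptions on the initialization, this is tedious but routine.
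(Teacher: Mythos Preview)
Your proposal is correct and follows essentially the same strategy as the paper: compute $\langle h, Sh\rangle_{\mathcal{H}^2}$, recognize it as an expectation of a sum of squares via the rank-one structure of the kernel $A$, drop all but the $\sigma(w\cdot x+b)$ contribution, and then invoke Lemma~\ref{dis_I} together with continuity in $(w,b)$ and the full-support assumption. The only cosmetic differences are that the paper keeps the $c^2\sigma'\sigma'(x\cdot y+1)$ part as a single vector-valued square $\|c\int D^\alpha h\,D^\alpha U\|^2$ rather than splitting into your scalar features $\Phi_1,\ldots,\Phi_{n+1}$, and it phrases the final step as the contrapositive ($h\neq 0\Rightarrow \langle Sh,h\rangle_{\mathcal{H}^2}>0$) rather than deducing $\langle h,\sigma(w\cdot+b)\rangle_{\mathcal{H}^2}=0$ almost everywhere and upgrading to everywhere.
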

\begin{proof}
    We first observe that, as $\sigma\in C^4_b(\mathbb{R})$, $D^\alpha_xA(x,y)$ is clearly a $C^2_b$ function of $y$ for all $x$. Consider the inner product between $Sh$ and $h$ on $\mathcal{H}^2$: 
    \begin{align} \label{dis_31}
        \begin{split}
            &\langle S h, h \rangle_{\mathcal{H}^2}= \sum_{|\alpha| \leq 2} \langle D^\alpha S h, D^\alpha h \rangle_{L^2} \\
            &= \int_{\Omega^2} \sum_{|\alpha_1| \leq 2, |\alpha_2| \leq 2} D^{\alpha_1}_x h(x) D^{\alpha_2}_y h(y)\bigg ( D^{\alpha_1}_x D^{\alpha_2}_y A(x,y)\bigg ) d\mu(x) d\mu(y)\\
            &= \mathbb{E}_{c,w,b}\bigg [\int_{\Omega^2} \sum_{|\alpha_1| \leq 2, |\alpha_2| \leq 2} \bigg\{D^{\alpha_1}_x h(x) D^{\alpha_2}_y h(y)\\
            &\qquad \times\bigg ( D^{\alpha_1}_x D^{\alpha_2}_y \Big[\sigma(w\cdot x +b)\sigma(w\cdot y +b) +c^2 U(x)\cdot U(y)\Big]\bigg ) d\mu(x) d\mu(y)\bigg\} \bigg ].\\
        \end{split}
    \end{align}
    Notice that \begin{align} \label{dis_32}
        \begin{split}
        &0\leq \mathbb{E}_{c,w,b}\bigg [  \Big\| c \int_{\Omega} \sum_{|\alpha| \leq 2} D^{\alpha}_x h(x)  D^{\alpha}_x U(x) d\mu(x) \Big\| ^2 \bigg ]\\
            &\mathbb{E}_{c,w,b}\bigg [\int_{\Omega^2} \sum_{|\alpha_1| \leq 2, |\alpha_2| \leq 2}\bigg\{ D^{\alpha_1}_x h(x) D^{\alpha_2}_y h(y) \bigg( D^{\alpha_1}_x D^{\alpha_2}_y(c^2 U(x)\cdot U(y))\bigg ) d\mu(x) d\mu(y) \bigg\}\bigg ]
        \end{split}
    \end{align}
    Combining (\ref{dis_31}) with (\ref{dis_32}) we derive 
    \begin{align} \label{dis_33}
        \begin{split}
            &\langle S h, h \rangle_{\mathcal{H}^2} 
            \geq \mathbb{E}_{c,w,b}\bigg [\bigg (\int_{\Omega} \sum_{|\alpha| \leq 2} D^{\alpha}_x h(x)  D^{\alpha}_x \sigma(w\cdot x +b) d\mu(x) \bigg)^2 \bigg ]\geq 0.
        \end{split}
    \end{align}
By Lemma \ref{dis_I}, for $h \neq 0$, there exists $\epsilon >0$, $w^* \in \mathbb{R}^n$, $b^* \in \mathbb{R}$ such that
$0<\epsilon=\Big |\int_{\Omega} \sum_{|\alpha| \leq 2} D^{\alpha}_x h(x)  D^{\alpha}_x \sigma(w^*\cdot x +b^*) d\mu(x)\Big |$.
As this integral is continuous with respect to the parameters $w$, $b$, there exists $\delta >0$ such that for any $(w,b) \in R_\delta:=\{(w,b): \|w-w^*\| + |b-b^*| \leq \delta\}$, we know $
    \bigg |\int_{\Omega} \sum_{|\alpha| \leq 2} D^{\alpha}_x h(x)  D^{\alpha}_x \sigma(w\cdot x +b) d\mu(x)\bigg | \geq \frac{\epsilon}{2}$.
Therefore, from (\ref{dis_33}) we have 
\begin{align}
\begin{split}
    \langle S h, h \rangle_{\mathcal{H}^2} & \geq  \mathbb{E}_{c,w,b}\bigg [\bigg (\int_{\Omega} \sum_{|\alpha| \leq 2} D^{\alpha}_x h(x)  D^{\alpha}_x \sigma(w\cdot x +b) d\mu(x) \bigg)^2 \bigg ]\geq \mathbb{E}_{c,w,b} \Big[\frac{\epsilon^2}{4} \mathbbm{1}_{R_\delta} \Big]>0.
    \end{split}
\end{align}
\end{proof}
\begin{lemma}\label{thm:densityofimA}
 Define the operator $\mathcal{A}:L^2 \to \mathcal{H}^2$ by $(\mathcal{A}f)(y)=\int_\Omega A(x,y)f(x)d\mu(x)$.
 The image of $\mathcal{A}$ is dense in $\mathcal{H}^2$, that is, for any $u \in \mathcal{H}^2$, there exists a sequence $\{v_k\}_{k \in \mathbb{N}}$ in $L^{2}$ such that $\lim_{k \to \infty} \big \| \mathcal{A}v_k - u \big \|_{\mathcal{H}^2} = 0$.
\end{lemma}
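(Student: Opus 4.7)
The plan is a standard duality argument: I will show that the orthogonal complement of $\mathrm{im}(\mathcal{A})$ inside $\mathcal{H}^2$ is trivial, and then invoke Lemma \ref{dis_III}, which already provides the injectivity of a closely related operator.

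Suppose, for contradiction, that $\mathrm{im}(\mathcal{A})$ is not dense in $\mathcal{H}^2$. Then its closure is a proper closed subspace, so by the Riesz representation theorem there exists a nonzero $h\in \mathcal{H}^2$ with $\langle \mathcal{A}f, h\rangle_{\mathcal{H}^2}=0$ for every $f\in L^2$. Using the definition of $\mathcal{A}$, the boundedness of $A$ and its derivatives up to order $2$ in $y$ (which follows from $\sigma\in C^4_b$ and the moment bounds on $c,w,b$), Fubini's theorem and dominated convergence (to justify exchanging $D^\alpha_y$ with the $x$-integral) give
\begin{align}
\langle \mathcal{A}f, h\rangle_{\mathcal{H}^2}
&=\sum_{|\alpha|\leq 2}\int_\Omega D^\alpha_y\!\left(\int_\Omega A(x,y)f(x)\,d\mu(x)\right) D^\alpha h(y)\,d\mu(y)\\
&=\int_\Omega f(x)\Big(\sum_{|\alpha|\leq 2}\int_\Omega D^\alpha_y A(x,y)\,D^\alpha h(y)\,d\mu(y)\Big) d\mu(x).
\end{align}

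The key observation is that $A$ is symmetric: $A(x,y)=A(y,x)$ by inspection of the definition. Hence $D^\alpha_y A(x,y)=D^\alpha_x A(y,x)$, and after relabelling the dummy integration variable, the inner bracket equals exactly $(Sh)(x)$, where $S$ is the operator of Lemma \ref{dis_III}. Thus
\begin{align}
\langle \mathcal{A}f, h\rangle_{\mathcal{H}^2} = \langle f, Sh\rangle_{L^2}\qquad \text{for all } f\in L^2.
\end{align}
Since $Sh\in \mathcal{H}^2\subset L^2$ (by the boundedness of the derivatives of $A$ in $y$, as in Lemma \ref{M} applied to $A$), we may choose $f:=Sh$ and conclude $\|Sh\|_{L^2}^2=0$, i.e.\ $Sh=0$. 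By Lemma \ref{dis_III}, $h=0$, contradicting our assumption. Therefore $\mathrm{im}(\mathcal{A})$ is dense in $\mathcal{H}^2$.

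The main technical points to be careful about are: (i) justifying the interchange of $D^\alpha_y$ with the $x$-integral, which follows from the uniform boundedness of the integrand and its $y$-derivatives up to order $2$ on $\overline\Omega\times\overline\Omega$ (a consequence of the bounds on $\sigma\in C^4_b$ and on the law of $(c,w,b)$, mirroring Lemma \ref{M}); and (ii) the symmetry-based identification of the ``adjoint'' of $\mathcal{A}$ with $S$, which is where the proof would break down for a non-symmetric kernel. Everything else is a routine Hahn--Banach / Riesz duality argument, and the heavy lifting (the injectivity of $S$, which ultimately rests on Hornik's density theorem via Lemma \ref{dis_I}) has already been carried out.
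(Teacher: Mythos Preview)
Your proof is correct and follows essentially the same approach as the paper's. Both arguments establish the identity $\langle \mathcal{A}f, h\rangle_{\mathcal{H}^2} = \langle f, Sh\rangle_{L^2}$, then take $f=Sh$ and invoke Lemma \ref{dis_III}; the paper phrases this via the adjoint decomposition $\mathcal{H}^2=\overline{\mathrm{im}(\mathcal{A})}\oplus\ker(\mathcal{A}^*)$, while you phrase it as a direct orthogonality/contradiction argument, and your explicit use of the symmetry $A(x,y)=A(y,x)$ makes transparent what the paper achieves by relabelling variables.
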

\begin{proof}
By the smoothness of the kernel $A$, we have $\mathrm{im}(\mathcal{A}):=\{ \mathcal{A}v: v \in L^{2}\}\subset \mathcal{H}^2$.
    By definition, for any $h,g \in \mathcal{H}^2$, the inner product between $\mathcal{A}h$ and $g$ in $\mathcal{H}^2$ space is \begin{align}
        \begin{split}
            \langle \mathcal{A}g, h \rangle_{\mathcal{H}^2} &= \sum_{|\alpha|\leq 2}\langle D^\alpha \mathcal{A}g, D^\alpha h \rangle_{L^2}= \int_\Omega \sum_{|\alpha|\leq 2} D^\alpha_x h(x) \int_\Omega D^\alpha_x A(x,y)g(y)d\mu(y) d\mu(x)\\
            &= \int_\Omega g(y) \int_\Omega \sum_{|\alpha|\leq 2}D_x^\alpha h(x) D_x^\alpha A(x,y) d\mu(x) d\mu(y)= \big \langle g, S h \big \rangle_{L^2}.
        \end{split}
    \end{align}
    We introduce the adjoint operator of $\mathcal{A}$ on $\mathcal{H}^2$, denoted $\mathcal{A}^*$, and write \begin{align} \label{dense_1}
        \begin{split}
            \langle \mathcal{A}g, h \rangle_{\mathcal{H}^2}= \langle g, \mathcal{A}^* h \rangle_{\mathcal{H}^2} = \langle g, S h \rangle_{L^2}.
        \end{split}
    \end{align}
    By Lemma \ref{dis_III}, $S h=0$ if and only if $h=0$. Therefore, by setting $g=S h$ in (\ref{dense_1}), we have $\langle g, \mathcal{A}^* h \rangle_{\mathcal{H}^2}= \langle S h, S h \rangle_{L^2} >0$ for any non-zero $h$. Thus, $
        \mathrm{ker}(\mathcal{A}^*):=\{h \in \mathcal{H}^2: \mathcal{A}^* h=0 \}=\{0\}$.
    Write $\overline{\mathrm{im}(\mathcal{A})}$ for the closure in $\mathcal{H}^2$ of $\mathrm{im}(\mathcal{A})$. We recall that the ortho-complement in $\mathcal{H}^2$ of $\mathrm{im}(\mathcal{A})$ is $\mathrm{ker}(\mathcal{A}^*)$, so we can decompose the space $\mathcal{H}^2$ as $\mathcal{H}^2= \overline{\mathrm{im}(\mathcal{A})} \oplus \mathrm{ker}(\mathcal{A}^*)$.
    We know that $\mathrm{ker}(\mathcal{A}^*)=\{0\}$ in $\mathcal{H}^2$, and hence conclude that $\mathrm{im}(\mathcal{A})$ is dense in $\mathcal{H}^2$.
\end{proof}
\begin{theorem}[Density in $\mathcal{H}_{(0)}^2$]\label{thm:densityofimB}
The image of $\mathcal{B}$ is dense in $\mathcal{H}^2_{(0)}$, that is, for any $u \in \mathcal{H}^2_{(0)}$, there exists a sequence $\{v_k\}_{k \in \mathbb{N}}$ in $L^{2}_\eta$ such that $\lim_{k \to \infty} \| \mathcal{B}v_k - u \|_{\mathcal{H}^2} = 0$.
\end{theorem}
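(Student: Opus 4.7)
The plan is to combine the density of $\mathrm{im}(\mathcal{A})$ in $\mathcal{H}^2$ (Lemma \ref{thm:densityofimA}) with the factorisation $\mathcal{B}v(y) = \eta(y)\,\mathcal{A}(\eta v)(y)$ and the approximation result for $\mathcal{H}^2_{(0)}$ given in Lemma \ref{lemma_eta}. The rough idea is: if $u\in \mathcal{H}^2_{(0)}$ factors as $u=\eta \tilde u$ with $\tilde u\in \mathcal{H}^2$, then approximating $\tilde u$ by $\mathcal{A}w_k$ and setting $v_k = w_k/\eta$ yields $\mathcal{B}v_k = \eta \,\mathcal{A}w_k \to \eta \tilde u = u$.

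More concretely, first I would reduce to the smooth case. By Lemma \ref{lemma_eta}(i), the space $C^3(\overline\Omega)\cap C_0(\overline\Omega)$ is dense in $\mathcal{H}^2_{(0)}$ under the $\mathcal{H}^2$ norm, so it suffices to show that every $u\in C^3(\overline\Omega)\cap C_0(\overline\Omega)$ can be approximated in $\mathcal{H}^2$ by elements of $\mathrm{im}(\mathcal{B})$. For such a $u$, Lemma \ref{lemma_eta}(ii) gives $\tilde u := u/\eta \in C^2_b(\Omega) \subset \mathcal{H}^2$. Lemma \ref{thm:densityofimA} then produces a sequence $\{w_k\}\subset L^2$ with $\mathcal{A}w_k \to \tilde u$ in $\mathcal{H}^2$. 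Setting $v_k := w_k/\eta$, we have $\eta v_k = w_k \in L^2$, so $v_k \in L^2_\eta$ (this is the whole point of introducing the enlarged space $L^2_\eta$). Then by the definition of $\mathcal{B}$,
\begin{equation}
\mathcal{B}v_k(y) = \eta(y)\int_\Omega A(x,y)\eta(x)v_k(x)\,d\mu(x) = \eta(y)\,\mathcal{A}w_k(y).
\end{equation}
Since $\eta \in C^3_b$, the pointwise multiplication $h\mapsto \eta\,h$ is a bounded operator on $\mathcal{H}^2$ (one just uses the Leibniz rule together with the uniform bounds on $\eta$ and its derivatives up to order two). Hence $\mathcal{B}v_k = \eta\,\mathcal{A}w_k \to \eta\,\tilde u = u$ in $\mathcal{H}^2$.

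Finally, a standard diagonal argument extends the density to all of $\mathcal{H}^2_{(0)}$: given arbitrary $u\in \mathcal{H}^2_{(0)}$, pick $u^{(m)}\in C^3(\overline\Omega)\cap C_0(\overline\Omega)$ with $\|u^{(m)} - u\|_{\mathcal{H}^2}\to 0$, and for each $m$ pick $v_k^{(m)}\in L^2_\eta$ with $\|\mathcal{B}v_k^{(m)} - u^{(m)}\|_{\mathcal{H}^2}\to 0$ as $k\to\infty$; diagonalising yields a sequence of elements of $\mathrm{im}(\mathcal{B})$ converging to $u$.

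The argument is essentially mechanical once the ingredients are in hand; the only slightly delicate point is ensuring that $v_k := w_k/\eta$ genuinely lives in the domain of $\mathcal{B}$ despite the singularity of $1/\eta$ near $\partial\Omega$, which is precisely why the authors work with $L^2_\eta$ rather than $L^2$. The reduction via $\tilde u = u/\eta$ is where Lemma \ref{lemma_eta}(ii) is essential: without smoothness of $u$ up to the boundary and the non-vanishing gradient of $\eta$ there (Assumption \ref{auxiliaryfunction}), there is no guarantee that the quotient $u/\eta$ is even in $\mathcal{H}^2$, which would prevent the application of Lemma \ref{thm:densityofimA}.
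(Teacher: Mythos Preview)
Your proof is correct and follows essentially the same route as the paper's: reduce to $u\in C^3(\overline\Omega)\cap C_0(\overline\Omega)$ via Lemma \ref{lemma_eta}(i), set $\tilde u=u/\eta\in\mathcal{H}^2$ via Lemma \ref{lemma_eta}(ii), approximate $\tilde u$ by $\mathcal{A}w_k$ using Lemma \ref{thm:densityofimA}, and then observe $\mathcal{B}(w_k/\eta)=\eta\,\mathcal{A}w_k\to\eta\tilde u=u$ in $\mathcal{H}^2$ by boundedness of multiplication by $\eta$. Your write-up is in fact slightly more careful than the paper's (you correctly note $w_k\in L^2$ rather than $\mathcal{H}^2$, and make the diagonal argument explicit).
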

\begin{proof}
For a function $u \in C^3(\overline\Omega)\cap C_0(\overline\Omega)$, we define the function 
$\tilde{u}=u/\eta$. By Lemma \ref{lemma_eta}(\emph{ii}), $\tilde{u}\in C^2_b(\Omega)\subset \mathcal{H}^2$. Since the image of $\mathcal{A}$ is dense in $\mathcal{H}^2$, there exists a sequence $w_k\in \mathcal{H}^2$ such that $\|\mathcal{A}w_k-\tilde{u}\|_{\mathcal{H}^2} \to 0$. Consequently, the boundedness of $\eta$ and its derivatives shows $
\big\|\eta(\mathcal{A}w_k-\tilde{u})\big\|_{\mathcal{H}^2}=\big\|\mathcal{B}\big(w_k/\eta\big)-u\big\|_{\mathcal{H}^2} \to 0$.
Since $C^3(\overline\Omega)\cap C_0(\overline\Omega)$ is dense in $\mathcal{H}^2_{(0)}$ (under the $\mathcal{H}^2$ topology, Lemma \ref{lemma_eta}(\emph{i})), we conclude that the image of $\mathcal{B}$ is dense in $\mathcal{H}^2_{(0)}$.
\end{proof}
\begin{remark}
The above result shows that it is, in principle, possible to approximate the PDE solution $u$ using an infinite neural network, with the boundary condition enforced by $\eta$. It also justifies the boundary-matching method proposed by \cite{mcfall2009artificial}. In particular, as $u-(1-\eta)f\in \mathcal{H}^2_{(0)}$, there exists a sequence $v_k\in L^2_\eta$ such that $\mathcal{B}v_k + (1-\eta)f \to u$ in $\mathcal{H}^2$. Of course, whether a particular training algorithm yields such a sequence is a separate question.
\end{remark}

We collect the remaining key properties of $\mathcal{B}$ into the following lemma.
\begin{lemma}\label{lem:BLip}
The linear operator $\mathcal{B}:L^2_\eta\to \mathcal{H}^2$ has the following properties:
\begin{enumerate}
\item $\mathcal{B}$ is strictly positive definite, and induces a norm $\|\cdot\|_\mathcal{B}$ on $L^2_\eta$, given by $
     \|v \|_\mathcal{B}:= \sqrt{\langle \mathcal{B}v, v \rangle_{L^2}}$.
In particular, $\|v\|_\mathcal{B}<\infty$ for all $v\in L^2_\eta$.
\item There exists a constant $\lambda>0$ such that $\|\mathcal{B} v \|^2_{L^2} \leq \lambda \|v\|_{\mathcal{B}}^2$.
\end{enumerate}
\end{lemma}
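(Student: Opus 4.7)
The plan is to reduce both claims to standard spectral properties of the integral operator $\mathcal{A}$ introduced in Lemma~\ref{thm:densityofimA}. The key observation is that $B(x,y)=\eta(x)\eta(y)A(x,y)$ factorizes so that, for $v\in L^2_\eta$, setting $u:=\eta v\in L^2$ one has $\mathcal{B}v(y) = \eta(y)\,(\mathcal{A}u)(y)$. Fubini's theorem (which applies because $|B|$ is uniformly bounded by Lemma~\ref{M} and $\eta v\in L^1$ on the finite measure space) then yields the identity
\begin{align}
\langle \mathcal{B}v, v\rangle_{L^2} = \langle \mathcal{A}u, u\rangle_{L^2}.
\end{align}
Since $\mathcal{A}$ is bounded on $L^2$ (its kernel $A$ is bounded on the bounded set $\Omega^2$), the right-hand side is finite, and $\|v\|_\mathcal{B}<\infty$ on all of $L^2_\eta$.

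For part~(i), I would apply Fubini a second time to exchange the $(c,w,b)$-expectation with the spatial integrals in $\langle \mathcal{B}v,v\rangle_{L^2}$. Using the definition of $A$, this yields the sum-of-squares representation
\begin{align}
\langle \mathcal{B}v, v\rangle_{L^2} = \mathbb{E}_{c,w,b}\Big[\Big(\int_\Omega \eta(x)\sigma(w\cdot x+b)v(x)\,d\mu(x)\Big)^{2} + c^2\Big\|\int_\Omega \eta(x) U(x;w,b)v(x)\,d\mu(x)\Big\|^{2}\Big],
\end{align}
which is manifestly nonnegative. If $\|v\|_\mathcal{B}=0$, the first integrand must vanish almost surely in $(w,b)$; it is continuous in $(w,b)$ by dominated convergence, and by Assumption~\ref{initialization} the distribution of $(w_0^i,b_0^i)$ has full support on $\mathbb{R}^{n+1}$, so the vanishing extends to every $(w,b)\in\mathbb{R}^{n+1}$. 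Combining Hornik's density result (cf.\ Lemma~\ref{dis_I}) with the density of $\mathcal{H}^2$ in $L^2$ shows that the linear span of $\{\sigma(w\cdot x+b)\}_{w,b}$ is dense in $L^2(\Omega,\mu)$, hence $\eta v=0$ in $L^2$, and since $\eta>0$ on $\Omega$ this forces $v=0$ in $L^2_\eta$. Bilinearity then upgrades $\|\cdot\|_\mathcal{B}$ to a genuine norm.

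For part~(ii), I exploit that $\mathcal{A}:L^2\to L^2$ is bounded, self-adjoint (by symmetry of $A$), and positive semi-definite (by the same sum-of-squares calculation applied directly to $\mathcal{A}$). The spectral theorem yields the operator inequality $\mathcal{A}^2\leq \|\mathcal{A}\|_{L^2\to L^2}\,\mathcal{A}$, and so
\begin{align}
\|\mathcal{B}v\|_{L^2}^2 \leq \|\eta\|_\infty^2\|\mathcal{A}u\|_{L^2}^2 = \|\eta\|_\infty^2\langle \mathcal{A}^2 u,u\rangle_{L^2} \leq \|\eta\|_\infty^2\|\mathcal{A}\|_{L^2\to L^2}\langle \mathcal{A}u,u\rangle_{L^2} = \lambda \|v\|_\mathcal{B}^2,
\end{align}
with $\lambda:=\|\eta\|_\infty^2\|\mathcal{A}\|_{L^2\to L^2}$, giving (ii).

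The main obstacle is the strict positivity step in (i): promoting the almost-sure-in-$(w,b)$ vanishing to pointwise vanishing using full support and continuity, and then upgrading Hornik's $\mathcal{H}^2$-density theorem to $L^2$-density of the neural-network activations (obtained by combining it with the density of $\mathcal{H}^2$ in $L^2$). The remaining steps are routine Fubini and Hilbert-space spectral arguments.
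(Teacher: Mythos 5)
Your proposal is correct and follows essentially the same route as the paper's proof: reduce $\langle \mathcal{B}v,v\rangle$ to $\langle \mathcal{A}u,u\rangle$ with $u=\eta v$, expand into the sum-of-squares representation via Fubini, get strict positivity from full support of the weight distribution plus continuity in $(w,b)$ plus density of $\{\sigma(w\cdot x+b)\}$, and obtain (ii) from the spectral theorem for the positive self-adjoint operator $\mathcal{A}$. The only cosmetic differences are that the paper invokes Hornik's $L^p$-density result (Theorem~5) directly, where you derive $L^2$-density from Theorem~4 plus density of $\mathcal{H}^2$ in $L^2$, and the paper uses the Hilbert--Schmidt property to justify the eigenvalue bound rather than the operator inequality $\mathcal{A}^2\leq\|\mathcal{A}\|\,\mathcal{A}$; both variants are valid.
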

\begin{proof}
For $v\in L^2_\eta$, let $g:=\eta v \in L^2$. By definition, \begin{align}
\begin{split}
    \langle v, \mathcal{B}v \rangle &= \langle v, \eta \mathcal{A}g \rangle=\langle g, \mathcal{A}g \rangle=\int_{\Omega^2} g(x)g(y)A(x,y)d\mu(x)d\mu(y)\\
    &=\int_{\Omega^2} g(x)g(y)\mathbb{E}_{c,w,b}\big[\sigma(w\cdot x+b)\sigma(w\cdot y+b)\big]d\mu(x)d\mu(y)\\
    &\qquad +\int_{\Omega^2} g(x)g(y)\mathbb{E}_{c,w,b}\big[c^2 U(x)\cdot U(y)\big]d\mu(x)d\mu(y)\\
    &=\mathbb{E}_{c,w,b}\bigg[ \bigg(\int_\Omega \sigma(w\cdot x+b)g(x)d\mu(x)\bigg )^2+\bigg \| c \int_\Omega U(x)g(x)d\mu(x) \bigg \|^2 \bigg ]\\
    & \geq \mathbb{E}_{c,w,b}\bigg[ \bigg(\int_\Omega \sigma(w\cdot x+b)g(x)d\mu(x)\bigg )^2 \bigg ] \geq 0.
\end{split}
\end{align}  
For $v\neq 0$, we know $g \neq 0$. With Assumption \ref{assume_activation}, by Theorem 5 in \cite{hornik1991approximation}, there exists $w^* \in \mathbb{R}^2$, $b^* \in \mathbb{R}$ such that $\big|\int_\Omega \sigma(w^* \cdot x+b^*)g(x)d\mu(x)\big|=\epsilon >0$. As the integral is continuous with respect to parameters $w,b$ there exists $\delta>0$ such that for any $(w,b)$ in the set $R_\delta:=\{(w,b): \|w-w^*\|+|b-b^*|< \delta\}$, we have $|\int_\Omega \sigma(w\cdot x+b)g(x)d\mu(x)|> \epsilon/2$. Therefore $    \langle v, \mathcal{B}v \rangle  \geq \mathbb{E}_{c,w,b}\big[ \big(\int_\Omega \sigma(w\cdot x+b)g(x)d\mu(x)\big )^2 \big ] \geq \mathbb{E}_{c,w,b}\big[\mathbbm{1}_{R_\delta}\frac{\epsilon^2}{4}\big] >0$; so $\mathcal{B}$ defines a norm as stated.

From the above calculations, we also see that $\mathcal{A}$ is a positive-definite Hilbert--Schmidt integral operator on $L^2$. Therefore, its eigenfunctions span the entire $L^2$ space and the spectral theorem applies, in particular $\mathcal{A}$ has nonnegative eigenvalues bounded above. Suppose $\lambda$ is the supremum of the eigenvalues of $\mathcal{A}$, we have $\|\mathcal{A}g\|_{L^2} \leq \lambda \langle g, \mathcal{A}g \rangle$. Thus, as $|\eta(x)|<1$, we know $
        \|\mathcal{B} v \|^2_{L^2}= \|\eta \mathcal{A} (\eta v) \|^2_{L^2}\leq \| \mathcal{A} (\eta v) \|^2_{L^2} \leq \lambda \langle \eta v, \mathcal{A}(\eta v)\rangle = \lambda \langle v, \mathcal{B}v\rangle =\lambda \|v\|_{\mathcal{B}}^2$.
\end{proof}

\section{Convergence to the limit ODE} \label{sec_converge}
In this section, we seek to understand the behavior of our approximator $Q_t^N$ when $N\to\infty$.
We prove that the pre-limit process, $\{Q^N_t(\cdot)\}_{0\leq t \leq T}$, converges to a limiting process $\{Q_t(\cdot)\}_{0\leq t \leq T }$, in an appropriate space of functions. In this section, we will only use Assumptions \ref{PDEAssumptions}(\textit{i}--\textit{iii}), in particular we will not need the assumption that $\mathcal{L}$ is monotone or that a solution to the PDE exists in $\mathcal{H}^2$. The challenge here is that our operator $\mathcal{L}$ is not generally the gradient of any potential function, and is an unbounded operator (in the $L^2$ or supremum norms), and so some care is needed.

In the following subsections, we first  bound the difference between the kernels $B^N_t(x,y)$ and $B(x,y)$ in a convenient sense. We then show that, as $N$ becomes large, the neural network approximator converges to the solution of an infinite dimensional ODE. Our main convergence result is Theorem \ref{converge_1}.

\subsection{Characterizing the difference between kernels}\label{sec31}
We characterize the (second order) difference between two kernels at $(x,y)$ by
\begin{align}
\begin{split}
    {H(A_1,A_2)(x,y)}&:= |A_1(x,y)-A_2(x,y)|
    +\sum_k |\partial_{y_k}A_1(x,y)-\partial_{y_k}A_2(x,y)|\\& \quad +\sum_{k,l}|\partial^2_{y_k y_l}A_1(x,y)-\partial^2_{y_k y_l}A_2(x,y)|.
\end{split}
\end{align}

Note that partial derivatives are only taken with respect to $y$ components. From 
Lemma \ref{M}, there exists a constant $M$ such that, for all $x,y \in \Omega$,
\begin{align} \label{bound_B}
    H(B,0)(x,y)\leq M.
\end{align}
Similarly, the (second order) difference between two smooth functions is characterized by
\begin{align} \label{48}
    \begin{split}
        {G(f_1,f_2)(x)}:&=|f_1(x)-f_2(x)|
    +\sum_k |\partial_{x_k}f_1(x)-\partial_{x_k}f_2(x)|\\&\qquad +\sum_{k,l}|\partial^2_{x_k x_l}f_1(x)-\partial^2_{x_k x_l}f_2(x)|.
    \end{split}
    \end{align}
\subsubsection{Difference between kernel $B_t^N(x,y)$ and $B_0^N(x,y)$}
In this subsection we characterize the difference between kernel $B^N_t$ and $B^N_0$. We denote the expectation taken with respect to randomized initialization by $\mathbb{E}_{c_0,w_0,b_0}[\,\cdot\,]$.
The proofs of the following lemmas are included in the appendix.
\begin{lemma} \label{ata0_lemma_1}
There exists $C>0$ such that, for all $i,j \in \{1, 2, ..., n\}$, all $0 \leq t \leq T$, and all $N>0$,
\begin{align}
|A_t^N(x,y)-A_0^N(x,y)|&\leq CN^{\delta+\beta-1}, \label{39} \\
    |\partial_{y_i}A_t^N(x,y)-\partial_{y_i} A_0^N(x,y)| &\leq C N^{\delta+\beta-1} \bigg [ 1+ \frac{\sum_{k=1}^N |(w_0^k)_i|}{N} \bigg ], \label{40}\\
|\partial^2_{y_i y_j}A_t^N(x,y)-\partial^2_{y_i y_j} A_0^N(x,y)|
&\leq C N^{\delta+\beta-2}\bigg ( \sum_{k=1}^N \big(1+|(w_0^k)_i|\big)\big(1+|(w_0^k)_j|\big)
     \bigg ). \label{41}
\end{align}
\end{lemma}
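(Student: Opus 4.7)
The plan is to first obtain a deterministic bound on how far each individual parameter can drift under the training dynamics, then use smoothness to transfer this bound to the kernel $A^N$ and its $y$-derivatives via careful algebraic splitting of the differences.

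First I would bound the parameter drift. From \eqref{Q-Learning}, $\partial_t \theta_t^i = -\alpha N^{2\beta-1}\int_\Omega \psi^N(\mathcal{L}Q_t^N)(\psi^N)'(\mathcal{L}Q_t^N)\,\partial_{\theta^i}(-Q_t^N)\,d\mu$. Using $|\psi^N|\le 2N^\delta$, $|(\psi^N)'|\le 1$, boundedness of $\eta$, and the factor $1/N^\beta$ carried by $\partial_{\theta^i}S_t^N$ (together with $\sigma,\sigma'\in C^4_b$, $|c_t^i|<C$ from Lemma \ref{bounded_t}, and the boundedness of $\Omega$), every component of $G^N(\theta_t)$ is at most $CN^{\delta-\beta}$ uniformly along the sample path. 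Therefore $|\partial_t c_t^i|$, $|\partial_t (w_t^i)_k|$, $|\partial_t b_t^i|\le CN^{\beta+\delta-1}$, and integrating on $[0,T]$ yields
\begin{align}
|c_t^i-c_0^i|,\ \|w_t^i-w_0^i\|,\ |b_t^i-b_0^i|\le CN^{\beta+\delta-1}.
\end{align}
By Definition \ref{psi}, $\beta+\delta-1<0$, so these drifts vanish as $N\to\infty$; in particular $|(w_t^i)_k|\le |(w_0^i)_k|+C$ uniformly in $N$ and $t\in[0,T]$.

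Next I would set $\phi(c,w,b;x,y):=\sigma(w\cdot x+b)\sigma(w\cdot y+b)+c^2\sigma'(w\cdot x+b)\sigma'(w\cdot y+b)(x\cdot y+1)$ so that $A_t^N-A_0^N = \frac{1}{N}\sum_i[\phi(\theta_t^i)-\phi(\theta_0^i)]$. For \eqref{39}, the mean value theorem and the drift bound give $|\phi(\theta_t^i)-\phi(\theta_0^i)|\le CN^{\beta+\delta-1}$ immediately, since $\phi$ has bounded gradient in $(c,w,b)$ on the region where $|c|$ is bounded and $x,y\in\Omega$. For \eqref{40}, expanding $\partial_{y_k}\phi$ exhibits a sum of terms of the form $h(\theta)w_k$ (coming from differentiating $\sigma(w\cdot y+b)$ or $\sigma'(w\cdot y+b)$ in $y$) and $g(\theta)$ (coming from differentiating the $(x\cdot y+1)$ factor), with $h,g$ uniformly bounded and Lipschitz in $\theta$. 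On each $w_k$-containing term, apply the splitting
\begin{align}
h(\theta_t^i)(w_t^i)_k-h(\theta_0^i)(w_0^i)_k=[h(\theta_t^i)-h(\theta_0^i)](w_t^i)_k + h(\theta_0^i)[(w_t^i)_k-(w_0^i)_k].
\end{align}
The first summand is bounded by $CN^{\beta+\delta-1}(|(w_0^i)_k|+1)$ using $|(w_t^i)_k|\le |(w_0^i)_k|+C$, and the second by $CN^{\beta+\delta-1}$; summing over $i$ and dividing by $N$ produces the $N^{-1}\sum |(w_0^i)_k|$ term appearing in \eqref{40}.

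Finally, \eqref{41} follows the same pattern, but $\partial^2_{y_k y_l}\phi$ additionally contains $w_kw_l$ factors. The two-step splitting
\begin{align}
(w_t^i)_k(w_t^i)_l - (w_0^i)_k(w_0^i)_l = (w_t^i)_k[(w_t^i)_l-(w_0^i)_l] + (w_0^i)_l[(w_t^i)_k-(w_0^i)_k]
\end{align}
together with $|(w_t^i)_k(w_t^i)_l|\le(|(w_0^i)_k|+C)(|(w_0^i)_l|+C)$ controls the quadratic factor, and combining with the one-step splitting for linear $w$-terms yields exactly the summands $|(w_0^i)_k||(w_0^i)_l|+|(w_0^i)_k|+|(w_0^i)_l|$ appearing in \eqref{41}. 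The main obstacle is purely a bookkeeping one: each $y$-derivative introduces a potentially unbounded $w$-factor, so every splitting must pair a small difference $w_t^i-w_0^i$ against an \emph{initial}-time factor $w_0^i$ (whose expected size is controlled by Lemma \ref{bounded_t} and enters the final bound on the right-hand side), never against a time-$t$ factor that would spoil the estimate. Once this convention is fixed, all three inequalities reduce mechanically to counting how many $w_k$ or $w_l$ factors survive in $\partial^\alpha_y\phi$.
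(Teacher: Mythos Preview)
Your proposal is correct and follows essentially the same approach as the paper: both establish the $O(N^{\beta+\delta-1})$ drift bound on each parameter from the truncated gradient flow, then transfer it to $A^N$ and its $y$-derivatives via the mean value theorem together with an algebraic splitting that isolates the $w_k$ (and $w_k w_l$) factors against initial-time quantities. The only difference is presentational---the paper writes out each summand of $A_t^N$, $\partial_{y_i}A_t^N$, $\partial^2_{y_iy_j}A_t^N$ explicitly rather than packaging them in a single function $\phi$---but the estimates and the splittings are identical.
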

\begin{lemma}\label{lemma33} There exists $C>0$ such that, for all $0 \leq t \leq T$ and all $N>0$, the expected difference between   $A_t^N$ and $A_0^N$ satisfies
\begin{align}
\begin{split}
     \mathbb{E}_{c_0,w_0,b_0}\Big[\big|H(A_t^N, A_0^N)(x,y)\big|^2\Big]
     &\leq C N^{2(\delta+\beta-1)}.
\end{split}
\end{align} \label{error_ata0_expectation}
\end{lemma}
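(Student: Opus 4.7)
The plan is to apply the pointwise bounds \eqref{39}--\eqref{41} from Lemma \ref{ata0_lemma_1}, square them, and integrate against the initialization distribution. The first task is to reduce the bound on the square of the sum $H(A_t^N,A_0^N)$ to bounds on each squared summand. Since the dimension $n$ is fixed, by $(a_0+a_1+a_2)^2 \leq 3(a_0^2+a_1^2+a_2^2)$ together with $\bigl(\sum_i b_i\bigr)^2 \leq n\sum_i b_i^2$ and $\bigl(\sum_{i,j} b_{ij}\bigr)^2 \leq n^2\sum_{i,j}b_{ij}^2$, it suffices to bound the expectations of $|A_t^N-A_0^N|^2$, $|\partial_{y_i}(A_t^N-A_0^N)|^2$ and $|\partial^2_{y_iy_j}(A_t^N-A_0^N)|^2$ individually, uniformly in $i,j$.

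The zeroth-order term is trivial: the bound \eqref{39} is deterministic, giving $\mathbb{E}[|A_t^N-A_0^N|^2]\leq C^2N^{2(\delta+\beta-1)}$. For the first-order term, I square \eqref{40}, use $(1+x)^2\le 2(1+x^2)$, and estimate the sample average by Jensen's (or Cauchy--Schwarz) inequality: $\bigl(\tfrac1N\sum_{k}|(w_0^k)_i|\bigr)^2 \leq \tfrac1N\sum_{k}(w_0^k)_i^2$. Taking expectation and invoking the iid-across-$k$ structure, this reduces to $\mathbb{E}[(w_0^1)_i^2]$, which is finite by Assumption \ref{initialization} (the finite third moment implies a finite second moment). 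This contributes a factor independent of $N$, and so the first-order term is also bounded by $CN^{2(\delta+\beta-1)}$.

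The delicate case is the second-order term, where the pointwise estimate \eqref{41} contains the cross product $\tfrac1N\sum_k |(w_0^k)_i||(w_0^k)_j|$. After expanding the square, I estimate
\begin{align}
    \mathbb{E}\Big[\Big(\tfrac{1}{N}\sum_{k=1}^{N}|(w_0^k)_i||(w_0^k)_j|\Big)^2\Big] = \tfrac{1}{N^2}\sum_{k,\ell}\mathbb{E}\bigl[|(w_0^k)_i||(w_0^k)_j||(w_0^\ell)_i||(w_0^\ell)_j|\bigr],
\end{align}
and split into diagonal ($k=\ell$) and off-diagonal ($k\neq \ell$) contributions. The off-diagonal part factorizes by independence across $k$ and is bounded by $(\mathbb{E}[|(w_0^1)_i||(w_0^1)_j|])^2 \leq \mathbb{E}[(w_0^1)_i^2]\mathbb{E}[(w_0^1)_j^2]<\infty$ via Cauchy--Schwarz. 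The diagonal part, after the further elementary bound $|(w_0^k)_i||(w_0^k)_j|\leq \tfrac12\bigl((w_0^k)_i^2+(w_0^k)_j^2\bigr)$, is controlled by the moment hypothesis on $w_0$ and carries a prefactor $1/N$, so contributes negligibly. Combining these with analogous (and simpler) bounds for the linear $|(w_0^k)_i|$ and $|(w_0^k)_j|$ terms in \eqref{41}, I obtain a bound of the form $CN^{2(\delta+\beta-1)}$ for the second-order term as well.

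Summing the three contributions yields the claimed estimate. The main obstacle is the cross product $|(w_0^k)_i||(w_0^k)_j|$ arising in the second derivative: its square mixes two components of $w_0^k$ and naively would require fourth-moment control. The step above that circumvents this, by separating the expectation into an off-diagonal part (handled by independence and Cauchy--Schwarz using only second moments) and a vanishing diagonal part, is the heart of the argument.
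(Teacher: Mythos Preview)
Your approach is essentially the paper's: square the pointwise bounds \eqref{39}--\eqref{41}, take expectations, and sum over indices. The paper's proof is even terser---it simply states ``taking squares and then expectations'' and does not perform your diagonal/off-diagonal split; it implicitly bounds $\mathbb{E}\bigl[(\tfrac1N\sum_k |(w_0^k)_i||(w_0^k)_j|)^2\bigr]$ by $\mathbb{E}\bigl[|(w_0^1)_i|^2|(w_0^1)_j|^2\bigr]$ via Jensen and moves on.

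One caveat worth flagging: your diagonal step still lands on $\mathbb{E}\bigl[(w_0^k)_i^2(w_0^k)_j^2\bigr]$, and your AM--GM bound $|(w_0^k)_i||(w_0^k)_j|\le\tfrac12\bigl((w_0^k)_i^2+(w_0^k)_j^2\bigr)$, once squared, requires a \emph{fourth} moment of the components of $w_0$, whereas Assumption~\ref{initialization} only guarantees third moments. The $1/N$ prefactor does not help if that expectation is infinite. The paper's proof has exactly the same hidden requirement (it also reappears in the proof of Lemma~\ref{a0a_lemma}), so this is a shared minor gap in the stated hypotheses rather than a flaw in your strategy; it is closed by assuming either one more moment on $w_0$ or independence of the coordinates of $w_0^i$.
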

After characterizing the difference between $A_t^N$ and $A_0^N$, we estimate the difference between the kernels $B_t^N$ and $B_0^N$. 

\begin{lemma}\label{l21} There exists $C>0$ such that, for all $0 \leq t \leq T$ and all $N>0$
\begin{align}
    \mathbb{E}_{c_0,w_0,b_0}\bigg[{N^{2\delta} \int_{\Omega^2}\big|H(B_t^N,B_0^N)(x,y)\big|^2 d\mu(x)d\mu(y)}\bigg]  \leq C N^{2(2\delta + \beta -1)}.
\end{align}
\end{lemma}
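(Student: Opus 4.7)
The plan is to reduce this to Lemma \ref{lemma33} by exploiting the simple multiplicative structure $B^N_t(x,y) = \eta(x)\eta(y)A^N_t(x,y)$, which gives
\begin{align}
B^N_t(x,y) - B^N_0(x,y) = \eta(x)\eta(y)\bigl[A^N_t(x,y) - A^N_0(x,y)\bigr].
\end{align}
Since the quantity $H$ only involves derivatives in the $y$ variable, the factor $\eta(x)$ is harmless (it is bounded by $1$), and the $y$-derivatives act only on $\eta(y)$ and $A^N_t - A^N_0$.

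First I would apply the Leibniz product rule to compute $\partial_{y_k}(B^N_t - B^N_0)$ and $\partial^2_{y_k y_l}(B^N_t - B^N_0)$. Each resulting term is a product of $\eta(x)$ or $\partial\eta(y)$-type factors (bounded uniformly by some constant $C_\eta$, thanks to Assumption \ref{auxiliaryfunction}, since $\eta\in C^3_b$) with one of $A^N_t - A^N_0$, $\partial_{y_k}(A^N_t-A^N_0)$, or $\partial^2_{y_k y_l}(A^N_t-A^N_0)$. Taking absolute values and summing, one obtains the pointwise estimate
\begin{align}
\bigl|H(B^N_t, B^N_0)(x,y)\bigr| \leq C\,\bigl|H(A^N_t, A^N_0)(x,y)\bigr|
\end{align}
for a constant $C$ depending only on $n$ and $\|\eta\|_{C^2_b}$.

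Squaring this pointwise inequality, integrating over $\Omega\times\Omega$, taking expectation, and invoking Fubini (valid since $\mu(\Omega)<\infty$), I can bound
\begin{align}
\mathbb{E}_{c_0,w_0,b_0}\!\int_{\Omega^2}\!\bigl|H(B^N_t, B^N_0)(x,y)\bigr|^2 d\mu(x)d\mu(y)
\leq C^2\!\int_{\Omega^2}\!\mathbb{E}_{c_0,w_0,b_0}\!\bigl[|H(A^N_t, A^N_0)(x,y)|^2\bigr]d\mu(x)d\mu(y).
\end{align}
Applying Lemma \ref{lemma33} to the integrand gives the pointwise bound $C\,N^{2(\delta+\beta-1)}$, uniform in $(x,y)$, so the double integral is bounded by $C\mu(\Omega)^2 N^{2(\delta+\beta-1)}$. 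Multiplying through by $N^{2\delta}$ then yields $CN^{2(2\delta+\beta-1)}$, which is the desired estimate.

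There is no real obstacle: this is essentially a bookkeeping exercise reducing the $B$-kernel bound to the $A$-kernel bound via the Leibniz rule, with all the probabilistic content already packaged in Lemma \ref{lemma33}. The only thing to be mildly careful about is confirming that the Leibniz expansion of $\partial^2_{y_k y_l}[\eta(y)(A^N_t-A^N_0)]$ produces only terms that are bounded (via $\|\eta\|_{C^2_b}$) multiples of components of $H(A^N_t,A^N_0)$; this is immediate from Assumption \ref{auxiliaryfunction}.
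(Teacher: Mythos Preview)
Your proposal is correct and follows essentially the same route as the paper: both use the product structure $B^N_t=\eta(x)\eta(y)A^N_t$ together with the Leibniz rule and $\eta\in C^3_b$ to reduce $H(B^N_t,B^N_0)$ to $H(A^N_t,A^N_0)$, and then invoke Lemma~\ref{lemma33} and integrate over $\Omega^2$ via Tonelli. The only cosmetic difference is that the paper first squares $H(B^N_t,B^N_0)$ (via Cauchy--Schwarz) and then bounds the individual squared terms, whereas you obtain the linear pointwise bound $|H(B^N_t,B^N_0)|\leq C|H(A^N_t,A^N_0)|$ before squaring; the content is the same.
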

\begin{proof}
    Since $B_t^N(x,y)=\eta(x)\eta(y)A_t^N(x,y)$, by the Cauchy--Schwarz inequality and the assumption $\eta\in C^3_b$, for some constant $C>0$,
    \begin{align}
    \begin{split} \label{50}
        \big|H(B_t^N,B_0^N)(x,y)\big|^2&\leq C \bigg [|B_t^N(x,y)-B_0^N(x,y)|^2 + \sum_{k} |\partial_{y_k}B_t^N(x,y)-\partial_{y_k}B_0^N(x,y)|^2\\
       &\quad + \sum_{k,l} |\partial_{y_k y_l} B_t^N(x,y)-\partial_{y_k y_l} B_0^N(x,y)|^2 \bigg ].
    \end{split}
    \end{align}
By the product rule, we have 
\begin{align}
        \partial_{y_i} B_t^N(x,y)&= \eta(x)A_t^N(x,y) \partial_{y_i}\eta(y)+ \eta(x)\eta(y)\partial_{y_i} A_t^N(x,y),\\
         \partial^2_{y_i y_j} B_t^N(x,y)&=\eta(x)A_t^N(x,y) \partial^2_{y_i y_j}\eta(y)+\eta(x) \partial_{y_j}A_t^N(x,y) \partial_{y_i}\eta(y) \nonumber     \\
         &\quad +\partial_{y_i}A_t^N(x,y) \partial_{y_j}\eta(y)+\eta(x)\eta(y)\partial^2_{y_i y_j} A_t^N(x,y).
\end{align}
As $\eta\in C^3_b$, there exists a constant $C>0$ independent of training time $t$, such that
\begin{align}
    |B_t^N(x,y)-B_0^N(x,y)|&\leq C |A_t^N(x,y)-A_0^N(x,y)|, \label{42}\\
|\partial_{y_i} B_t^N(x,y)-\partial_{y_i} B_0^N(x,y)| &\leq C \bigg [|A_t^N(x,y)-A_0^N(x,y)|+|\partial_{y_i}A_t^N(x,y)-\partial_{y_i}A_0^N(x,y)|\bigg ], \label{43}
\end{align}
and
\begin{align}
\begin{split}
&|\partial^2_{y_i y_j} B_t^N(x,y)-\partial^2_{y_i y_j} B_0^N(x,y)|\\&\leq C \Big [|A_t^N(x,y)-A_0^N(x,y)|+|\partial_{y_i}A_t^N(x,y)-\partial_{y_i}A_0^N(x,y)|\\
        &
    \quad +|\partial_{y_j}A_t^N(x,y)-\partial_{y_j}A_0^N(x,y)| + |\partial^2_{y_i y_j}A_t^N(x,y)-\partial^2_{y_i y_j}A_0^N(x,y)| \Big ]. \label{44}
    \end{split}
\end{align}
Substituting \eqref{42}, \eqref{43} and \eqref{44} in \eqref{50},
    \begin{align}
        \begin{split}
            \big|H(B_t^N,B_0^N)(x,y)\big|^2 &\leq C \bigg [ |A_t^N(x,y)-A_0^N(x,y)|^2+ \sum_{k} |\partial_{y_k}A_t^N(x,y)-\partial_{y_k}A_0^N(x,y)|^2\\
       &\quad + \sum_{k,l}|\partial^2_{y_k y_l}A_t^N(x,y)-\partial^2_{y_k y_l}A_0^N(x,y)|^2
       \bigg ].
        \end{split}
    \end{align}
    By Lemma \ref{error_ata0_expectation}, there exists a constant $C>0$ such that for all $N \in \mathbb{N}$ and any time $0\leq t\leq T$
$        \mathbb{E}_{c_0,w_0,b_0} \big[\big|H(B_t^N,B_0^N)\big|^2\big] \leq CN^{2(\delta+\beta-1)}.
$
Thus, by Tonelli's theorem we have \begin{align}
        \begin{split}
            \mathbb{E}_{c_0,w_0,b_0} \bigg[\int_{\Omega^2} \big|H(B_t^N,B_0^N)(x,y)\big|^2 d\mu(x)d\mu(y)\bigg] &\leq CN^{2(\delta+\beta-1)}.
        \end{split}
    \end{align}
    Multiplying by $N^{2\delta}$ on both sides concludes the result.
\end{proof}
\subsubsection{Difference between $B^N_0(x,y)$ and $B(x,y)$}
We now characterize the difference between the kernels $B^N_0$ and $B$. The law of large numbers implies  $B^N_0$ is converging to $B$ almost surely; here we obtain a bound on its speed of convergence. Again, the proof of the following lemma is given in the appendix.

\begin{lemma} \label{a0a_lemma} There exists $C>0$ such that, for any $j,k,l \in \{1,2,...,n\}$, $\forall$ $N>0$,
 \begin{align}
 \begin{split}
 \mathbb{E}_{c_0,w_0,b_0}\Big[ \big|A_0^N(x,y)-A(x,y)\big|^2\Big] &\leq \frac{C}{N},\\
     \mathbb{E}_{c_0,w_0,b_0}\Big[ \big|\partial_{y_i}A_0^N(x,y)-\partial_{y_i}A(x,y)\big|^2\Big] &\leq \frac{C}{N},\\ 
     \mathbb{E}_{c_0,w_0,b_0}\Big[\big|\partial^2_{y_i y_j}A_0^N(x,y)-\partial^2_{y_i y_j}A(x,y)\big|^2\Big] &\leq \frac{C}{N}.
 \end{split}
\end{align}
\end{lemma}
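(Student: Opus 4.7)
The plan is to view each of the three estimates as a variance bound for a sample mean of $N$ i.i.d.\ random variables, then apply the elementary identity $\mathrm{Var}(\tfrac{1}{N}\sum_i X_i) = \tfrac{1}{N}\mathrm{Var}(X_1)$. Define
$$Z^{(i)}(x,y) := \sigma(w_0^i\cdot x + b_0^i)\sigma(w_0^i\cdot y + b_0^i) + (c_0^i)^2 \sigma'(w_0^i\cdot x + b_0^i)\sigma'(w_0^i\cdot y + b_0^i)(x\cdot y + 1),$$
so that $A_0^N(x,y) = \tfrac{1}{N}\sum_{i=1}^N Z^{(i)}(x,y)$ and, by the definition of $A$, $\mathbb{E}[Z^{(1)}(x,y)] = A(x,y)$. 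By Assumption \ref{initialization} the random variables $\{Z^{(i)}(x,y)\}_i$ are i.i.d., and $|Z^{(i)}(x,y)|$ is uniformly bounded on $\overline{\Omega}\times\overline{\Omega}$ (using $\sigma,\sigma'\in C_b$, $|c_0^i|<K_0$, and $\overline{\Omega}$ bounded). Hence $\mathrm{Var}(Z^{(1)}(x,y))\leq C$ uniformly in $(x,y)$, giving the first bound.

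For the derivatives, the key intermediate step is to commute $y$-differentiation with the expectation defining $A$. The differentiated integrand is a product of bounded factors (derivatives of $\sigma$ up to order three, the bounded variable $c_0^i$, and fixed entries of $x,y\in\overline{\Omega}$) with monomials of degree at most two in the entries of $w_0^i$. By the moment assumptions on $w_0^i$ in Assumption \ref{initialization}, each such term is integrable, so dominated convergence justifies
$$\partial_{y_k}A(x,y) = \mathbb{E}[\partial_{y_k}Z^{(1)}(x,y)], \qquad \partial^2_{y_k y_l}A(x,y) = \mathbb{E}[\partial^2_{y_k y_l}Z^{(1)}(x,y)].$$
Since differentiation also commutes with the finite sum defining $A_0^N$, the derivatives of $A_0^N$ are again sample means of i.i.d.\ random variables with precisely the stated expectations.

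It then remains to bound $\mathrm{Var}(\partial_{y_k}Z^{(1)})$ and $\mathrm{Var}(\partial^2_{y_k y_l}Z^{(1)})$ uniformly in $(x,y)$. Direct computation of the partial derivatives gives estimates of the form
$$|\partial_{y_k} Z^{(1)}|^2 \leq C\bigl(1+(w_0^1)_k^2\bigr), \qquad |\partial^2_{y_k y_l} Z^{(1)}|^2 \leq C\bigl(1+(w_0^1)_k^2(w_0^1)_l^2\bigr),$$
with $C$ independent of $(x,y)$, thanks to the boundedness of $\sigma$ and its derivatives, the a.s.\ bound on $c_0^1$, and the boundedness of $\overline{\Omega}$. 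The moment bounds in Assumption \ref{initialization} (together with Cauchy--Schwarz to handle the cross term $\mathbb{E}[(w_0^1)_k^2(w_0^1)_l^2]$) ensure these expectations are finite uniformly in $(x,y)$. Dividing by $N$ produces the three desired $C/N$ bounds.

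The only part requiring real attention is the algebraic bookkeeping when expanding $\partial_{y_k}Z^{(1)}$ and $\partial^2_{y_k y_l}Z^{(1)}$ and tracking the powers of $(w_0^i)_k$ that appear; everything else is a textbook application of the law of large numbers combined with dominated convergence. There is no hidden difficulty beyond routine estimation.
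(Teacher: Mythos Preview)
Your proposal is correct and follows essentially the same route as the paper: both recognize $A_0^N$ (and its $y$-derivatives) as an i.i.d.\ sample mean with mean $A$ (resp.\ $\partial_{y_k}A$, $\partial^2_{y_ky_l}A$), justify commuting $\partial_y$ with the expectation via dominated convergence, and then bound the variance by $\tfrac{1}{N}$ times a second moment controlled using $\sigma\in C_b^4$, $|c_0|\le K_0$, boundedness of $\overline\Omega$, and the moment assumptions on $w_0$. The only cosmetic slip is that the pointwise bound on $|\partial^2_{y_ky_l}Z^{(1)}|^2$ should also carry the lone $w_k^2$ and $w_l^2$ terms (i.e.\ $C(1+w_k^2)(1+w_l^2)$ rather than $C(1+w_k^2w_l^2)$), exactly as in the paper's expansion; this does not affect the argument.
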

\begin{lemma}\label{l22}There exists $C>0$ such that, for all $N>0$,
\begin{align}
    \mathbb{E}_{c_0,w_0,b_0}\bigg [{N^{2\delta} \int_{\Omega^2}\big|H(B_0^N,B)(x,y)\big|^2 d\mu(x)d\mu(y)}\bigg ] \leq \frac{C}{N^{1-2\delta}}.
\end{align}
\end{lemma}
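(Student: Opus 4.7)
The plan is to mirror the structure of the proof of Lemma \ref{l21}, replacing the role of $A^N_t$ versus $A^N_0$ with that of $A^N_0$ versus $A$, and using Lemma \ref{a0a_lemma} in place of Lemma \ref{error_ata0_expectation}. First I would expand $B^N_0(x,y) - B(x,y) = \eta(x)\eta(y)\bigl(A^N_0(x,y) - A(x,y)\bigr)$ and compute the first and second $y$-partial derivatives of this product using the Leibniz rule, exactly as in the derivation of \eqref{42}--\eqref{44}. Since $\eta \in C^3_b$, each resulting term is a bounded multiple of $A^N_0 - A$ or of one of its first- or second-order $y$-derivatives minus the corresponding derivative of $A$.

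Next I would use the triangle inequality and the Cauchy--Schwarz inequality (to convert a sum of squared bounds into a bound on a squared sum) to obtain a pointwise estimate of the form
\begin{align}
\bigl|H(B^N_0,B)(x,y)\bigr|^2 &\leq C\Bigl[|A^N_0(x,y)-A(x,y)|^2 + \sum_k |\partial_{y_k}A^N_0(x,y)-\partial_{y_k}A(x,y)|^2 \\
&\qquad + \sum_{k,l}|\partial^2_{y_k y_l}A^N_0(x,y)-\partial^2_{y_k y_l}A(x,y)|^2\Bigr],
\end{align}
with $C$ depending only on $n$ and the $C^3_b$-norm of $\eta$.

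Then I would take expectations and apply Lemma \ref{a0a_lemma} termwise, which gives $\mathbb{E}_{c_0,w_0,b_0}\bigl[|H(B^N_0,B)(x,y)|^2\bigr]\leq C/N$ uniformly in $(x,y)\in\Omega^2$. Integrating over $\Omega^2$ (finite under $\mu$) via Tonelli's theorem yields
\begin{equation}
\mathbb{E}_{c_0,w_0,b_0}\Bigl[\int_{\Omega^2}|H(B^N_0,B)(x,y)|^2\,d\mu(x)d\mu(y)\Bigr] \leq \frac{C\mu(\Omega)^2}{N}.
\end{equation}
Multiplying through by $N^{2\delta}$ produces the stated bound $C/N^{1-2\delta}$.

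I do not foresee a genuine obstacle here: the argument is entirely routine once Lemma \ref{a0a_lemma} is available, since $\eta$ is bounded together with enough of its derivatives and the differentiation is only in $y$, so no moment estimates on the weights $w^i_0$ are needed (unlike in Lemma \ref{l21}, where the derivatives of $A$ bring down factors of $w^i$). The only care point is to make sure the constant $C$ absorbs the $C^3_b$-norm of $\eta$ uniformly, and to verify that the dominated convergence theorem lets us exchange differentiation in $y$ with the expectation defining $A$, so that the partial derivatives of $A$ in the statement of Lemma \ref{a0a_lemma} agree with those coming from differentiating $B$ under the integral.
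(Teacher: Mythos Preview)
Your proposal is correct and follows essentially the same route as the paper: expand $B_0^N-B=\eta(x)\eta(y)(A_0^N-A)$, use the $C^3_b$ bound on $\eta$ to reduce $|H(B_0^N,B)|^2$ to a constant times the sum of squared differences of $A_0^N-A$ and its $y$-derivatives (exactly as in \eqref{50}), apply Lemma~\ref{a0a_lemma} pointwise to get an $O(1/N)$ bound on the expectation, and then integrate over $\Omega^2$ via Tonelli. Your remark about moment estimates is accurate at the level of this lemma---the moments of $w_0^i$ enter only inside the proof of Lemma~\ref{a0a_lemma}, where the derivative-in-$y$/expectation interchange is also handled.
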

\begin{proof}
Similar to (\ref{50}), we derive \begin{align}
\begin{split}
    \big|H(B_0^N,B)(x,y)\big|^2 &\leq C \bigg [ |A_0^N(x,y)-A(x,y)|^2 + \sum_{k} |\partial_{y_k}A_0^N(x,y)-\partial_{y_k}A(x,y)|^2\\
       &\quad + \sum_{k,l} |\partial^2_{y_k y_l}A_0^N(x,y)-\partial^2_{y_k y_l}A(x,y)|^2 \bigg ].
\end{split}
\end{align}
Therefore, Lemma \ref{a0a_lemma} guarantees $
        \mathbb{E}_{c_0,w_0,b_0}\big[\big|H(B_0^N,B)(x,y)\big|^2\big] \leq {C}/{N}$.
By Tonelli's theorem, we have \begin{align}
        \begin{split}
    &\mathbb{E}_{c_0,w_0,b_0}\bigg [N^{2\delta} \int_{\Omega^2}\big| H(B_0^N,B)(x,y)\big|^2 d\mu(x)d\mu(y)\bigg]\\&=\int_{\Omega^2}N^{2\delta}\mathbb{E}_{c_0,w_0,b_0} \Big[ \big|H(B_0^N,B)(x,y)\big|^2\Big] d\mu(x)d\mu(y)\leq \frac{C}{N^{1-2\delta}}.
        \end{split}
    \end{align}
\end{proof}
\subsubsection{Difference between kernel $B^N_t(x,y)$ and $B(x,y)$}
Combining the results from the above two subsections, we provide one of our key lemmas.
\begin{lemma}\label{l2h}
The kernels $B_t^N$ and $B$ satisfy
\begin{align} 
   \lim_{N \to \infty}\mathbb{E}_{c_0,w_0,b_0} \bigg [N^{\delta} \int_0^T \int_{\Omega^2}\big|H(B_u^N,B)(x,y)\big|^2 d\mu(x)d\mu(y) du \bigg ] = 0.
\end{align}
\end{lemma}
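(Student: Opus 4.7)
The plan is to combine the two preceding bounds via a triangle inequality for $H$, and then carefully track the resulting powers of $N$. First I would observe that, from the definition in \eqref{48} of $H$ as a sum of absolute differences of a function and its derivatives applied to $A_1 - A_2$, the triangle inequality gives the pointwise estimate $H(B_u^N, B)(x,y) \leq H(B_u^N, B_0^N)(x,y) + H(B_0^N, B)(x,y)$. Squaring and using $(a+b)^2 \leq 2a^2 + 2b^2$, this yields
\begin{equation*}
|H(B_u^N, B)(x,y)|^2 \leq 2|H(B_u^N, B_0^N)(x,y)|^2 + 2|H(B_0^N, B)(x,y)|^2.
\end{equation*}

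Next I would integrate over $\Omega^2$, take expectation, and multiply by $N^{\delta}$. The bounds in Lemmas \ref{l21} and \ref{l22} control $\mathbb{E}[N^{2\delta}\int |H|^2]$, so dividing by $N^{\delta}$ gives bounds on $\mathbb{E}[N^{\delta}\int |H|^2]$. Concretely, Lemma \ref{l21} produces $CN^{3\delta+2\beta-2}$ while Lemma \ref{l22} produces $CN^{\delta-1}$, both uniform in $u \in [0,T]$. Applying Tonelli's theorem to interchange the expectation with the time integral then gives
\begin{equation*}
\mathbb{E}_{c_0,w_0,b_0}\bigg[N^{\delta}\int_0^T\int_{\Omega^2}|H(B_u^N,B)(x,y)|^2\,d\mu(x)d\mu(y)\,du\bigg] \leq CT\big(N^{3\delta+2\beta-2}+N^{\delta-1}\big).
\end{equation*}

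Finally I would verify that both exponents are strictly negative under the constraint $\delta \in (0,(1-\beta)/2)$ from Definition \ref{psi}. The exponent $\delta-1$ is trivially negative since $\delta < (1-\beta)/2 < 1/2$. For the other, $\delta < (1-\beta)/2$ gives $3\delta < 3(1-\beta)/2 < 2(1-\beta)$, hence $3\delta+2\beta-2 < 0$. Both terms therefore vanish as $N\to \infty$, and the lemma follows. I do not anticipate any substantial difficulty: the statement is essentially a bookkeeping synthesis of Lemmas \ref{l21} and \ref{l22}, and the bound $\delta < (1-\beta)/2$ in Definition \ref{psi} appears to have been calibrated precisely so that the dominant contribution $N^{3\delta+2\beta-2}$ decays.
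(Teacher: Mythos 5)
Your proposal is correct and follows essentially the same route as the paper: triangle inequality for $H$, the elementary inequality $(a+b)^2 \le 2a^2+2b^2$, then Lemmas \ref{l21} and \ref{l22}, integration over $[0,T]$, and a check that the exponents are negative under $\delta<(1-\beta)/2$. Your exponent bookkeeping ($N^{3\delta+2\beta-2}$ and $N^{\delta-1}$) is in fact slightly tighter than the paper's displayed bound, which retains the factors from the $N^{2\delta}$-prefactored lemmas without rescaling, but both versions vanish as $N\to\infty$.
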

\begin{proof}
    By the triangle inequality,
    \begin{align}
        H(B_u^N,B)(x,y) \leq H(B_u^N,B_0^N)(x,y)+H(B_0^N,B)(x,y).
    \end{align} Therefore,
    \begin{align}
        \big|H(B_u^N,B)(x,y)\big|^2\leq 2\big|H(B_u^N,B_0^N)(x,y)\big|^2+2\big|H(B_0^N,B)(x,y)\big|^2.
    \end{align}
    Combining Lemmas \ref{l21} and \ref{l22}, we have, for any $0 \leq t \leq T$, \begin{align}
        N^{2\delta}\mathbb{E}_{c_0,w_0,b_0}\bigg[{\int_{\Omega^2}\big|H(B_u^N,B)(x,y)\big|^2 d\mu(x)d\mu(y)}\bigg] \leq CN^{2(2\delta+\beta-1)} +C{N^{2\delta-1}}.
    \end{align} 
    Integrating with respect to time, 
    \begin{align}
    \begin{split}
        &\mathbb{E}_{c_0,w_0,b_0} \bigg [N^{\delta} \int_0^T \int_{\Omega^2}\big|H(B_u^N,B)(x,y)\big|^2 d\mu(x)d\mu(y) du \bigg ]\\& \leq CT (N^{2(2\delta+\beta-1)} + {N^{2\delta-1}}), 
        \end{split}
    \end{align}
    which converges to zero as $N \to \infty$.
\end{proof}
\subsection{Convergence of initial approximator $Q_0^N$ to ${Q}_0$}
In this subsection, we show that the randomly initialized approximator $Q_0^N$ converges to its limit, given by ${Q}_0:= f$ 
as the number of hidden units $N$ goes to infinity.
\begin{lemma} \label{lemma_ini}
The initial approximator $Q^N_0$ satisfies $\lim_{N \to \infty}\mathbb{E}_{c_0,w_0,b_0}\big[ \big\|Q_0^N-{Q}_0\big\|_{\mathcal{H}^2}^2 \big] = 0$.
\end{lemma}
\begin{proof}
   By definition, for any indices $k,l$, 
   \begin{align}
       \begin{split}
           Q^N_0(x)-{Q}_0(x)&=\frac{\eta(x)}{N^\beta}\sum_{i=1}^N c_0^i\,\sigma(w_0^i \cdot x+b_0^i),\\
           \partial_{x_k}(Q^N_0-{Q}_0)(x)&=\frac{\eta(x)}{N^\beta}\sum_{i=1}^N c_0^i\,\sigma'(w_0^i \cdot x+b_0^i)(w_0^i)_k +\frac{\partial_{x_k}\eta(x)}{N^\beta}\sum_{i=1}^N c_0^i\,\sigma(w_0^i \cdot x+b_0^i),
       \end{split}
   \end{align}
   and 
   \begin{align}
       \begin{split}
&\partial^2_{x_k x_l}(Q^N_0-{Q}_0)(x)\\& =\frac{\eta(x)}{N^\beta}\sum_{i=1}^N c_0^i\,\sigma''(w_0^i \cdot x+b_0^i)(w_0^i)_k(w_0^i)_l+\frac{\partial_{x_l}\eta(x)}{N^\beta}\sum_{i=1}^N c_0^i\,\sigma'(w_0^i \cdot x+b_0^i)(w_0^i)_k\\
           &\quad + \frac{\partial_{x_k}\eta(x)}{N^\beta}\sum_{i=1}^N c_0^i\,\sigma'(w_0^i \cdot x+b_0^i)(w_0^i)_l+\frac{\partial^2_{x_k x_l}\eta(x)}{N^\beta}\sum_{i=1}^N c_0^i\,\sigma(w_0^i \cdot x+b_0^i).
       \end{split}
   \end{align}
   As $\{c_0^i,w_0^i,b_0^i\}$ are independent for different $i\in \{1,2,...,N\}$, we have, for a constant $C>0$ which may vary from line to line,
   \begin{align}
       \begin{split}
           \mathbb{E}_{c_0,w_0,b_0}\Big[\big|Q^N_0(x)-{Q}_0(x)\big|^2\Big]&=\frac{\eta(x)^2}{N^{2\beta-1}} \mathbb{E}_{c,w,b}\big[c^2\sigma(w \cdot x+b)^2\big]\leq \frac{C}{N^{2\beta-1}},
        \end{split}
        \end{align}
        and
        \begin{align}
        \begin{split}
           &\mathbb{E}_{c_0,w_0,b_0}\Big[\big|\partial_{x_k}Q^N_0(x)-\partial_{x_k}{Q}_0(x)\big|^2\Big]\\&\leq \frac{2}{N^{2\beta-1}}\mathbb{E}_{c,w,b}\bigg [\eta(x)^2c^2\sigma'(w \cdot x+b)^2w_k^2 +(\partial_{x_k}\eta(x))^2 c^2 \sigma(w \cdot x+b)^2 \bigg ]\leq \frac{C}{N^{2\beta-1}}.
           \end{split}
   \end{align}
   Similarly, 
   \begin{align}\label{secondQderivatx}
       \begin{split}
           \mathbb{E}_{c_0,w_0,b_0}\Big[\big|\partial^2_{x_k x_l}Q^N_0(x)-\partial^2_{x_k x_l}{Q}_0(x)\big|^2\Big]
           \leq \frac{C}{N^{2\beta-1}}.
       \end{split}
   \end{align}
Summing over all indices, then integrating over $\Omega$, gives
$\mathbb{E}_{c_0,w_0,b_0} \Big [\big\|Q_0^N-{Q}_0\big\|_{\mathcal{H}^2}^2 \Big ] \leq CN^{1-2\beta}$.
As $\beta>1/2$, we have the desired convergence.
\end{proof}

\subsection{Convergence of the approximator $Q_t^N$ to ${Q}_t$} \label{sec_33}
\begin{definition}[Wide network limit]\label{widenetworklimit}
For $t\geq 0$, we define the \emph{wide network limit} ${Q}_t$ by the infinite-dimensional ODE
\begin{align} \label{ode}
    \frac{d{Q}_t}{dt}(y)=\alpha \int_{\Omega}\mathcal{L}({Q}_t)(x) B(x,y)d\mu(x), \qquad \text{for all } y \in \overline{\Omega},
\end{align}
with initial value ${Q}_0(y)=f(y)$, for all $y \in \overline{\Omega}$. Equivalently, we can write
\begin{align} \label{limit_ode}
    \frac{dQ_t}{dt} = \alpha \mathcal{B}\mathcal{L}(Q_t), \qquad Q_0 = f.
\end{align}
\end{definition}

\begin{theorem}\label{QODEunique}
The limit ODE (\ref{limit_ode}) admits a unique solution in $\mathcal{H}^2$.
\end{theorem}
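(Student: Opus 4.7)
The plan is to recognize this as a standard Picard--Lindel\"of existence-and-uniqueness result in the Hilbert space $\mathcal{H}^2$, once we verify that the right-hand side $F(u) := \alpha\,\mathcal{B}\mathcal{L}(u)$ is globally Lipschitz as a map $\mathcal{H}^2 \to \mathcal{H}^2$ and that the initial condition lies in $\mathcal{H}^2$.

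First, I would check that $Q_0 = (1-\eta)f$ lies in $\mathcal{H}^2$. By Assumption \ref{continuation}, $f$ (identified with its extension $\bar f$) is in $\mathcal{H}^2$; by Assumption \ref{auxiliaryfunction}, $\eta\in C^3_b(\mathbb{R}^n)$; and Sobolev spaces are closed under multiplication by $C^2_b$ functions, so $Q_0\in \mathcal{H}^2$.

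Next, I would verify global Lipschitz continuity of $F$. By Assumption \ref{assume_1} (and Assumption \ref{assume_LH2} to ensure $\mathcal{L}0\in L^2$, so $\mathcal{L}$ genuinely maps into $L^2$), there exists $C_1$ such that
\begin{equation}
\|\mathcal{L}u - \mathcal{L}v\|_{L^2} \leq C_1 \|u-v\|_{\mathcal{H}^2} \qquad\text{for all } u,v\in \mathcal{H}^2.
\end{equation}
By Lemma \ref{lem:Blip1}, there exists $C_2$ such that $\|\mathcal{B}w\|_{\mathcal{H}^2}\leq C_2\|\eta\cdot w\|_{L^2}\leq C_2\|\eta\|_\infty\|w\|_{L^2}$ for $w\in L^2\subset L^2_\eta$. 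Composing,
\begin{equation}
\|F(u)-F(v)\|_{\mathcal{H}^2} = \alpha\|\mathcal{B}(\mathcal{L}u-\mathcal{L}v)\|_{\mathcal{H}^2} \leq \alpha C_2\|\eta\|_\infty C_1\|u-v\|_{\mathcal{H}^2},
\end{equation}
so $F:\mathcal{H}^2\to\mathcal{H}^2$ is globally Lipschitz.

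Given a globally Lipschitz vector field on the Banach space $\mathcal{H}^2$, the classical Picard iteration argument (applied to the integral equation $Q_t = Q_0+\int_0^t F(Q_s)\,ds$) produces, on any interval $[0,T]$ with $T<\infty$, a unique continuous solution $Q:[0,T]\to \mathcal{H}^2$; since $T$ is arbitrary, this yields a unique global solution on $[0,\infty)$. There is no real obstacle here beyond bookkeeping: the main point is simply to ensure the compositional chain $\mathcal{H}^2 \xrightarrow{\mathcal{L}} L^2 \hookrightarrow L^2_\eta \xrightarrow{\mathcal{B}} \mathcal{H}^2$ is genuinely Lipschitz end-to-end, which is where Assumptions \ref{assume_1}, \ref{assume_LH2} and Lemma \ref{lem:Blip1} combine.
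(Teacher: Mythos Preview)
Your proposal is correct and follows essentially the same approach as the paper: verify that $\mathcal{B}\mathcal{L}:\mathcal{H}^2\to\mathcal{H}^2$ is globally Lipschitz by composing the Lipschitz bounds from Assumption~\ref{assume_1} and Lemma~\ref{lem:Blip1}, then invoke Picard--Lindel\"of. Your version is slightly more detailed (you explicitly check $Q_0\in\mathcal{H}^2$ and spell out the chain $\mathcal{H}^2\to L^2\hookrightarrow L^2_\eta\to\mathcal{H}^2$), but the argument is the same.
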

\begin{proof}
From Lemma \ref{lem:Blip1}, $\mathcal{B}$ is uniformly Lipschitz as a map $L^2 \to \mathcal{H}^2$. By Assumption \ref{assume_1}, $\mathcal{L}$ is uniformly Lipschitz as a map $\mathcal{H}^2 \to L^2$. Hence, the operator $\mathcal{B}\mathcal{L}:\mathcal{H}^2\mapsto \mathcal{H}^2 $ is uniformly Lipschitz.
From the Picard--Lindel\"of theorem (see, for example, Theorem 2.2.1 in \cite{kolokoltsov2019differential}), we know that (\ref{limit_ode}) admits a unique solution with $Q_t \in \mathcal{H}^2$ for all $t\ge 0$. 
\end{proof}

We next prove that $Q^N_t$ converges to ${Q}_t$, justifying the name `wide network limit'.
\begin{theorem}[Convergence to the limit process ${Q}_t$]\label{converge_1}
For any fixed time $0 \leq t \leq T$,
\begin{align}
    \lim_{N \to \infty}\mathbb{E}_{c_0,w_0,b_0}\Big [ \big\|Q_t^N-{Q}_t\big\|_{\mathcal{H}^2} \Big]=0.
\end{align}
In particular, the boundary value of $Q_t$ is given by $Q_t|_{\partial\Omega} =f$.
\end{theorem}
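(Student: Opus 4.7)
The plan is to compare the two ODEs in integrated form, split the error into three manageable pieces, bound each using the tools of Section \ref{sec_preliminary} together with the truncation estimates of Definition \ref{psi}, and close with Gronwall's inequality. First, integrating \eqref{37} and \eqref{limit_ode} and subtracting gives
\begin{equation*}
Q_t^N(y) - Q_t(y) = \bigl(Q_0^N(y) - Q_0(y)\bigr) + \alpha \int_0^t \int_\Omega \Bigl[F^N\bigl(\mathcal{L}Q_s^N(x)\bigr) B_s^N(x,y) - \mathcal{L}Q_s(x)\, B(x,y)\Bigr] d\mu(x)\, ds,
\end{equation*}
and the initial difference is handled by Lemma \ref{lemma_ini}. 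I would then decompose the bracketed integrand as $E_1 + E_2 + E_3$, with
\begin{equation*}
E_1 = F^N(\mathcal{L}Q_s^N)\,(B_s^N - B), \qquad E_2 = \bigl[F^N(\mathcal{L}Q_s^N) - \mathcal{L}Q_s^N\bigr]\, B, \qquad E_3 = \bigl[\mathcal{L}Q_s^N - \mathcal{L}Q_s\bigr]\, B.
\end{equation*}

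For $E_1$, the property $\psi^N(0)=0$ combined with $|(\psi^N)'|\leq 1$ yields $|F^N(z)|\leq |z|$; Cauchy--Schwarz in $x$, together with the derivative structure encoded in $H(B_s^N,B)$ and Lemma \ref{M}, bounds the $\mathcal{H}^2(dy)$-norm of $\int E_1\, d\mu(x)$ by $C\,\|\mathcal{L}Q_s^N\|_{L^2}\bigl(\int_{\Omega^2} H(B_s^N,B)^2 d\mu\, d\mu\bigr)^{1/2}$, and Lemma \ref{l2h} makes this vanish after time integration and expectation (provided $\mathbb{E}\|\mathcal{L}Q_s^N\|_{L^2}^2$ is uniformly bounded). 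For $E_2$, Remark \ref{Ferrorbound} gives $|F^N(z)-z|\leq 2|z|\mathbbm{1}_{|z|\geq N^\delta}$, so Lemma \ref{lem:Blip1} controls the $\mathcal{H}^2$-norm by $C\bigl(\int_\Omega |\mathcal{L}Q_s^N|^2 \mathbbm{1}_{\{|\mathcal{L}Q_s^N|\geq N^\delta\}}\, d\mu\bigr)^{1/2}$, which vanishes as $N\to\infty$ by uniform integrability of $|\mathcal{L}Q_s^N|^2$. For $E_3$, Assumption \ref{assume_1} combined with Lemma \ref{lem:Blip1} yields $\bigl\|\int E_3\, d\mu\bigr\|_{\mathcal{H}^2} \leq C\|Q_s^N - Q_s\|_{\mathcal{H}^2}$. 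Taking $\mathcal{H}^2$-norms, squaring, and taking expectations yields an inequality of the form
\begin{equation*}
\mathbb{E}\|Q_t^N - Q_t\|_{\mathcal{H}^2}^2 \leq \epsilon_N + C\int_0^t \mathbb{E}\|Q_s^N - Q_s\|_{\mathcal{H}^2}^2\, ds
\end{equation*}
with $\epsilon_N \to 0$; Gronwall's inequality and Jensen's inequality then deliver the stated convergence in $\mathbb{E}\|\cdot\|_{\mathcal{H}^2}$.

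The boundary value claim is a direct corollary: for every $N$ and every realization, $Q_t^N = S_t^N\eta + (1-\eta)f$ has trace $f$ on $\partial\Omega$ (by Lemma \ref{boundarytraceproduct} and $\eta|_{\partial\Omega}=0$), so continuity of the trace operator on $\mathcal{H}^1$ together with the $\mathcal{H}^2$-convergence of $Q_t^N$ transfers the trace equality to the deterministic limit $Q_t$.

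The hard part will be the uniform (in $N$ and $s\in[0,T]$) control on $\mathbb{E}\|\mathcal{L}Q_s^N\|_{L^2}^2$, which by Assumption \ref{assume_1} reduces to a uniform bound on $\mathbb{E}\|Q_s^N\|_{\mathcal{H}^2}^2$. Using only $|c_t^i|<C$ from Lemma \ref{bounded_t} gives the crude pointwise estimate $|S_t^N|\lesssim N^{1-\beta}$, which blows up; instead, the argument must exploit the mean-zero initialization $\mathbb{E}[c_0^i]=0$ together with the NTK-scale estimates of Section \ref{sec31} to show that $S_t^N$ remains close to $S_0^N$ in a suitable sense, while $\|S_0^N\|_{\mathcal{H}^2}$ is controlled via independence-based concentration (giving the $O(N^{1/2-\beta})$ scaling familiar from the NTK regime).
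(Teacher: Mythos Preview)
Your overall plan (integrate, decompose, Gronwall) matches the paper, but your decomposition places the intermediate point at $\mathcal{L}Q_s^N$ rather than at $F^N(\mathcal{L}Q_s)$, and this creates precisely the difficulty you flag at the end. The paper instead writes
\[
F^N(\mathcal{L}Q_s^N)B_s^N - (\mathcal{L}Q_s)B
= F^N(\mathcal{L}Q_s^N)\bigl(B_s^N-B\bigr)
+\bigl[F^N(\mathcal{L}Q_s^N)-F^N(\mathcal{L}Q_s)\bigr]B
+\bigl[F^N(\mathcal{L}Q_s)-\mathcal{L}Q_s\bigr]B,
\]
and this small change removes the need for any a priori control on $\mathbb{E}\|Q_s^N\|_{\mathcal{H}^2}^2$. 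In the first term one does \emph{not} use $|F^N(z)|\le |z|$ but rather the crude truncation bound $|F^N|\le 2N^\delta$; this is exactly why Lemma~\ref{l2h} is stated with the extra factor $N^\delta$ in front of $\int H(B_s^N,B)^2$. In the third term the indicator estimate $|F^N(z)-z|\le 2|z|\mathbbm{1}_{\{|z|\ge N^\delta\}}$ is applied to $z=\mathcal{L}Q_s(x)$, which is deterministic and in $L^2$ uniformly on $[0,T]$ by Theorem~\ref{QODEunique}, so dominated convergence disposes of it immediately. The middle term, via the uniform Lipschitz property of $F^N$ and Assumption~\ref{assume_1}, feeds directly into Gronwall with a \emph{deterministic} constant (since the kernel is the fixed $B$, bounded by Lemma~\ref{M}).

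With your splitting, both $E_1$ and $E_2$ carry $\mathcal{L}Q_s^N$. For $E_1$ you then need $\mathbb{E}\|\mathcal{L}Q_s^N\|_{L^2}^2$ bounded uniformly in $N$ and $s\in[0,T]$; for $E_2$ you need more, namely uniform integrability of $|\mathcal{L}Q_s^N(x)|^2$ over $N$ (a bound on the second moment alone does not force $\mathbb{E}\int|\mathcal{L}Q_s^N|^2\mathbbm{1}_{\{|\mathcal{L}Q_s^N|\ge N^\delta\}}\to 0$). Neither estimate is available from Lemma~\ref{bounded_t} or Section~\ref{sec31}, and your sketch (``exploit mean-zero initialization and NTK-scale estimates'') would have to produce a pathwise or high-probability bound on the random operator norm $\|\mathcal{B}_s^N\|_{L^2\to\mathcal{H}^2}$ before a Gronwall-type a priori estimate on $\|Q_s^N\|_{\mathcal{H}^2}$ could close; this is nontrivial because the second $y$-derivatives of $B_s^N$ involve $\frac{1}{N}\sum_k (w_0^k)_i(w_0^k)_j$, which is only controlled in expectation. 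The fix is simply to route the decomposition through $F^N(\mathcal{L}Q_s)$ as above, after which your Gronwall and boundary-trace arguments go through exactly as you wrote them.
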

\begin{proof} 
From the dynamics of $Q^N$ and ${Q}$ (\eqref{37} and \eqref{ode}) we have 
\begin{align}
    Q^N_t(y)-Q_0^N(y)&=\alpha\int_0^t \int_{\Omega}F^N(\mathcal{L}Q^N_u(x))B^N_u(x,y)d\mu(x) du, \label{qn}\\
    {Q}_t(y)-{Q}_0(y)&=\alpha\int_0^t \int_{\Omega}\mathcal{L}{Q}_u(x)B(x,y)d\mu(x) du. \label{qntilde}
\end{align}
Differentiate $Q^N_t(y)$ and ${Q}_t(y)$ with respect to $y_k$ to obtain

\begin{align}
       \partial_{y_k}Q^N_{t}(y)-\partial_{y_k}Q_{0}^N(y)&=\alpha\int_0^t \int_{\Omega}F^N(\mathcal{L}Q^N_u(x))\partial_{y_k}B^N_{u}(x,y)d\mu(x) du, \label{qny}\\
    \partial_{y_k}{Q}_{t}(y)-\partial_{y_k}{Q}_{0}(y)&=\alpha\int_0^t \int_{\Omega}\mathcal{L}{Q}_u(x)\partial_{y_k}B(x,y)d\mu(x) du. \label{qntildey}
\end{align}
Twice-differentiate $Q^N_t$ and ${Q}_t$ with respect to $y_k$ and $y_l$ to obtain:
\begin{align}
    \partial^2_{y_k y_l}Q^N_{t}(y)-\partial^2_{y_k y_l}Q_{0}^N(y)&=\alpha\int_0^t \int_{\Omega}F^N(\mathcal{L}Q^N_u(x))\partial^2_{y_k y_l}B^N_{u}(x,y)d\mu(x) du, \label{qnyy}\\
    \partial^2_{y_k y_l}{Q}_{t}(y)-\partial^2_{y_k y_l}{Q}_{0}(y)&=\alpha\int_0^t \int_{\Omega}\mathcal{L}{Q}_u(x)\partial^2_{y_k y_l}B(x,y)d\mu(x) du \label{qntildeyy}.
\end{align}
Subtracting (\ref{qn}) from (\ref{qntilde}), we have \begin{align}
    \begin{split} \label{d1}
        &|Q_t^N(y)-{Q}_t(y)|-|Q_0^N(y)-{Q}_0(y)|
        \\&\leq \alpha \int_0^t \bigg |\int_{\Omega}F(Q^N_u)(x)B^N_u(x,y)-\mathcal{L}{Q}_u(x)B(x,y)d\mu(x) \bigg | du
        \\
        & \leq \alpha \int_0^t \bigg |\int_{\Omega}F^N(\mathcal{L}Q^N_u(x))\Big(B^N_u(x,y)-B(x,y)\Big)d\mu(x) \bigg | du \\
        &\quad + \alpha \int_0^t \bigg |\int_{\Omega}\Big(F^N(\mathcal{L}Q^N_u(x))-F^N(\mathcal{L}{Q}_u(x)\Big)B(x,y)d\mu(x) \bigg | du\\
        &\quad + \alpha \int_0^t \bigg |\int_{\Omega}\Big(F^N(\mathcal{L}{Q}_u(x))-\mathcal{L}{Q}_u(x)\Big)B(x,y)d\mu(x) \bigg | du.
    \end{split}
\end{align}
Subtracting (\ref{qny}) from (\ref{qntildey}), we have \begin{align}
    \begin{split} \label{d2}
        &|\partial_{y_k}Q_{t}^N(y)-\partial_{y_k}{Q}_{t}(y)|-|\partial_{y_k} Q_0^N(y) - \partial_{y_k}{Q}_0(y)| \\
        &\leq \alpha \int_0^t \bigg |\int_{\Omega}F^N(\mathcal{L}Q^N_u(x))\Big(\partial_{y_k}B^N_{u}(x,y)-\partial_{y_k}B(x,y)(x,y)\Big)d\mu(x)\bigg | du \\
        &\quad + \alpha \int_0^t \bigg |\int_{\Omega}\Big(F^N(\mathcal{L}Q^N_u(x))-F^N(\mathcal{L}{Q}_u(x)\Big)\partial_{y_k}B(x,y)d\mu(x) \bigg | du\\
        &\quad + \alpha \int_0^t \bigg |\int_{\Omega}\Big(F^N(\mathcal{L}{Q}_u(x))-\mathcal{L}{Q}_u(x)\Big)\partial_{y_k}B(x,y)d\mu(x) \bigg | du.
    \end{split}
\end{align}
Similarly, subtracting (\ref{qnyy}) from (\ref{qntildeyy}), we have \begin{align}
    \begin{split} \label{d3}
        &|\partial^2_{y_k y_l}Q_{t}^N(y)-\partial^2_{y_k y_l}{Q}_{t}(y)|- |\partial^2_{y_k y_l} Q_0^N(y)-\partial^2_{y_k y_l}{Q}_0(y)|
        \\
        &\leq \alpha \int_0^t \bigg |\int_{\Omega}F^N(\mathcal{L}Q^N_u(x))\Big(\partial^2_{y_k y_l}B^N_{u}(x,y)-\partial^2_{y_k y_l}B(x,y)(x,y)\Big)d\mu(x)\bigg | du \\
        &\quad+ \alpha \int_0^t \bigg |\int_{\Omega}\Big(F^N(\mathcal{L}Q^N_u(x))-F^N(\mathcal{L}{Q}_u(x)\Big)\partial^2_{y_k y_l}B(x,y)d\mu(x) \bigg | du\\
        &\quad+ \alpha \int_0^t \bigg |\int_{\Omega}\Big(F^N(\mathcal{L}{Q}_u(x))-\mathcal{L}{Q}_u(x)\Big)\partial^2_{y_k y_l}B(x,y)d\mu(x) \bigg | du.
    \end{split}
\end{align}

\noindent As stated in Definition \ref{psi}, the function $F^N=(\psi^N)\cdot (\psi^{N})'$ satisfies a global Lipschitz condition. Therefore, for some $C>0$,
\begin{align}
\begin{split}
    &\Big|F^N(\mathcal{L}Q^N_t)(x)-F^N(\mathcal{L}{Q}_t)(x))\Big| <C\Big|\mathcal{L}Q^N_t(x)-\mathcal{L}{Q}_t(x)\Big|\\
    &\leq C\bigg ( |Q^N_t(x)-{Q}_t(x)|+
    \sum_k |\partial_{x_k}Q_t^N(x)-\partial_{x_k}{Q}_t(x)| +\sum_{k,l}|\partial^2_{x_k x_l}Q_t^N(x)-\partial^2_{x_k x_l}{Q}_t(x)| \bigg )\\
    &= C G(Q_t^N,{Q}_t)(x).
\end{split}
\end{align}
Summing (\ref{d1}), (\ref{d2}) and (\ref{d3}) over all indices $k$ and $l$, we have 
\begin{align}\begin{split} \label{step_1}
&G(Q_t^N,{Q}_t)(y)\\&=|Q_t^N(y)-{Q}_t(y)|
    +\sum_k |\partial_{x_k}Q_t^N(y)-\partial_{x_k}{Q}_t(y)| +\sum_{k,l}|\partial^2_{x_k x_l}Q_t^N(y)-\partial^2_{x_k x_l}{Q}_t(y)|\\
    &\leq G(Q_t^N,{Q}_0)(y)    +CN^{\delta} \int_0^t \int_\Omega H(B_u^N,B)(x,y) d\mu(x) du \\
    &\quad + C\int_0^t \int_\Omega  G(Q_u^N,{Q}_u)(x)H(B,0)(x,y) d\mu(x) du \\
    &\quad + C\int_0^t \int_\Omega  |F^N(\mathcal{L}{Q}_u(x))-\mathcal{L}{Q}_u(x)|H(B,0)(x,y) d\mu(x) du.
\end{split}
\end{align}
Here, the coefficient $N^\delta$ comes from the term $F^N$, as by Assumption \ref{psi}, $|F^N|=|(\psi^N)\cdot (\psi^N)'|\leq 2 N^\delta$. As mentioned in Remark \ref{Ferrorbound}, we also know $|F^N(x)-x|\leq 2|x|\mathbbm{1}_{\{|x|\geq N^\delta\}}$.  As $H(B,0)$ is uniformly bounded (Lemma \ref{M}), from inequality (\ref{step_1}), for some $C>0$ we have 
\begin{align}\begin{split} \label{step_2}
    G(Q_t^N,{Q}_t)(y)&\leq G(Q_t^N,{Q}_0)(y)    +CN^{\delta} \int_0^t \int_\Omega H(B_u^N,B)(x,y) d\mu(x) du \\
    &\quad + C\int_0^t \int_\Omega  G(Q_u^N,{Q}_u)(x)d\mu(x) du \\
    &\quad + C\int_0^t \int_\Omega  |\mathcal{L}{Q}_u(x)|\mathbbm{1}_{\{|\mathcal{L}{Q}_u(x)|\geq N^{\delta}\}} d\mu(x) du.
\end{split}
\end{align}
By multiplying $G(Q_t^N,{Q}_t)(y)$ in inequality (\ref{step_2}), integrating on both sides, and applying the Cauchy--Schwarz inequality, from (\ref{step_2}) we derive 
\begin{align}
    \begin{split}
    \int_\Omega \big|G(Q_t^N,{Q}_t)(y)\big|^2 d\mu(y)    &\leq C \bigg ({\int_\Omega \big|G(Q_t^N,{Q}_t)(y)\big|^2 d\mu(y)}\bigg )^{\frac{1}{2}} \Bigg [ \bigg( \int_\Omega \big|G(Q_0^N,{Q}_0)(y)\big|^2 d\mu(y) \bigg )^{\frac{1}{2}}
    \\&\quad + N^{\delta} \int_0^t \bigg ({\int_{\Omega^2}\big|H(B_u^N,B)(x,y)\big|^2 d\mu(x)d\mu(y)}\bigg )^{\frac{1}{2}} du \\
    &\quad +  \int_0^t {\int_\Omega \big|G(Q_u^N,{Q}_u)(x)\big|^2d\mu(x)}du \\
    &\quad +\int_0^t \int_\Omega |\mathcal{L}{Q}_u(x)|\mathbbm{1}_{\{|\mathcal{L}{Q}_u(x)|\geq N^{\delta}\}} d\mu(x)du \Bigg ].
    \end{split}
\end{align}
We write
\begin{align}\label{Jdef}
\begin{split}
    J^N&:= \bigg ( \int_\Omega \big|(Q_0^N,{Q}_0)(y)\big|^2 d\mu(y) \bigg )^{\frac{1}{2}}  + N^\delta \int_0^T \int_{\Omega^2}\big|H(B_u^N,B)(x,y)\big|^2 d\mu(x)d\mu(y) du\\
    &\quad +\int_0^T \int_\Omega |\mathcal{L}{Q}_u(x)|\mathbbm{1}_{\{|\mathcal{L}{Q}_u(x)|\geq N^{\delta}\}}d\mu(x)du.
\end{split}
\end{align}
Then the inequality (\ref{step_2}) can be formulated as
\begin{align} \label{step_3}
    \bigg ({\int_\Omega \big|G(Q_t^N,{Q}_t)(y)\big|^2 d\mu(y)}\bigg )^{\frac{1}{2}} \leq J^N + C \int_0^t \bigg ({\int_\Omega \big|G(Q_u^N,{Q}_u)(x)\big|^2d\mu(x)}\bigg )^{\frac{1}{2}}du.
\end{align}
Since 
\begin{align}
\begin{split}
    G(Q_t^N,{Q}_t)(y)&= |Q_t^N(y) - {Q}_t(y)| + \sum_k |\partial_{y_k}(Q_t^N - {Q}_t)(y)|+ \sum_{k,l}|\partial^2_{y_k, y_l}(Q_t^N - {Q}_t)(y)|,
\end{split}
\end{align}
by a simple quadratic-mean inequality we have 
\begin{align} \label{qm}
   \|Q_t^N - {Q}_t\|^2_{\mathcal{H}^2} \leq \int_\Omega \big|G(Q_t^N,{Q}_t)(y)\big|^2 d\mu(y) \leq (n^2+n+1)\|Q_t^N - {Q}_t\|^2_{\mathcal{H}^2}.
\end{align}
Consequently, from (\ref{step_3}) we derive 
\begin{align} \label{key_ineq}
     \|Q_t^N - {Q}_t\|_{\mathcal{H}^2}\leq J^N + C\int_0^t \|Q_u^N - {Q}_u\|_{\mathcal{H}^2}du.
\end{align}
By applying Gr\"onwall's inequality, (\ref{key_ineq}) yields the exponential bound
$    \int_0^t \|Q_u^N - {Q}_u\|_{\mathcal{H}^2}\, du\leq \frac{J^N}{C} (e^{Ct}-1)$.

It remains to bound $J^N$ in \eqref{Jdef}, as $N$ goes to infinity. By Lemma \ref{lemma_ini} and (\ref{qm}),  $\mathbb{E}_{c_0,w_0,b_0}\big[ \int_\Omega \big|G(Q_0^N,{Q}_0)(y)\big|^2 d\mu(y) \big] \to 0$. By the dominated convergence theorem, we obtain
$\int_0^T \int_\Omega |\mathcal{L}{Q}_u(x)|\mathbbm{1}_{\{|\mathcal{L}{Q}_u(x)|\geq N^{\delta}\}}d\mu(x)du \to 0$. Finally, by Lemma \ref{l2h}, we show that $\mathbb{E}_{c_0,w_0,b_0}\big[N^{\delta}\int_0^T  {\int_{\Omega^2}\big|H(B_u^N,B)(x,y)\big|^2 d\mu(x)d\mu(y)} du\big] \to 0$. 
Therefore, $\mathbb{E}_{c_0,w_0,b_0}[J^N] \to 0$ as $N\to \infty$  and consequently, from the above exponential bound,
\begin{align} \label{path_convergence}
    \lim_{N \to \infty}\mathbb{E}_{c_0,w_0,b_0}\Big[\int_0^T \|Q_u^N - {Q}_u\|_{\mathcal{H}^2} \,du\Big]=0.
\end{align}
Substituting this result back into inequality (\ref{key_ineq}) and taking the expectation with respect to the random initialization leads to
$\lim_{N \to \infty} \mathbb{E}_{c_0,w_0,b_0}\big [\|Q_t^N - {Q}_t\|_{\mathcal{H}^2}\big ]=0$ for any time $t \leq T$.

Finally, observe that $Q^N_t|_{\partial\Omega} =f$ by construction, so almost sure convergence in $\mathcal{H}^2$ (for a subsequence of $N$) and continuity of the boundary trace operator imply $Q_t|_{\partial\Omega}=f$.
\end{proof}

\section{Convergence for large training time}\label{sec_lim}
In the previous section, we showed that, as the neural network is made wider, its training process converges to a process that satisfies an infinite-dimensional ODE \eqref{limit_ode}. In particular, the wide-limit of the approximate solution has the dynamics $\frac{dQ_t}{dt} = \alpha \mathcal{B}\mathcal{L}(Q_t)$. This limit process can therefore be regarded as an approximation of the setting where we use a large (single-layer) neural network, and our problem (in the limiting setting) is transformed into the study of this infinite-dimensional dynamical system.  The following theorem is an easy consequence.
\begin{theorem}\label{fixedpointsolution}
If the wide-network limit converges in $\mathcal{H}^1$ to a fixed point $Q$, then $Q$ is a solution to the PDE \eqref{pde}.
\end{theorem}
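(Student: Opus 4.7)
The plan is to decompose the statement ``$Q$ solves \eqref{pde}'' into verifying the boundary condition $Q|_{\partial\Omega}=f$ and the interior equation $\mathcal{L}Q=0$, handling each separately. The boundary condition will follow cheaply from the construction of $Q_t$ and continuity of the trace operator, while the interior equation will be extracted from the fixed-point relation $\mathcal{B}\mathcal{L}(Q)=0$ via injectivity of $\mathcal{B}$.

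For the boundary, recall from Theorem \ref{converge_1} that $Q_t|_{\partial\Omega}=f$ for every $t\geq 0$. Since the trace operator $T:\mathcal{H}^1(\Omega)\to L^2(\partial\Omega)$ is continuous and $Q_t\to Q$ in $\mathcal{H}^1$ by hypothesis, I would just pass to the limit: $T(Q)=\lim_t T(Q_t)=f$.

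For the interior equation, the fixed-point hypothesis together with the limit dynamics \eqref{limit_ode} gives $\alpha\mathcal{B}\mathcal{L}(Q)=0$, hence $\mathcal{B}\mathcal{L}(Q)=0$ in $L^2$. Since $\mathcal{L}Q\in L^2\subset L^2_\eta$, I would apply Lemma \ref{lem:BLip}(\emph{i}): pairing against $\mathcal{L}Q$ yields
\begin{equation}
\|\mathcal{L}Q\|_{\mathcal{B}}^2=\langle \mathcal{B}\mathcal{L}Q,\mathcal{L}Q\rangle_{L^2}=0,
\end{equation}
and because $\|\cdot\|_{\mathcal{B}}$ is a genuine \emph{norm} on $L^2_\eta$, this forces $\mathcal{L}Q=0$ in $L^2_\eta$. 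Since $\eta>0$ throughout $\Omega$, this is equivalent to $\mathcal{L}Q=0$ almost everywhere on $\Omega$, completing the verification.

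The only non-routine ingredient is the injectivity of $\mathcal{B}$ packaged in the strict positive-definiteness of Lemma \ref{lem:BLip}(\emph{i}), which itself rests on the universal approximation result of Hornik via Lemma \ref{dis_I}; beyond that the argument is essentially a one-line trace limit plus a one-line inner-product computation. The only subtlety worth flagging is that the fixed-point hypothesis implicitly requires $Q$ to lie in the domain of $\mathcal{L}$, i.e.\ $Q\in\mathcal{H}^2$, so that $\mathcal{B}\mathcal{L}(Q)$ is meaningful; the assumed $\mathcal{H}^1$ convergence is used only for the boundary-trace step.
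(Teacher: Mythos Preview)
Your proposal is correct and follows essentially the same approach as the paper: the paper also deduces $\mathcal{L}Q=0$ from $\mathcal{B}\mathcal{L}(Q)=0$ via Lemma \ref{lem:BLip}, and obtains the boundary condition by passing the trace through the $\mathcal{H}^1$ limit. Your version is slightly more explicit in spelling out the inner-product computation $\|\mathcal{L}Q\|_{\mathcal{B}}^2=0$ and in flagging the implicit requirement $Q\in\mathcal{H}^2$; the paper's proof is terser but identical in substance.
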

\begin{proof}
For a fixed point $Q$, we know $\mathcal{B}\mathcal{L}(Q)=0$. From Lemma \ref{lem:BLip}, this implies that $\mathcal{L}(Q)=0$, so the PDE dynamics are satisfied. As $Q^N$ satisfies the boundary conditions $Q^N_t|_{\partial \Omega} = f$, and $Q^N_t\to Q_t$ in $\mathcal{H}^2$ for each $t$, and $Q_t\to Q$ in $\mathcal{H}^1$ we see that $Q$ must satisfy the boundary condition also (by $\mathcal{H}^1$-continuity of the boundary trace operator). 
\end{proof}

In this section, we will consider the simple case where $\mathcal{L}$ is a monotone operator. This implies that the dynamical system $dv/dt = \mathcal{L}v$ converges in $L^2$ exponentially quickly, which suggests the dynamics of $Q$ will be well behaved. However, the presence of the operator $\mathcal{B}$ leads to some difficulties in our analysis. Nevertheless, we will show that, in this setting, our approximation $Q_t$ converges to the true solution $u$ of the PDE, at least for a generic subsequence of times. 

We assume $\alpha=1$ in \eqref{alphadef} for notational simplicity in this section, without loss of generality (as this is a simple rescaling of time).

\begin{theorem}\label{thm:Qconvergence}
Let $u$ be the solution to the PDE \eqref{pde} under our assumptions (Assumption \ref{PDEAssumptions}, now including part \textit{iv}), and assume the neural network configuration satisfies Assumptions \ref{assume_activation} and \ref{initialization}. Then the wide network limit $\{Q_t\}_{t\ge 0}$ of the Q-PDE algorithm (Definitions \ref{neuralQlearningAlgo}, \ref{widenetworklimit}) satisfies \begin{equation}
    \frac{1}{t}\int_0^t\|Q_s-u\|^2_{L^2} ds\to 0 \text{ as }t\to \infty.
\end{equation}
In particular, there exists a set $A\subset[0, \infty)$ with $\lim_{t\to \infty}\big(t^{-1}\mathrm{Leb}(A\cap[0,t])\big)= 0$ such that, for all sequences $t_k\to \infty$ which do not take values in $A$, we have the (strong) $L^2$ convergence 
\[\|Q_{t_k}-u\|_{L^2} \to 0.\]
\end{theorem}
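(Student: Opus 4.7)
The plan is to prove the Cesaro statement $T^{-1}\int_0^T\|Q_s-u\|^2_{L^2}\,ds\to 0$ via a Lyapunov-style argument in the (formal) $\mathcal{B}^{-1}$-norm, made rigorous through an integrated identity and a density approximation. Set $p_t:=Q_t-u$; by the boundary conditions $p_t\in\mathcal{H}^2_{(0)}$, and from the wide-network ODE together with $\mathcal{L}u=0$ we have $\dot p_t=\mathcal{B}(\mathcal{L}Q_t-\mathcal{L}u)$, while Assumption~\ref{Lmonotone} supplies $\langle p_t,\mathcal{L}Q_t-\mathcal{L}u\rangle\le -\gamma\|p_t\|^2_{L^2}$. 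The heuristic Lyapunov $V_t=\tfrac{1}{2}\langle p_t,\mathcal{B}^{-1}p_t\rangle$ would give $\dot V_t\le -\gamma\|p_t\|^2_{L^2}$ and hence $\gamma\int_0^T\|p_s\|^2\,ds\le V_0$, but $V_0$ is typically infinite since $p_0=(1-\eta)f-u$ need not lie in $\mathcal{B}^{1/2}(L^2)$.

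To avoid inverting $\mathcal{B}$, I set $\phi_s:=\mathcal{L}Q_s-\mathcal{L}u$ and $W_T:=\int_0^T\phi_s\,ds$, so that $\mathcal{B}W_T=p_T-p_0$. Substituting $p_s=p_0+\int_0^s\mathcal{B}\phi_u\,du$ into $\int_0^T\langle p_s,\phi_s\rangle\,ds$ and applying Fubini together with the self-adjointness of $\mathcal{B}$ (which renders $\langle\mathcal{B}\phi_u,\phi_s\rangle$ symmetric in $u,s$) yields the identity
\begin{equation*}
\int_0^T\langle p_s,\phi_s\rangle\,ds \;=\; \langle p_0,W_T\rangle+\tfrac{1}{2}\|W_T\|^2_{\mathcal{B}}.
\end{equation*}
Monotonicity applied to the LHS, together with a Cauchy--Schwarz step $|\langle\mathcal{B}z,W_T\rangle_{L^2}|=|\langle z,W_T\rangle_{\mathcal{B}}|\le\|z\|_{\mathcal{B}}\|W_T\|_{\mathcal{B}}$ followed by Young's inequality to cancel $\tfrac{1}{2}\|W_T\|^2_{\mathcal{B}}$ from both sides, gives for any $z\in L^2_\eta$
\begin{equation*}
\gamma\int_0^T\|p_s\|^2_{L^2}\,ds \;\le\; \tfrac{1}{2}\|z\|^2_{\mathcal{B}}+\|p_0-\mathcal{B}z\|_{L^2}\,\|W_T\|_{L^2}.
\end{equation*}
The density of $\operatorname{im}(\mathcal{B})$ in $\mathcal{H}^2_{(0)}$ (Theorem~\ref{thm:densityofimB}) then lets me pick $z_\delta$ with $\|p_0-\mathcal{B}z_\delta\|_{L^2}\le\delta$ arbitrarily small, at the cost of possibly large (but finite) $\|z_\delta\|_{\mathcal{B}}$; dividing by $T$ and sending $T\to\infty$ then $\delta\to 0$ gives the Cesaro conclusion \emph{provided} $\|W_T\|_{L^2}$ grows at most linearly in $T$.

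The main obstacle is precisely this last linear-growth control: Lipschitz continuity of $\mathcal{L}$ (Assumption~\ref{assume_1}) only gives $\|W_T\|_{L^2}\le C\int_0^T\|p_s\|_{\mathcal{H}^2}\,ds$, so a separate bootstrap bounding $\|p_s\|_{\mathcal{H}^2}$ on average is needed; this I would obtain via an $\mathcal{H}^2$ energy estimate using the smoothing property $\mathcal{B}\colon L^2\to\mathcal{H}^2$ (Lemma~\ref{lem:Blip1}) combined with monotonicity to rule out exponential growth. Once $T^{-1}\int_0^T\|p_s\|^2_{L^2}\,ds\to 0$ is established, the exceptional-set claim is a standard density argument: for each $k\in\mathbb{N}$, $A_k:=\{s\ge 0:\|p_s\|^2_{L^2}\ge 1/k\}$ has zero asymptotic density by Markov's inequality, and a diagonal choice of thresholds $T_k\uparrow\infty$ with $A:=\bigcup_k (A_k\cap[T_k,T_{k+1}))$ produces a single set of zero density such that any sequence $t_j\to\infty$ avoiding $A$ satisfies $\|Q_{t_j}-u\|_{L^2}\to 0$.
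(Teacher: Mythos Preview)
Your integrated identity and the subsequent inequality are correct, and the plan of using density of $\operatorname{im}(\mathcal{B})$ to make $\|p_0-\mathcal{B}z\|$ small is the right idea. The genuine gap is the step you flag yourself: establishing $\|W_T\|_{L^2}=O(T)$. Your proposed bootstrap does not close. The only a priori $\mathcal{H}^2$ control on $p_t=Q_t-u$ comes from $\|\dot p_t\|_{\mathcal{H}^2}=\|\mathcal{B}\phi_t\|_{\mathcal{H}^2}\le C\|\phi_t\|_{L^2}\le C\|p_t\|_{\mathcal{H}^2}$ (Lemma~\ref{lem:Blip1} and Assumption~\ref{assume_1}), which via Gr\"onwall yields only exponential growth. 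Monotonicity (Assumption~\ref{Lmonotone}) is a purely $L^2$ statement about $\langle p_t,\phi_t\rangle$; it says nothing about $\langle p_t,\mathcal{B}\phi_t\rangle_{\mathcal{H}^2}$ or even $\langle p_t,\mathcal{B}\phi_t\rangle_{L^2}$, so it cannot be combined with the smoothing estimate to damp the $\mathcal{H}^2$ norm. Since $\|W_T\|_{L^2}\le C\int_0^T\|p_s\|_{\mathcal{H}^2}\,ds$ is the only route from the hypotheses to $\|W_T\|_{L^2}$, the argument stalls here.

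The paper sidesteps this entirely, by a rearrangement that is closely related to yours (indeed $v_t-v_0=W_t$) but pairs the residual differently. It introduces an auxiliary process $v_t$ solving $\dot v_t=\mathcal{L}(\mathcal{B}v_t+u+g)$ with $g:=p_0-\mathcal{B}v_0$, so that $Q_t=\mathcal{B}v_t+u+g$ identically. The Lyapunov function $\|v_t\|^2_{\mathcal{B}}$ then obeys
\[
\frac{d}{dt}\|v_t\|^2_{\mathcal{B}}
=2\langle\mathcal{B}v_t,\mathcal{L}(\mathcal{B}v_t+u+g)-\mathcal{L}(u+g)\rangle
+2\langle\mathcal{B}v_t,\mathcal{L}(u+g)-\mathcal{L}u\rangle,
\]
where monotonicity bounds the first bracket by $-2\gamma\|\mathcal{B}v_t\|^2_{L^2}$ and Lipschitz continuity bounds the second by $C\|\mathcal{B}v_t\|_{L^2}\,\|g\|_{\mathcal{H}^2}$, which is absorbed via Young's inequality. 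The crucial difference from your scheme is that the residual $g$ ends up paired with $\|\mathcal{B}v_t\|_{L^2}$ (itself controlled by the Lyapunov function) rather than with $\|W_T\|_{L^2}$; this produces an error term that is \emph{constant in~$t$}, and integrating gives $\int_0^T\|\mathcal{B}v_s\|^2_{L^2}\,ds\le C\|v_0\|^2_{\mathcal{B}}+CT\|g\|^2_{\mathcal{H}^2}$ directly, with no need for any a priori bound on $\|p_t\|_{\mathcal{H}^2}$ or $\|W_T\|_{L^2}$. Your exceptional-set construction at the end is fine and equivalent in spirit to the paper's.
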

\begin{proof} We begin by supposing we have a decomposition
\begin{equation}\label{Qspatialdecomposition}
    Q_0 = f = u + \mathcal{B}v_0 + g,
\end{equation}
for some $v_0\in L^2_\eta$ and {$g\in \mathcal{H}^2$}. As {$u\in \mathcal{H}^2$}, we can choose $v_0$ arbitrarily and then find $g$ using \eqref{Qspatialdecomposition}. Consider the process $v$ solving the ODE
    \begin{equation}\frac{dv_t}{dt} = \mathcal{L}(\mathcal{B}v_t + u +g)\end{equation}
    with initial value $v_0$ as in \eqref{Qspatialdecomposition}. As detailed in Appendix \ref{lem:ODEexist}, as $\mathcal{B}$ and $\mathcal{L}$ are both Lipschitz continuous in appropriate spaces, a standard Picard--Lindel\"of argument shows that  $v$ is uniquely defined in $L^2_\eta$, for all $t\ge 0$.
    
    By differentiating $\mathcal{B}v_t$ (and using dominated convergence to move the derivative through $\mathcal{B})$, we obtain 
    \begin{equation}\frac{d(\mathcal{B}v_t+u+g)}{dt} = \frac{d(\mathcal{B}v_t)}{dt} =  \mathcal{B}\mathcal{L}(\mathcal{B}v_t + u + g); \qquad \mathcal{B}v_0+u+g = Q_0.\end{equation}
    In particular, as the ODE defining $Q$ has a unique solution in $\mathcal{H}^2$ (Theorem \ref{QODEunique}), we have the identity $Q_t = \mathcal{B}v_t +u+g$.
    
    Using the Young inequality $2\langle x,y\rangle \leq \gamma \|x\|^2 + (4/\gamma) \|y\|^2$ for $\gamma >0$, together with the fact $\mathcal{L}(u)=0$ and our assumption that $\mathcal{L}$ is strongly monotone and Lipschitz, for some constant $k>0$ we have
    \begin{align}
        \begin{split}
 \frac{d}{dt}\|v_t\|^2_\mathcal{B} &= \frac{d}{dt} \langle \mathcal{B}v_t, v_t\rangle = 2 \langle \mathcal{B}v_t, \mathcal{L}(\mathcal{B}v_t +u+g)\rangle\\
    &= 2 \langle \mathcal{B}v_t, \mathcal{L}(\mathcal{B}v_t +u+g) - \mathcal{L}(u+g)\rangle + 2 \langle \mathcal{B}v_t, \mathcal{L}(u+g) - \mathcal{L}(u)\rangle\\
 &\leq -2\gamma \|\mathcal{B}v_t\|^2_{L^2} + \gamma\|\mathcal{B}v_t\|^2_{L^2} + \frac{4}{\gamma}\|\mathcal{L}(u+g) - \mathcal{L}(u)\|^2_{L^2}\\
    &\leq -\gamma \|\mathcal{B}v_t\|^2_{L^2} + \gamma k\|g\|^2_{\mathcal{H}^2}.
        \end{split}
    \end{align}
 From Lemma \ref{lem:BLip} there exists $\lambda>0$ such that, for all $v\in L^2_\eta$, $\|\mathcal{B} v_t \|^2_{L^2} \leq \lambda \|v_t\|_{\mathcal{B}}$.
     Using this value of $\lambda$, as $\|g\|^2_{\mathcal{H}^2}$ is a constant,
    $\frac{d}{dt} \big(\lambda\|v_t\|^2_\mathcal{B} - k \|g\|^2_{\mathcal{H}^2}\big) \leq -\gamma\lambda\big(\|\mathcal{B}v_t\|^2_{L^2} - k \|g\|^2_{\mathcal{H}^2}\big)$.    Using the definition of $\lambda$ and integrating,
    \begin{align}
    \begin{split}
        \Big(\|\mathcal{B}v_t\|^2_{L^2} - k \|g\|^2_{\mathcal{H}^2}\Big) &\leq \Big(\lambda\|v_t\|^2_\mathcal{B} - k \|g\|^2_{\mathcal{H}^2}\Big) \\
        &\leq \Big(\lambda\|v_0\|^2_\mathcal{B} - k \|g\|^2_{\mathcal{H}^2}\Big) -\gamma\lambda\int_0^t \Big(\|\mathcal{B}v_s\|^2_{L^2} - k \|g\|^2_{\mathcal{H}^2}\Big)ds.
        \end{split}
    \end{align}
    Writing $h_t = \int_0^t \big(\|\mathcal{B}v_s\|^2_{L^2} - k \|g\|^2_{\mathcal{H}^2}\big)ds$, we see 
    ${dh_t}/{dt} \leq (\lambda\|v_0\|^2_\mathcal{B} - k \|g\|^2_{\mathcal{H}^2}) -\gamma\lambda h_t$,
    and hence Gr\"onwall's inequality yields
    \begin{equation}h_t = \int_0^t \Big(\|\mathcal{B}v_s\|^2_{L^2} - k \|g\|^2_{\mathcal{H}^2}\Big)ds \leq (\lambda\|v_0\|^2_\mathcal{B} - k \|g\|^2_{\mathcal{H}^2})\int_0^t e^{- \gamma\lambda(t-s)}ds.\end{equation}
    Recalling that $Q_t-u = \mathcal{B}v_t + g$ and \begin{align}
        \|\mathcal{B}v+g\|^2_{L^2}  \leq 2\|\mathcal{B}v\|^2_{L^2} + 2\|g\|^2_{\mathcal{H}^2} = 2(\|\mathcal{B}v\|^2_{L^2} - k\|g\|^2_{\mathcal{H}^2}) + (2k+2)\|g\|^2_{\mathcal{H}^2},
    \end{align}
    we conclude that, for some constant $C>0$,
    \begin{equation}\label{Qsquarebound}
    \int_0^t\|Q_s-u\|^2_{L^2} ds = \int_0^t\|\mathcal{B}v_s+g\|^2_{L^2}ds \leq Ct\|g\|_{\mathcal{H}^2} + C\|v_0\|^2_{\mathcal{B}}.
    \end{equation}

This inequality must hold for all choices of $g$ and $v_0$ satisfying \eqref{Qspatialdecomposition}, and we can choose them to optimize \eqref{Qsquarebound}. We observe that $Q_0-u \in \mathcal{H}^2_{(0)}$, as $Q_0, u\in \mathcal{H}^2$ and they have the same boundary value. For every $\epsilon>0$, by Theorem \ref{thm:densityofimB}, there exists a choice of $v^{(\epsilon)}_0\in L^2_\eta$ such that 
$\|Q_0 - u - \mathcal{B}v^{(\epsilon)}_0\|_{\mathcal{H}^2}\leq \epsilon$. We further recall that $\|v_0^{(\epsilon)}\|^2_{\mathcal{B}}<\infty$ for $v_0^{(\epsilon)}\in L^2_\eta$. 

Defining $g^{(\epsilon)}=Q_0 - u - \mathcal{B}v^{(\epsilon)}_0$, \eqref{Qsquarebound} yields the limit as $t\to \infty$
\begin{align}\frac{1}{t}\int_0^t\|Q_s-u\|^2_{L^2} ds \leq C\|g^{(\epsilon)}\|^2_{\mathcal{H}^2} + \frac{C}{t}\|v_0^{(\epsilon)}\|^2_{\mathcal{B}}\to C\epsilon.\end{align}
As $\epsilon>0$ was arbitrary, we conclude that
$
    \lim_{t\to \infty}\big\{\frac{1}{t}\int_0^t\|Q_s-u\|^2_{L^2} ds\big\} = 0$.

To obtain the final statement, set
$\beta(t) = \sup_{T\ge t} \big\{\frac{1}{T}\int_0^T\|Q_s-u\|^2_{L^2} ds\big\}$.
Observe that $\beta$ is a nonincreasing positive function with $\lim_{t\to\infty}\beta(t)= 0$.
Defining the set $A = \big\{t:\|Q_t-u\|^2> \sqrt{\beta(t)}\big\}$,
Markov's inequality yields
\begin{align}
\begin{split}
    \frac{1}{t}\mathrm{Leb}(A\cap[0,t])
    &=\frac{1}{t}\mathrm{Leb}\Big(\Big\{s\leq t: \frac{\|Q_s-u\|^2_{L^2}}{\sqrt{\beta(s)}}>1\Big\}\Big) \leq \frac{1}{t}\int_0^t\frac{\|Q_s-u\|^2_{L^2}}{\sqrt{\beta(s)}}ds\\
    &\leq \frac{1}{\sqrt{\beta(t)}}\Big(\frac{1}{t}\int_0^t\|Q_s-u\|^2_{L^2}\,ds\Big) \leq \sqrt{\beta(t)} \to 0.
    \end{split}
\end{align}
\end{proof}
The above result only considers the convergence of $Q$. In the case where $\mathcal{L}$ is Gateaux differentiable (which is certainly the case when $\mathcal{L}$ is linear), we can give a similar result for the convergence of $\mathcal{L}Q_t$, involving the positive-definite kernel $\mathcal{B}$.

\begin{theorem}\label{thm:diffLconvergence}
Suppose $\mathcal{L}:\mathcal{H}^2\to L^2$ is Gateaux differentiable and the conditions of Theorem \ref{thm:Qconvergence} hold. Then there exists $K>0$ such that 
$\int_0^t \|\mathcal{B}\mathcal{L}Q_s\|^2_{L^2}\, ds \leq K$.
In particular, there exists a set $A\subset[0,\infty)$ with $\mathrm{Leb}(A) <\infty$ such that for all sequences $t_k\to\infty$ which do not take values in $A$, we have 
$\|\mathcal{B}\mathcal{L}Q_{t_k}\|_{L^2}\to 0$.
\end{theorem}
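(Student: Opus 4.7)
The plan is to introduce a Lyapunov functional whose dissipation along the limiting ODE directly controls $\|\mathcal{B}\mathcal{L}Q_t\|_{L^2}^2$. Motivated by the positive semi-definiteness of $\mathcal{B}$ (Lemma \ref{lem:BLip}), I would take
\begin{equation*}
V(Q) := \langle \mathcal{L}Q,\,\mathcal{B}\mathcal{L}Q\rangle_{L^2} = \|\mathcal{L}Q\|_{\mathcal{B}}^2 \geq 0.
\end{equation*}
Note that $V(Q_0)<\infty$: since $Q_0=(1-\eta)f\in\mathcal{H}^2$, Assumptions \ref{assume_1}--\ref{assume_LH2} yield $\mathcal{L}Q_0\in L^2$, and Lemma \ref{lem:Blip1} then gives $\mathcal{B}\mathcal{L}Q_0\in\mathcal{H}^2$, whence $V(Q_0)\leq \|\mathcal{L}Q_0\|_{L^2}\|\mathcal{B}\mathcal{L}Q_0\|_{L^2}<\infty$. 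The kernel $B(x,y)$ is symmetric, so $\mathcal{B}$ is self-adjoint on $L^2$; combining this with the chain rule $\tfrac{d}{dt}\mathcal{L}Q_t = D\mathcal{L}(Q_t)\tfrac{dQ_t}{dt}$ (justified by Gateaux differentiability) and the ODE $\tfrac{dQ_t}{dt}=\mathcal{B}\mathcal{L}Q_t$ should give
\begin{equation*}
\frac{d V(Q_t)}{dt} = 2\big\langle \mathcal{B}\mathcal{L}Q_t,\, D\mathcal{L}(Q_t)\mathcal{B}\mathcal{L}Q_t\big\rangle_{L^2}.
\end{equation*}

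Next I would convert strong monotonicity into a quadratic-form bound on $D\mathcal{L}$. Applying Assumption \ref{Lmonotone} (in the form implicit in the proof of Theorem \ref{thm:Qconvergence}: for pairs $f_1,f_2\in\mathcal{H}^2$ whose difference lies in $\mathcal{H}^2_{(0)}$) to $f_1=Q_t$ and $f_2=Q_t+\epsilon h$ with $h\in\mathcal{H}^2_{(0)}$, then dividing by $\epsilon^2$ and letting $\epsilon\downarrow 0$, yields
\begin{equation*}
\langle h,\, D\mathcal{L}(Q_t)h\rangle_{L^2} \leq -\gamma\,\|h\|_{L^2}^2 \qquad \text{for all }h\in\mathcal{H}^2_{(0)}.
\end{equation*}
Since $\mathcal{B}$ maps $L^2_\eta$ into $\mathcal{H}^2_{(0)}$ (as shown in the lemma immediately before Lemma \ref{lem:Blip1}), substituting $h=\mathcal{B}\mathcal{L}Q_t$ gives the dissipation inequality $\frac{d V(Q_t)}{dt} \leq -2\gamma\,\|\mathcal{B}\mathcal{L}Q_t\|_{L^2}^2$. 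Integrating over $[0,t]$ and using $V(Q_t)\geq 0$ delivers the first claim with $K:=V(Q_0)/(2\gamma)$.

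The second claim then mirrors the concluding argument of Theorem \ref{thm:Qconvergence}. Writing $\phi(t):=\|\mathcal{B}\mathcal{L}Q_t\|_{L^2}^2$, monotone convergence gives $\int_0^\infty\phi(s)\,ds\leq K$, so $\beta(t):=\int_t^\infty\phi(s)\,ds$ is finite, absolutely continuous with $\beta'=-\phi$ a.e., and satisfies $\beta(t)\to 0$. Setting $A:=\{t\geq 0:\phi(t)>\sqrt{\beta(t)}\}$ and invoking Markov's inequality together with a direct antiderivative computation,
\begin{equation*}
\mathrm{Leb}(A) \leq \int_0^\infty \frac{\phi(s)}{\sqrt{\beta(s)}}\,ds = -\int_0^\infty \frac{\beta'(s)}{\sqrt{\beta(s)}}\,ds = 2\sqrt{\beta(0)} \leq 2\sqrt{K}.
\end{equation*}
For any $t_k\to\infty$ with $t_k\notin A$ we have $\phi(t_k)\leq\sqrt{\beta(t_k)}\to 0$, giving the stated convergence.

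The main technical obstacle is rigorously justifying the chain rule for $t\mapsto V(Q_t)$ in the Gateaux setting: in infinite dimensions Gateaux differentiability does not on its own yield a composition rule, so one likely needs continuity (or local uniformity in the direction) of $D\mathcal{L}$ along the trajectory—effectively Hadamard differentiability—before freely differentiating inner products through $\mathcal{L}$. A secondary subtlety, already present implicitly in Theorem \ref{thm:Qconvergence}, is reading Assumption \ref{Lmonotone} as holding for any pair $(f_1,f_2)$ with $f_1-f_2\in\mathcal{H}^2_{(0)}$ rather than only for $f_1,f_2\in\mathcal{H}^2_{(0)}$, which is the natural formulation given how boundary data enters the PDE.
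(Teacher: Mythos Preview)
Your proposal is correct and follows essentially the same approach as the paper: both take the Lyapunov functional $\|\mathcal{L}Q_t\|^2_{\mathcal{B}}$ (the paper writes $v_t=\mathcal{L}Q_t$ and works with $\|v_t\|^2_{\mathcal{B}}$), differentiate via the chain rule through the Gateaux derivative, and use strong monotonicity to obtain the dissipation inequality $\tfrac{d}{dt}\|\mathcal{L}Q_t\|^2_{\mathcal{B}}\leq -2\gamma\|\mathcal{B}\mathcal{L}Q_t\|^2_{L^2}$. Your execution is marginally cleaner in two places: you reach $K=V(Q_0)/(2\gamma)$ by direct integration, whereas the paper first invokes Lemma~\ref{lem:BLip}(ii) and Gr\"onwall to arrive at the same constant; and your construction of $A$ via the tail $\beta(t)=\int_t^\infty\phi$ yields an explicit bound $\mathrm{Leb}(A)\leq 2\sqrt{K}$, while the paper only sketches this step (taking $A=\{t:\phi(t)>1/t\}$ and appealing to Markov's inequality ``similarly to Theorem~\ref{thm:Qconvergence}''). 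The caveats you flag---the chain rule through a merely Gateaux-differentiable $\mathcal{L}$, and reading Assumption~\ref{Lmonotone} as applying to pairs with difference in $\mathcal{H}^2_{(0)}$---are genuine and are glossed over equally in the paper's own proof.
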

\begin{proof}
    For any $w\in L^2$, $h\in \mathcal{H}^2$ we write $\partial\mathcal{L}(w; h) = \lim_{\epsilon\to 0}\{(\mathcal{L}(w+h) - \mathcal{L}(w))/\epsilon\}$ for the Gateaux derivative in direction $h$ evaluated at $w$, the limit being taken in $L^2$. 
    
    We first observe that, as $\mathcal{L}$ is strongly monotone, for any $h\in \mathcal{H}^2$,
    \begin{equation}\langle h, \partial\mathcal{L}(w;h)\rangle = \lim_{\epsilon\to 0} \frac{1}{\epsilon}\langle h, \mathcal{L}(w+\epsilon h)- \mathcal{L}(w)\rangle
    \leq -\gamma\lim_{\epsilon\to 0} \frac{1}{\epsilon^2}\|\epsilon h\|^2_{L^2} = -\gamma \|h\|^2_{L^2}.\end{equation}
    Now write $v_t = \mathcal{L}Q_t$. As $Q_t\in \mathcal{H}^2$, we know that $\mathcal{B}v_t = \mathcal{B}\mathcal{L}Q_t \in \mathcal{H}^2$ (as $\mathcal{L}:\mathcal{H}^2\to L^2$ and $\mathcal{B}:L^2\to \mathcal{H}^2$). By the chain rule, we have ${d v_t}/{dt} = \partial\mathcal{L}\big(Q_t;{dQ_t}/{dt}\big) = \partial\mathcal{L}\big(Q_t;\mathcal{B}\mathcal{L}Q_t\big)= \partial\mathcal{L}\big(Q_t;\mathcal{B}v_t\big)$.    Therefore, 
    \begin{equation}
        \frac{d}{dt}\|v_t\|^2_{\mathcal{B}} = 2\Big\langle \mathcal{B}v_t, \frac{dv_t}{dt}\Big\rangle = 2\Big\langle \mathcal{B} v_t, \partial\mathcal{L}\big(Q_t;{\mathcal{B}v_t}\big)\Big\rangle \leq -2\gamma \|\mathcal{B} v_t\|^2_{L^2}.
    \end{equation}
    Lemma \ref{lem:BLip} gives the bound $\|\mathcal{B}v_t\|^2_{L^2}\leq \lambda\|v_t\|_{\mathcal{B}}$, from which we obtain
    \begin{equation}\|\mathcal{B} v_t\|^2_{L^2} \leq \lambda\|v_t\|^2_{\mathcal{B}} \leq \lambda\|v_0\|^2_{\mathcal{B}}-2\lambda\gamma\int_{0}^t\|\mathcal{B}v_s\|^2 ds.\end{equation}
    By Gr\"onwall's inequality, we conclude
    \begin{equation}\int_0^t\|\mathcal{B}v_s\|^2_{L^2}ds \leq \lambda\|v_0\|^2_{\mathcal{B}}\int_0^t e^{-2\lambda\gamma (t-s)}ds \leq \frac{\|v_0\|^2_{\mathcal{B}}}{2\gamma } =: K<\infty.\end{equation}
Taking $A = \{t:\|\mathcal{B}v_t\|^2_{L^2}> 1/t\}$, the final stated property is obtained from Markov's inequality, similarly to in Theorem \ref{thm:Qconvergence}.
\end{proof}
\begin{corollary}
Under the conditions and notation of Theorem \ref{thm:diffLconvergence}, the sequence $\mathcal{L}Q_{t_k}$ converges weakly to zero in $L^2$ if and only if it remains bounded in $L^2$.
\end{corollary}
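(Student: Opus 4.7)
The forward (``only if'') direction is a standard consequence of the uniform boundedness principle: any weakly convergent sequence in a Hilbert space is norm-bounded, so if $\mathcal{L}Q_{t_k}\rightharpoonup 0$ in $L^2$ then $\sup_k\|\mathcal{L}Q_{t_k}\|_{L^2}<\infty$. This requires no further work.

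For the reverse (``if'') direction, suppose $\sup_k\|\mathcal{L}Q_{t_k}\|_{L^2}<\infty$ along a sequence $t_k\to\infty$ avoiding the exceptional set $A$ from Theorem \ref{thm:diffLconvergence}. The plan is to argue via the subsequence principle: it suffices to show that every further subsequence of $\mathcal{L}Q_{t_k}$ admits a further subsequence that converges weakly to $0$. Given any such subsequence (still bounded in $L^2$), the Banach--Alaoglu theorem, together with the reflexivity of the Hilbert space $L^2$, yields a further subsequence $\mathcal{L}Q_{t_{k_j}}\rightharpoonup w$ for some $w\in L^2$.

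The core step is then to identify $w=0$. Since $\mathcal{B}:L^2\to L^2$ is a bounded linear operator (indeed, Lemma \ref{lem:Blip1} gives continuity $L^2_\eta\to \mathcal{H}^2\hookrightarrow L^2$), it is weakly continuous, so $\mathcal{B}\mathcal{L}Q_{t_{k_j}}\rightharpoonup \mathcal{B}w$ in $L^2$. On the other hand, Theorem \ref{thm:diffLconvergence} (applied along the original sequence $t_k\notin A$) already provides strong convergence $\mathcal{B}\mathcal{L}Q_{t_{k_j}}\to 0$ in $L^2$, and strong convergence implies weak convergence. By uniqueness of weak limits we conclude $\mathcal{B}w=0$.

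The final ingredient is the strict positive definiteness of $\mathcal{B}$ established in Lemma \ref{lem:BLip}: if $\mathcal{B}w=0$ then $\langle w,\mathcal{B}w\rangle=\|w\|_{\mathcal{B}}^2=0$, which forces $w=0$ (the one subtlety being to verify that the Lemma \ref{lem:BLip} argument applies to $w\in L^2\subset L^2_\eta$, but this inclusion is immediate since $\eta$ is bounded). Combining these steps, every weakly convergent subsequence of $\mathcal{L}Q_{t_k}$ has weak limit $0$, hence $\mathcal{L}Q_{t_k}\rightharpoonup 0$. The only real subtlety in the argument is the weak-to-weak continuity of $\mathcal{B}$; no further estimates or analytic obstacles arise.
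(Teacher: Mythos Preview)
Your proof is correct, but it takes a genuinely different route from the paper's. The paper argues directly via the characterization of weak convergence against a dense test set: using the symmetry of the kernel $B$, it writes $\langle \mathcal{B}v, \mathcal{L}Q_{t_k}\rangle = \langle v, \mathcal{B}\mathcal{L}Q_{t_k}\rangle \to 0$ for every $v\in L^2$, and then invokes Theorem \ref{thm:densityofimB} to say that $\mathrm{im}(\mathcal{B})$ is dense in $L^2$, so the convergence extends to all test functions. Your approach instead extracts weak subsequential limits via Banach--Alaoglu, pushes them through $\mathcal{B}$ by weak-to-weak continuity, and identifies the limit as zero using the injectivity of $\mathcal{B}$ from Lemma \ref{lem:BLip}. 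The two arguments are dual to one another: for a self-adjoint bounded operator, injectivity and density of the range are equivalent, so you are appealing to the same underlying fact from a different side. The paper's version is marginally shorter because it avoids the subsequence machinery, while yours has the advantage of making the role of $\mathcal{B}$'s injectivity (rather than the heavier density result Theorem \ref{thm:densityofimB}) explicit.
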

\begin{proof}
    Recall that a sequence $x_n$ converges  $L^2$-weakly to zero if and only if it is  $L^2$-bounded and there exists an $L^2$-dense set $\mathcal{Y}$ such that $\langle y, x_n\rangle \to 0$ for all $y\in \mathcal{Y}$.
    
    We know from Theorem \ref{thm:diffLconvergence} that $\mathcal{B}\mathcal{L}Q_{t_k}\to 0$ strongly in $L^2$ so, for any $v\in L^2$,
$        \langle \mathcal{B}v, \mathcal{L}Q_{t_k}\rangle = \langle v, \mathcal{B}\mathcal{L}Q_{t_k}\rangle \to 0$.
    In particular, we see that $\langle y, Q_{t_k}\rangle \to 0$ for any $y\in \mathrm{im}(\mathcal{B})$. We know from Theorem \ref{thm:densityofimB} that $\mathrm{im}(\mathcal{B})$ is dense in $\mathcal{H}^2$, and hence in $L^2$. The result follows.
\end{proof}

\section{Numerical experiments}\label{sec_num}
In this section, we present numerical results where we apply our algorithm to solve a family of partial differential equations. The approximator matches the solution of the differential equation closely in a relatively short period of training time in these test cases.
\subsection{Discretization of the continuous-time algorithm} \label{sec_61}
As a reminder, in a continuous time setting, the algorithm follows a biased gradient flow:
${d \theta_t}/{dt}=-\alpha^N_t G_t^N(\theta_t)$,
based on the biased gradient estimator
\begin{align}
    G_t^N(\theta_t)=\int_{\Omega} 2\psi(\mathcal{L}Q^N_t(x))\psi'(\mathcal{L}Q^N_t(x))\nabla_\theta (-\gamma Q_t^N(x))d\mu(x).
\end{align}
A natural discretization of our algorithm with which we train our approximator is
\begin{align} \label{177}
    \theta_{t+1}-\theta_t = -{\alpha}_t^N \hat {G_{M,t}^N},
\end{align}
where \begin{align}\label{GN}
    \hat {G_{M,t}^N} = \frac{1}{M}\sum_{i=1}^{M} 2\psi(\mathcal{L}Q^N_t(x_t^i))\psi'(\mathcal{L}Q^N_t(x_t^i))\nabla_\theta (-\gamma Q_t^N(x_t^i))
\end{align}
is an unbiased estimator of $G_t^N$, given $\{x_t^i\}_{i=1,2,..,M}$ the random grid independently sampled from distribution $\mu$ at time step $t$, and ${\alpha}_t^N$ is the learning rate. More advanced discretization methods, for example the ADAM adaptive gradient descent rule (\cite{kingma2014adam}) can also be used, and (as is common in gradient based optimization problems) give a noticeable improvement in performance.

The Q-learning algorithm for solving PDEs as follows:

\smallskip
\begin{algorithm}[H] \label{algo}
\SetAlgoLined
 \textbf{Parameters:} Hyper-parameters of the single-layer neural network; Domain $\Omega$; PDE operator $\mathcal{L}$; Boundary condition at $\partial \Omega$; Sampling measure $\mu$; Number of Monte-Carlo points $M$\; Upper bound of training time $T$\;
 \textbf{Initialise:} Neural net $S^N$; Auxiliary function $\eta$; Approximator $Q^N$ based on $S^N$ and $\eta$; Smoothing function $\psi^N$; Learning rate scheduler $\{{\alpha}_t^N \}_{t\geq 0}$; Stopping criteria $\epsilon$; Current time $t=0$.

 \While{$err \geq \epsilon$ and $t\leq T$}{
  Sample $M$ points in $\Omega$ using $\mu$, $\{x_i\}$\;
  Compute biased gradient estimator ${G_{M,t}^N}$ using \eqref{GN}\;
  Update neural network parameters via (\ref{177})\;
  Compute $err=\frac{1}{M}\sum_{i=1}^{M}\psi^N(\mathcal{L}Q_t^N(x_i))^2$\;
  Update time $t$;
 }
 Return approximator $Q_t^N$.
\caption{Q-PDE Algorithm}
\end{algorithm}

\smallskip

We implement this algorithm using PyTorch (with backpropagation to compute gradients in both parameters and the state variable $x$) and the standard ADAM optimization scheduler for $\alpha$. \footnote{The implementation is available at \url{https://github.com/DeqingJ/QPDE}.}

\subsection{Test equation: Survival time of a Brownian motion}
Suppose $\Omega$ is the $n$-dimensional unit ball. For $n$-dimensional Brownian motion $X_t$ starting from point $x \in \mathbb{R}^n$, we wish to compute
\begin{align}
    u(x):=\mathbb{E}\Big[\int_0^{\tau_\Omega}e^{-\gamma t} dt\Big|X_0=x\Big ]
\end{align} where $\tau_\Omega$ is the first exit time of $X$ for domain $\Omega$. By the Feynman--Kac theorem (applied to the process $X$ stopped at $\tau_\Omega$), this expectation is given by the solution of the following PDE:
\begin{align}
\begin{cases} \label{examplepde1}
    1 - \gamma u + \frac{1}{2}\Delta u&= 0\quad \text{in} \,\,  \Omega\\
    \quad \quad \quad \quad \quad \,\,\,\,u&= 0 \quad \text{on}\,\, \partial \Omega,
\end{cases}
\end{align}
where $\Delta u$ is the Laplacian of $u$. We consider this problem as a good test case as its explicit solution is known to us, allowing us to check if our algorithm works for a high-dimensional version of this PDE.
We initialize a single neural network $S^N$ equipped with sigmoid activation function, and introduce auxiliary function $\eta(x)=1-\|x\|^2$. The approximator $Q=S\cdot \eta$ is trained to fit the solution. As shown in the following subsections, through our algorithm, the approximator $Q$ learns to have the solution of the PDE with different discounting factor in different dimensions. For detailed configuration of our approximator, see Appendix \ref{config}.

\subsubsection{1-dimensional case}
For dimension $n=1$, domain $\Omega=(-1,1)$. The exact solution of this differential equation is $u(x)=\frac{1}{\gamma}+c_1 (e^{\sqrt{2\gamma} x} + e^{-\sqrt{2\gamma} x})$
where $c_1={-1}/{(\gamma (e^{-\sqrt{2\gamma}} +e^{\sqrt{2\gamma}}))}$. For this subsection, set $\gamma=0.1$. To keep track of the training progress, we monitor the average loss level at time $t$, that is $e_t \approx \int_\Omega [\mathcal{L}Q_t]^2 d\mu$ where $\mu$ is the Lebesgue measure on $\Omega$, which can be estimated using our sample of evaluation points at each time.

\begin{figure}[H]
            \centering
            \includegraphics[width=\textwidth]{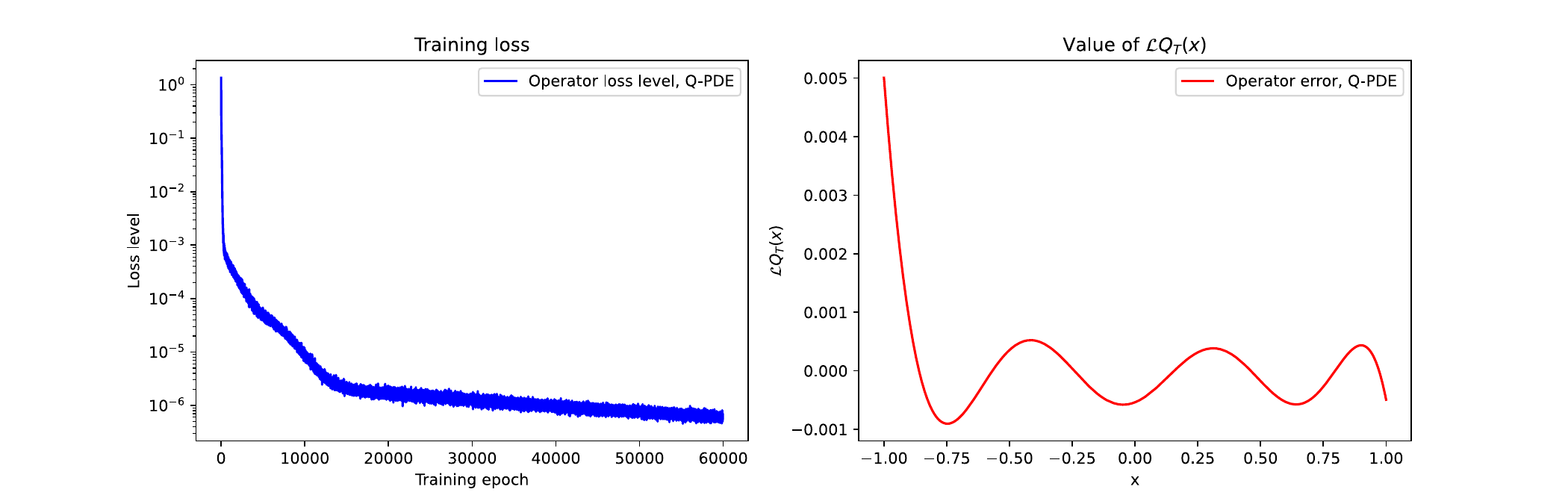}
            \caption{Loss level during and after training}
            \label{f1}
        \end{figure}
The left side of Figure \ref{f1} shows how the average operator loss smoothly decays during training. The right side of it plots the operator error at terminal time $\mathcal{L}Q_T^N(x)$. For a perfect fit, the operator loss $\mathcal{L}Q_T(x)$ should be zero across the entire domain. We observe that for our approximator, the norm of this loss remains small and fluctuates around $0$. In Figure \ref{f2}, we compare our approximator $Q_\tau^N$ with the exact solution. Here $\tau$ indicates the training time at which we observed the minimal loss $e_\tau$, which occurs for $\tau$ close to the terminal time $T$. (Given the fluctuations observed in the loss, chosing this minimal point can give a noticeable qualitative improvement in performance.) The relative error of our approximation at $x$, given by $|{(Q_\tau^N(x)-u(x))}/{u(x)}|$, remains lower than $0.02\%$.
\begin{figure}[H]
            \centering
            \includegraphics[width=\textwidth]{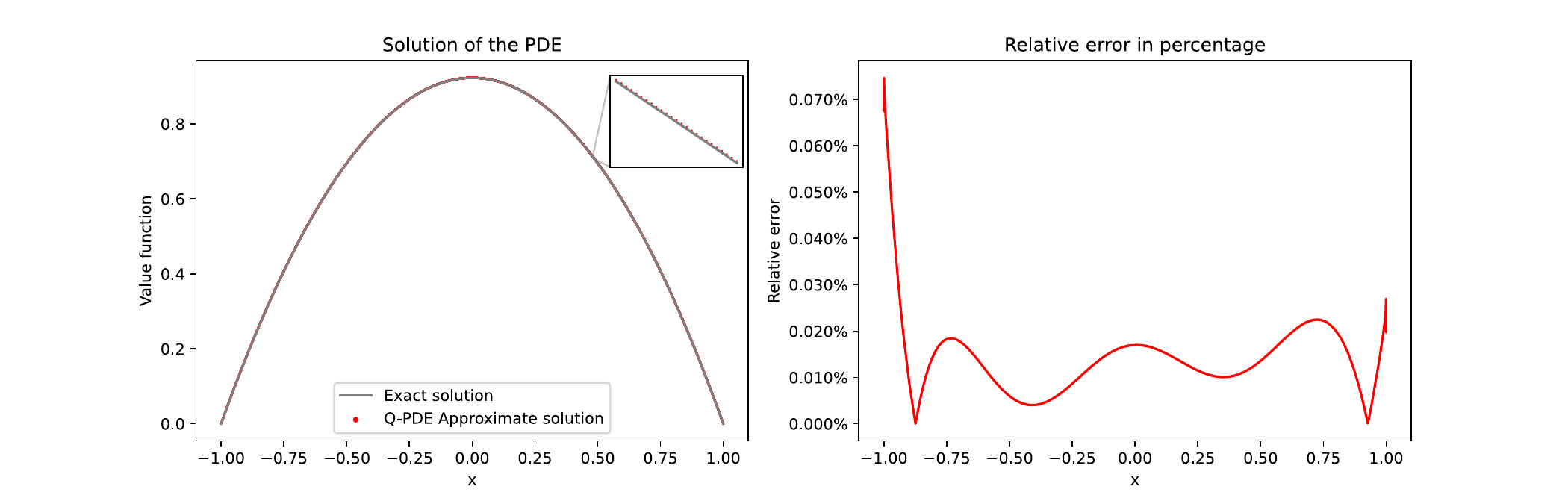}
            \caption{Approximate and exact solutions in 1d case}
            \label{f2}
        \end{figure}
        
\subsubsection{20-dimensional case}
Now we consider the 20-dimensional case. High-dimensional PDEs are typically hard to solve, as mesh grids are computationally unaffordable in these cases. However, as a mesh-free method, our algorithm can be applied. We numerically solve the PDE  \eqref{examplepde1} for discounting factor $\gamma=0.2$ via our algorithm and compare the result with the exact solution, given by
\begin{equation}
    u(x)= \frac{1}{\gamma}\Big(1 - \frac{J_9\big(i \sqrt{2\gamma}\|x\|\big)}{\|x\|^9 J_9(i \sqrt{2\gamma})}\Big),
\end{equation}
where $J_9$ is the ninth-order Bessel function of the first kind. We also compare the training results of our method to the results from the Deep Galerkin Method (which does stochastic gradient descent with an unbiased gradient estimate) as a benchmark.

Our approximator trained with the Q-PDE algorithm is able to accurately approximate the solution of the 20-dimensional PDE. Left of Figure \ref{f5} shows that the operator loss is less than $10^{-5}$ after training for 40,000 epochs, for both Q-PDE and DGM. For both methods, we observe cyclic behavior in the loss during training, which appears similar to the slingshot effect discussed in \cite{thilak2022slingshot}, which is potentially a consequence of momentum accumulation of the ADAM optimizer. A full understanding of this effect (which we have not observed when using a simple gradient descent optimizer) is left for future work.
\begin{figure}[H]
            \centering
            \includegraphics[width=\textwidth]{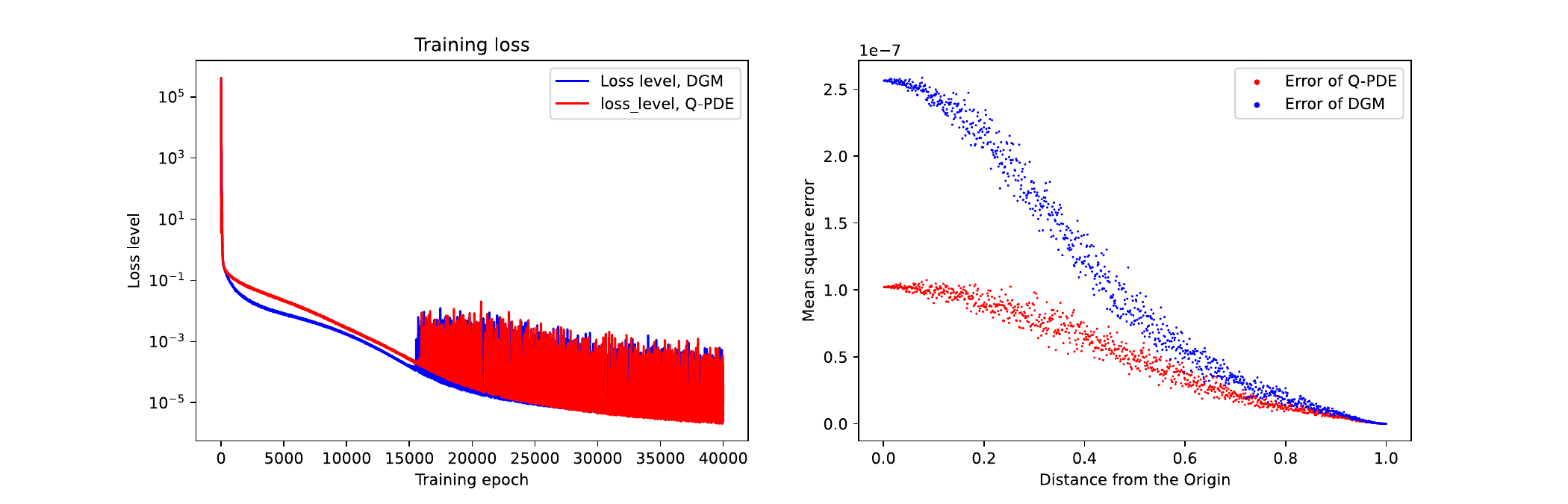}
            \caption{Average loss level during training}
            \label{f5}
        \end{figure}
The solution $u$ of this PDE is radially symmetric, that is, it can be represented by a function which depends on $\|x\|$ rather than $x$. For the right side of Figure \ref{f5}, we verify that our approximate solution is close to this function for different $\|x\| \in [0,1]$. For $0\leq r \leq 1$, we randomly generate 100 sample points $\{x_{j,r}\}_{j=1,...,100}$ on the sphere $\|x\| = r$ and compute the mean-squared error of the set of values $\{Q_\tau^N(x_{i,r})\}$ as compared to the exact solution. It is shown that for different $r$, the mean-squared error of the neural network approximator is consistently small. Q-PDE solution obtains smaller error than our DGM solution, but they are at the same scale.

For $0\leq r \leq 1$, we also consider the maximum relative error, defined as 
\begin{equation}
    R_r = \max_{x_{i,r}} \bigg\{\Big|\frac{Q^N_\tau(x_{i,r})-u(x_{i,r})}{u(x_{i,r})}\Big|\bigg\}.
\end{equation}
\begin{figure}[H]
            \centering
            \includegraphics[width=\textwidth]{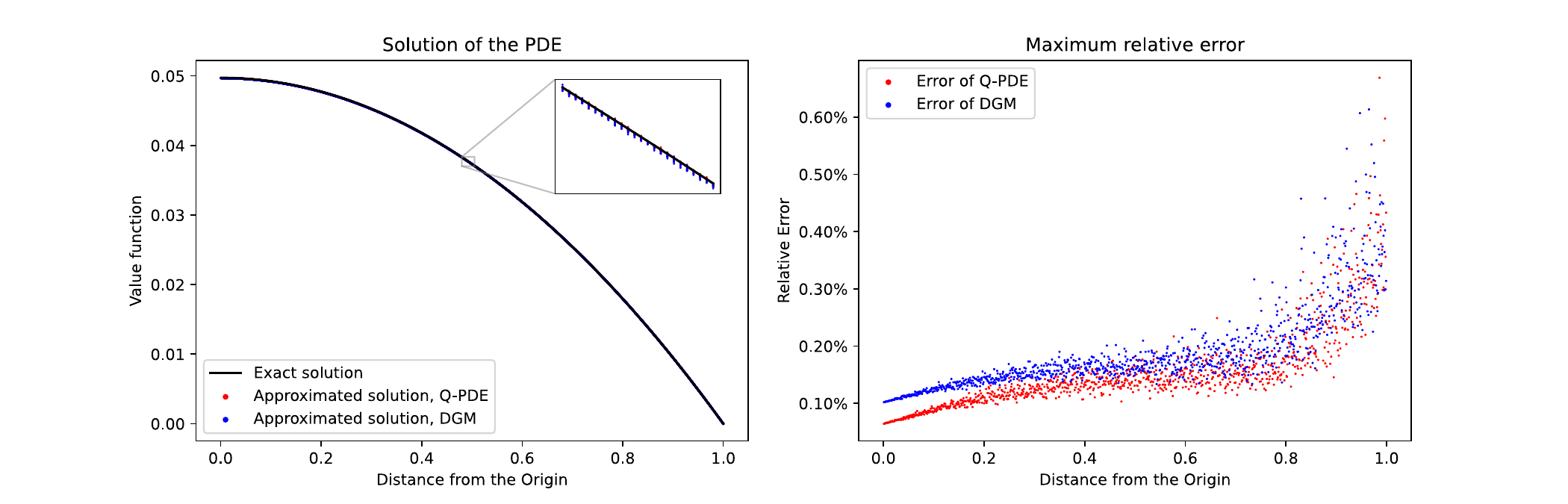}
            \caption{Relative and absolute errors of the approximate solution, in terms of distance from the origin.}
            \label{f6}
        \end{figure}
The left plot of Figure \ref{f6} implies that this error remains small for all $0 \leq r \leq 1$. Again, DGM solution admits slightly larger relative error. Meanwhile, as DGM solution is trained with exact gradients which are computationally more expensive, consequently, it takes 5 times longer to train. Both DGM and Q-PDE solutions reach high accuracy.
We plot a scatter plot for $\{Q_\tau^N(w_i)\}$ in the right side in Figure \ref{f6} and compare with the exact solution $u$. The approximate solutions closely matches the exact solution. 

\section{Conclusion}
 In this paper, we propose a novel algorithm which numerically solves elliptic PDEs. The neural network approximator is designed to match the Dirichlet boundary condition of the PDE and is trained to satisfy the PDE operator. We prove that for single-layer neural network approximators, as the number of hidden units $\rightarrow \infty$, the evolution of the approximator during training is characterized by a limit ODE. We further prove that as the training time $\rightarrow \infty$, the solution of the limit ODE converges to the classical solution of the PDE. In addition, we provide a numerical test case to show that our algorithm can numerically solve (high-dimensional) PDEs.
\acks{Thanks to Endre S\"uli, {\L}ukasz Szpruch, Christoph Reisinger and Rama Cont, and to anonymous referees, for useful conversations and suggestions.

This publication is based on work supported by the EPSRC Centre for Doctoral Training in Industrially Focused Mathematical Modelling (InFoMM) (EP/L015803/1) in collaboration with the Alan Turing Institute. Samuel Cohen also acknowledges the support of the Oxford--Man Institute for Quantitative Finance, and the UKRI Prosperity Partnership Scheme (FAIR) under the EPSRC Grant EP/V056883/1.

The order of authorship in this paper was determined alphabetically.}


\newpage

\appendix
\section{Appendix}
In our proofs, the constants $K$, $C$ may vary from line to line.
\subsection{Proof of Lemma \ref{lemma_eta}} \label{proof_214}
\begin{proof}
We first prove\footnote{Thanks to Endre S\"uli for suggesting the proof of statement \emph{i} given here.} statement \emph{i}. Suppose that $u \in \mathcal{H}^2(\Omega) \cap \mathcal{H}^1_0(\Omega)$. Consider the Poisson problem solved by $u$:
\begin{align}
\begin{cases} 
    -\Delta u= f\quad \text{in} \,\,  \Omega,\\
    \quad \,\,\,\,u= 0 \quad \text{on}\,\, \partial \Omega,
\end{cases}
\end{align}
where $f:= - \Delta u \in L^2(\Omega)$. Let $f_n \in C^\infty_0(\Omega)$ be a sequence with $f_n \to f \in L^2(\Omega)$. Then consider the Poisson problem
\begin{align}
\begin{cases} 
    -\Delta u_n= f_n\quad \text{in} \,\,  \Omega,\\
    \quad \,\,\,\,u_n= 0 \quad \,\,\text{on}\,\, \partial \Omega.
\end{cases}
\end{align}
Since (by Assumption \ref{boundaryRegular}) $\partial \Omega \in C^{3,\alpha}$ for a H\"older
exponent $\alpha \in (0,1)$ and $f_n\in C^\infty_0(\Omega)\subset C^{1,\alpha}(\overline \Omega)$, by the Schauder theory for elliptic
boundary-value problems (see, for example, Theorems 6.14 and 6.19 in \cite{gilbarg1998elliptic}), there exists a unique solution $u_n \in C^{3,\alpha}(\overline\Omega) \cap C_0(\overline\Omega)$ to this boundary-value problem. As
\begin{align}
\begin{cases} 
    -\Delta (u-u_n)= (f-f_n)\quad \text{in} \,\,  \Omega,\\
    \quad \quad \,\,u-u_n= 0 \quad \quad \quad \quad \,\text{on}\,\, \partial \Omega,
\end{cases}
\end{align}
and $\partial \Omega \in C^{3,\alpha}$ (and
therefore also $C^2$), and $u - u_n \in \mathcal{H}^2(\Omega)$, it follows
from Theorem 3.1.2.1 and Remark 3.1.2.2 in \cite{grisvard2011elliptic}, that for some $C>0$ \begin{align}
    \|u - u_n\|_{\mathcal{H}^2} \leq C\|\Delta (u - u_n)\|_{L^2} = C \|f - f_n\|_{L^2} \to  0
\end{align}
as $n \to \infty$. Thus, we have constructed a sequence of functions $u_n \in C^{3,\alpha}(\overline\Omega)\cap \mathcal{H}^1_0$ which converges to $u \in \mathcal{H}^2(\Omega)$ in the $\mathcal{H}^2$ norm. Therefore, $C^{3,\alpha}(\overline\Omega) \cap C_0(\overline\Omega)$ is dense in $\mathcal{H}^2(\Omega)\cap \mathcal{H}^1_0(\Omega)$. As
\begin{align}
    \Big(C^{3,\alpha}(\overline\Omega) \cap C_0(\overline\Omega)\Big) \subset \Big(C^{3}(\overline\Omega) \cap  C_0(\overline\Omega)\Big),
\end{align}
we conclude that $C^{3}(\overline\Omega) \cap  C_0(\overline\Omega)$ is also dense in $\mathcal{H}^2(\Omega) \cap \mathcal{H}^1_0(\Omega)$.

We now prove statement \emph{ii}. Take an arbitrary $u\in C^3(\overline\Omega)\cap  C_0(\overline\Omega)$, and define $\tilde u = u/\eta$. It is clear that $\tilde u|_\Omega$ is in $C^3(\Omega)$, from classical rules of calculus, as $\eta|_\Omega> 0$. In particular, $\tilde u$ and its derivatives are bounded on any closed subset of $\Omega$, so we only need to consider the behavior of $\tilde u$ in a neighbourhood of $\partial\Omega$.

We know that $\eta$ has nonvanishing derivative on $\partial\Omega$, and equals zero on $\partial \Omega$, which implies that $|\langle \nabla \eta(x), \mathbf{n}_{x}\rangle|>0$ for each $x\in\partial\Omega$, where $\mathbf{n}_x$ is the outward pointing normal at $x$.  As  $\partial \Omega$ is compact, this implies that
\begin{equation}\inf_{x \in \partial\Omega}|\langle \nabla \eta(x), \mathbf{n}_{x}\rangle|>\epsilon,\quad \text{for some }\epsilon>0.\end{equation} 
As $u\in C^3(\overline\Omega)\cap  C_0(\overline\Omega)$, for any $x\in \partial \Omega$, we also have $\mathop{\mathrm{lim\,sup}}_{x'\to x}|u(x')| = 0$.

Consider a smooth path $\rho:[0,1)\to \Omega$ with $\lim_{t\to1}\rho(t)=x\in \partial \Omega$. For a function $g$ with domain $\Omega$ (e.g. $g=\eta$, $u$ or $\tilde u$), we write $g_\rho$ for the composition $g\circ \rho$ with domain $[0,1)$. Suppose $\rho$ does not approach $\partial \Omega$ tangentially, in particular, 
\begin{equation}\label{nontangent}
\lim_{t\to 1}\langle d\rho/dt, \mathbf{n}_{x}\rangle \geq \epsilon \quad \text{and}\quad \|d\rho/dt\|\equiv 1.
\end{equation}
As $\langle \nabla \eta(x), y\rangle=0$ for all $y\perp \mathbf{n}_{x}$, we have
\begin{equation}
    \lim_{t\to 1}|\eta_\rho'(t)|= |\langle \nabla \eta(x), \mathbf{n}_{x}\rangle|\, \langle d\rho/dt, \mathbf{n}_{x}\rangle>\epsilon^2.
\end{equation}

Using L'H\^opital's rule, we can determine the behaviour of $\tilde u_\rho$ (and its derivatives) as $t\to 1$. In particular, by repeated application of the rule,  as $t\to 1$
\begin{align}
    \tilde u_\rho &= \frac{u_\rho}{\eta_\rho} \to \frac{u_\rho'}{\eta_\rho'}\\
    \tilde u_\rho' & = \frac{\eta_\rho u_\rho' - \eta_\rho' u_\rho}{\eta_\rho^2} \to  \frac{u_\rho''}{ 2\eta_\rho'} -  \frac{\eta_\rho''}{2\eta_\rho'}\tilde u_\rho\\
    \tilde u_\rho'' &= \frac{\eta_\rho^2 u_\rho'' - \eta_\rho \eta_\rho'' u_\rho - 2\eta_\rho \eta_\rho' u_\rho' + 2 (\eta_\rho')^2u_\rho}{\eta_\rho^3}\to \frac{u_\rho'''}{3\eta_\rho'} - \frac{\eta_\rho'''}{3\eta_\rho'}\tilde u_\rho+ \frac{\eta_\rho''}{\eta_\rho'}\tilde u_\rho'
\end{align}
Considering $t\approx 1$, as $|\eta'_\rho|>\epsilon^2$, the right hand side of each of these terms is bounded,  with a uniform bound for all paths under consideration. Writing $D_h(\tilde u)$ for the directional derivative of $\tilde u$ in a direction $h$, we know $\|\tilde u'_\rho\| = \|D_{d\rho/dt}(\tilde u)\|$ and $\|\tilde u_\rho''\| = \|D^2_{d\rho/dt}(\tilde u)\|$. We can therefore find a single value $K>0$, and take a ball $B_x$ around each $x\in \partial\Omega$, such that, within each $B_x$,  for all unit vectors $h$ satisfying $\langle h, \mathbf{n}_x\rangle >\epsilon$,
\begin{equation}\label{tildeubound}
|\tilde u|+|D_{h}(\tilde u)|+|D_{h}^2(\tilde u)|\leq K.
\end{equation}
 As $\Omega\setminus (\cup_{x\in\partial \Omega} B_{x})$ is compact, we know that, for $K$ sufficiently large, \eqref{tildeubound} also holds on $\Omega\setminus (\cup_{x\in\partial \Omega} B_{x})$, for all unit vectors $h$. We conclude that \eqref{tildeubound} holds on all of $\Omega$, that is, $\tilde u$, and its directional first and second derivatives (except possibly in tangent directions at the boundary), are uniformly bounded.

However, as $\partial\Omega$ is a $C^{3,\alpha}$ boundary (in particular, for all $\epsilon$ sufficiently small and any $x\in \partial\Omega$ the set $\{y\in \mathbb{R}^n: \langle y, \mathbf{n}_{x}\rangle \geq \epsilon\}$ is a convex cone containing a linear basis for $\mathbb{R}^n$), this implies that the first and second derivatives of $\tilde u$ are also uniformly bounded on $\Omega$. We conclude that $\tilde u\in C^2_b(\Omega)$.
\end{proof}
\subsection{Proof of Lemma \ref{bounded_t}}\label{proof_2}
\begin{proof}
(1) From the training algorithm, calculating the entries in vector $\nabla_\theta Q_t^N(x)$, we have 
\begin{align}
\begin{split} \label{training_dynamic}
    \frac{\partial Q_t^N}{\partial c^i}(x)&=\frac{\eta(x)\sigma( w_t^i \cdot x+b_t^i)}{N^\beta}. 
\end{split}
\end{align}
Therefore, by (\ref{Q-Learning}), applying the chain rule we derive
\begin{align} \label{dynamic_c}
    \frac{dc_t^i}{dt}=\alpha N^{\beta-1} \int_{\Omega}F^N(\mathcal{L}Q^N_t(x))\eta(x)\sigma(w_t^i \cdot x+b_t^i) d\mu(x).
\end{align}
By the boundedness of the functions $F^N$, $\eta$ and $\sigma$, the RHS of (\ref{dynamic_c}) is bounded:
\begin{align} \label{dct}
        \bigg |\int_{\Omega}F^N(\mathcal{L}Q^N_t(x))\eta(x)\sigma(w_t^i \cdot x+b_t^i) d\mu(x) \bigg |< KN^{\delta},
\end{align}
    where $K$ is a constant. Therefore, we conclude that
    \begin{align}
        |c_t^i|<|c_0^i|+|c_t^i-c_0^i|<K_0+\alpha N^{\delta+\beta-1} tK < C.
    \end{align}

(2) Similar to above, considering the $w^i$ entries in the vector $\nabla_\theta Q_t^N(x)$, we have 
\begin{align}
\begin{split}
    \frac{\partial Q_t^N}{\partial (w_t^i)_k}(x)&= \frac{\eta(x)\sigma'(w_t^i \cdot x +b_t^i)c_t^i x_k}{N^\beta}. 
\end{split}
\end{align}
Therefore $w_t^i$ satisfies:
\begin{align} \label{dynamic_w}
    \frac{d(w_t^i)_k}{dt}=\alpha N^{\beta-1} \int_{\Omega}F^N(\mathcal{L}Q^N_t(x))\eta(x)c_t^i\sigma'(w_t^i \cdot x+b_t^i)x_k d\mu(x).
\end{align}
By the boundedness of $F^N$, $\eta$, $\sigma'$ and $c_t^i$, the RHS of (\ref{dynamic_w}) is bounded by
\begin{align} \label{dwt}
    \bigg |\int_{\Omega}F^N(\mathcal{L}Q^N_t)(x)\eta(x)c_t^i\sigma'(w_t^i \cdot x+b_t^i)x_k d\mu(x) \bigg |< KN^{\delta},
\end{align}where $K$ is a constant. Therefore, \begin{align}
    |(w_t^i)_k|<|(w_0^i)_k|+|(w_t^i)_k-(w_0^i)_k|<|(w_0^i)_k|+\alpha N^{\delta+\beta-1} tK<|(w_0^i)_k|+C.
\end{align}
Taking an expectation on both sides, we obtain \begin{align}
    \mathbb{E}|(w_t^i)_k|<C.
\end{align}

(3) Considering the $b^i$ entries in the vector $\nabla_\theta Q_t^N(x)$, we have 
\begin{align}
\begin{split}
    \frac{\partial Q_t^N}{\partial b_t^i}(x)&= \frac{\eta(x)\sigma'(w_t^i \cdot x +b_t^i)c_t^i }{N^\beta}. 
\end{split}
\end{align}
Therefore, $b_t^i$ satisfies
\begin{align} \label{dynamic_b}
    \frac{d b_t^i}{dt}=\alpha N^{\beta-1} \int_{\Omega}F^N(\mathcal{L}Q^N_t(x))\eta(x)c_t^i\sigma'(w_t^i \cdot x+b_t^i) d\mu(x).
\end{align}
By the boundedness of $F^N$, $\eta$, $\sigma'$ and $c_t^i$, the RHS of (\ref{dynamic_w}) is bounded by
\begin{align} \label{dbt}
    \bigg |\int_{\Omega}F^N(\mathcal{L}Q^N_t)(x)\eta(x)c_t^i\sigma'(w_t^i \cdot x+b_t^i)d\mu(x) \bigg |< KN^{\delta},
\end{align}where $K$ is a constant. Therefore, \begin{align}
    |b_t^i|<|b_0^i|+|b_t^i-b_0^i|<|b_0^i|+\alpha N^{\delta+\beta-1} tK<|b_0^i|+C.
\end{align}Taking an expectation on both sides, we obtain \begin{align}
    \mathbb{E}|b_t^i|<C.
\end{align}
\end{proof}
\label{appen}
\subsection{Proof of Lemma \ref{ata0_lemma_1}}
\begin{proof} (1) Notice that \begin{align}
    \begin{split} \label{ata0_1}
        &|A_t^N(x,y)-A_0^N(x,y)| =\Bigg | \frac{1}{N}\sum_{i=1}^N \bigg [\sigma(w_t^i \cdot x+b_t^i)\sigma(w_t^i \cdot y + b_t^i)-\sigma(w_0^i \cdot x+b_0^i)\sigma(w_0^i \cdot y + b_0^i)\\
        & \qquad + x \cdot y (C_t^i)^2 \sigma'(w_t^i \cdot x+b_t^i)\sigma'(w_t^i \cdot y + b_t^i) -x \cdot y (C_0^i)^2 \sigma'(w_0^i \cdot x+b_0^i)\sigma'(w_0^i \cdot y + b_0^i) \bigg ] \Bigg |\\
        &\leq  \frac{1}{N}\sum_{i=1}^N \bigg [ |\sigma(w_t^i \cdot x+b_t^i)\sigma(w_t^i \cdot y + b_t^i)-\sigma(w_0^i \cdot x+b_0^i)\sigma(w_0^i \cdot y + b_0^i)|\\
        &\qquad +|x \cdot y\|{c_t^i}^2-{c_0^i}^2\|\sigma'(w_t^i \cdot x+b_t^i)\sigma'(w_t^i \cdot y + b_t^i)|\\
        &\qquad + {c_0^i}^2|x \cdot y\|\sigma'(w_t^i \cdot x+b_t^i)\sigma'(w_t^i \cdot y + b_t^i)-\sigma'(w_0^i \cdot x+b_0^i)\sigma'(w_0^i \cdot y + b_0^i)|\bigg ].
    \end{split}
    \end{align}
    For a smooth function $f:\mathbb{R} \to \mathbb{R}$, applying the mean value theorem with respect to $w_t^i$, there exists $w^* \in \mathbb{R}^n$ such that
    \begin{align}
        \begin{split}
            &f(w_t^i \cdot x+b)f(w_t^i \cdot y + b)-f(w_0^i \cdot x+b)f(w_0^i \cdot y + b)\\
            &=(w_t^i-w_0^i)\cdot\Big[f'(w^* \cdot x+b))f(w^* \cdot y+b)x+f(w^* \cdot x+b))f'(w^* \cdot y+b)y\Big].
        \end{split}
    \end{align}
    By the fact that $\sigma \in C_b^4(\mathbb{R})$ and $|c_u^i|< C$ for any $u \in [0,T]$, we have 
    \begin{align}
    \begin{split}
         &|A_t^N(x,y)-A_0^N(x,y)| \leq  \frac{C}{N}\sum_{i=1}^N \Big[(\|x\|_1+\|y\|_1)\|w_t^i-w_0^i\|_1+|x \cdot y| |c^i_t-c_0^i|\\
         &\qquad +\|w_t^i-w_0^i\|_1(\|x\|_1+\|y\|_1)|x \cdot y|+(1+|x \cdot y|)|b_t^i -b_0^i|\Big].
    \end{split}
    \end{align}
    Therefore, by \eqref{dct}, \eqref{dbt} and \eqref{dwt}, the increments of $c_t$, $b_t$ and $w_t$ are bounded by $CN^{\delta+\beta-1}$. Therefore, we have 
    \begin{align}
    \begin{split}
        |A_t^N(x,y)-A_0^N(x,y)|&\leq CN^{\delta+\beta-1}\bigg [ (\|x\|_1+\|y\|_1)(1+|x\cdot y|)+ |x \cdot y| \bigg ] \leq CN^{\delta+\beta-1}.
    \end{split}
    \end{align}
    (2) By definition,
    \begin{align}
        \begin{split}
            \partial_{y_i}A_t^N(x,y)&=\frac{1}{N}\sum_{k=1}^N \Bigg [
            \sigma(w_t^k\cdot x+b_t^k)\sigma'(w_t^k\cdot x+b_t^k)(w_t^k)_i  + {c_t^k}^2  \sigma'(w_t^k\cdot x+b_t^k) \sigma'(w_t^k\cdot y+b_t^k) x_i\\
            &\qquad +{c_t^k}^2  \sigma'(w_t^k\cdot x+b_t^k) \sigma''(w_t^k\cdot y+b_t^k) x \cdot y (w_t^k)_i \Bigg ].
        \end{split}
    \end{align}
    Similarly to in \eqref{ata0_1}, we split the difference between $\partial_{y_i}A_t^N(x,y)$ and $\partial_{y_i}A_0^N(x,y)$ into terms \begin{align}
        \begin{split}
            &|\partial_{y_i}A_t^N(x,y)-\partial_{y_i}A_0^N(x,y)| \leq \frac{1}{N}\sum_{k=1}^N \bigg [
            \Big|\sigma(w_t^k\cdot x+b_t^k)\sigma'(w_t^k\cdot x+b_t^k)(w_t^k)_i\\&\qquad\qquad
            -\sigma(w_0^k\cdot x+b_0^k)\sigma'(w_0^k\cdot x+b_0^k)(w_0^k)_i\Big|\\
            &\quad +\Big|{c_t^k}^2  \sigma'(w_t^k\cdot x+b_t^k) \sigma'(w_t^k\cdot y+b_t^k) x_i - {c_0^k}^2  \sigma'(w_0^k\cdot x+b_0^k) \sigma'(w_0^k\cdot y+b_0^k) x_i\Big|\\
            &\quad +\Big|{c_t^k}^2  \sigma'(w_t^k\cdot x+b_t^k) \sigma''(w_t^k\cdot y+b_t^k) x \cdot y (w_t^k)_i -{c_0^k}^2  \sigma'(w_0^k\cdot x+b_0^k) \sigma''(w_0^k\cdot y+b_0^k) x \cdot y (w_0^k)_i\Big|\bigg ]. \label{ata0_2}
        \end{split}
    \end{align}
    Applying the mean value theorem with respect to $w_t^i$ to each term on the RHS of (\ref{ata0_2}), using the fact that $\sigma \in C_b^4(\mathbb{R})$ and that $|c_t^i|< C$ for any $t \in [0,T]$, we have \begin{align}
        \begin{split}
            &|\partial_{y_i}A_t^N(x,y)-\partial_{y_i}A_0^N(x,y)| \leq \frac{C}{N}\sum_{k=1}^N \bigg [
            |(w_t^k)_i-(w_0^k)_i|+|(w_0^k)_i|(\|x\|_1+\|y\|_1)\|w_t^k-w_0^k\|_1\\
            &\quad + |c_t^i-c^i_0||x_i|+|x_i|(\|x\|_1+\|y\|_1)\|w_t^k-w_0^k\|_1+|(w_t^k)_i-(w_0^k)_i||x\cdot y|\\
            & \quad + |c_t^i-c^i_0||(w_0^k)^i||x \cdot y|+|(w_0^k)^i||x \cdot y|(\|x\|_1+\|y\|_1)||w_t^k-w_0^k||_1 + (1+|x \cdot y|)|b_t^k-b_0^k|\bigg]. \notag
        \end{split}
    \end{align}
    Therefore, by the boundedness of $|c_t^k-c_0^k|$, $|b_t^k-b_0^k|$, $|(w_t^k)_i-(w_0^k)_i|$ and $||w_t^k-w_0^k||_1$ we derive \begin{align}
\begin{split}
    &|\partial_{y_i}A_t^N(x,y)-\partial_{y_i} A_0^N(x,y)| \leq C N^{\beta-1} \bigg [ 1+ \frac{\sum_{k=1}^N |(w_0^k)_i|}{N}(|x\cdot y|+1)(1+\|x\|_1+\|y\|_1)\\&\quad +|x_i|(1+\|x\|_1+\|y\|_1)+|x\cdot y| \bigg ] \leq C N^{\delta+\beta-1}. \notag
\end{split}
\end{align}
By definition,
\begin{align}
    \begin{split}
            \partial^2_{y_i y_j}A_t^N(x,y)&=\frac{1}{N}\sum_{k=1}^N \Bigg [
            \sigma(w_t^k\cdot x+b_t^k)\sigma''(w_t^k\cdot x+b_t^k)(w_t^k)_i(w_t^k)_j \\ &         + {c_t^k}^2  \sigma'(w_t^k\cdot x+b_t^k) \sigma''(w_t^k\cdot y+b_t^k) x_i(w_t^k)_j
            \\
            &\quad + {c_t^k}^2  \sigma'(w_t^k\cdot x+b_t^k) \sigma''(w_t^k\cdot y+b_t^k) x_j(w_t^k)_i \\
            &\quad +{c_t^k}^2  \sigma'(w_t^k\cdot x+b_t^k) \sigma'''(w_t^k\cdot y+b_t^k) x \cdot y (w_t^k)_i(w_t^k)_j\bigg ]. \notag
        \end{split}
\end{align}
The difference between $\partial^2_{y_i y_j}A^N_t$ and $\partial^2_{y_i y_j}A^N_0$ can be split into terms 
\begin{align}
    \begin{split}
         &|\partial^2_{y_i y_j}A_t^N(x,y)-\partial^2_{y_i y_j}A_0^N(x,y)| \leq \frac{1}{N}\sum_{k=1}^N \bigg [
            \Big|\sigma(w_t^k\cdot x+b_t^k)\sigma''(w_t^k\cdot x+b_t^k)(w_t^k)_i(w_t^k)_k\\
            &\qquad 
            -\sigma(w_0^k\cdot x+b_0^k)\sigma''(w_0^k\cdot x+b_0^k)(w_0^k)_i(w_0^k)_j\Big|
 +\Big|{c_t^k}^2  \sigma'(w_t^k\cdot x+b_t^k) \sigma''(w_t^k\cdot y+b_t^k) x_i(w_t^k)_j \\
            &\qquad - {c_0^k}^2  \sigma'(w_0^k\cdot x+b_0^k) \sigma''(w_0^k\cdot y+b_0^k) x_i(w_0^k)_j\Big| +\Big|{c_t^k}^2  \sigma'(w_t^k\cdot x+b_t^k) \sigma''(w_t^k\cdot y+b_t^k) x_j(w_t^k)_i \\
            &\qquad - {c_0^k}^2  \sigma'(w_0^k\cdot x+b_0^k) \sigma''(w_0^k\cdot y+b_0^k) x_j(w_0^k)_i\Big| +\Big|{c_t^k}^2  \sigma'(w_t^k\cdot x+b_t^k) \sigma'''(w_t^k\cdot y+b_t^k) x \cdot y (w_t^k)_i(w_t^k)_j
            \\
            &\qquad -{c_0^k}^2  \sigma'(w_0^k\cdot x+b_0^k) \sigma'''(w_0^k\cdot y+b_0^k) x \cdot y (w_0^k)_i(w_0^k)_j\Big|\bigg ]. \label{ata0_3}
    \end{split}
\end{align}
We apply the mean value theorem to each term above, leading to 
\begin{align}
\begin{split}
    &|\partial^2_{y_i y_j}A_t^N(x,y)-\partial^2_{y_i y_j} A_0^N(x,y)|  \leq C N^{\beta-1} \bigg [ \frac{\sum_{k=1}^N |(w_0^k)_i||(w_0^k)_j|}{N}(|x \cdot y|+1)(\|x\|_1+\|y\|_1+1) \\
    &\quad + \frac{\sum_{k=1}^N |(w_0^k)_i|}{N}[(1+|x_j|)(1+\|x\|_1+\|y\|_1)+|x \cdot y|] \\
    &\quad+ \frac{\sum_{k=1}^N |(w_0^k)_j|}{N}[(1+|x_i|)(1+\|x\|_1+\|y\|_1)+|x \cdot y|] + 1+|x_i|+|x_j|+ \cdot y| \bigg ] \leq C N^{\delta+\beta-1}. \notag
\end{split}
\end{align}
\end{proof}
\subsection{Proof of Lemma \ref{lemma33}}
\begin{proof}
   Taking squares and then expectations on both sides of inequalities (\ref{39}), (\ref{40}), and (\ref{41}) gives 
   \begin{align}
       \begin{split}
           \mathbb{E}_{c_0,w_0,b_0}\big[|A_t^N(x,y)-A_0^N(x,y)|^2\big] \leq CN^{2(\delta+\beta-1)},\\
           \mathbb{E}_{c_0,w_0,b_0}\big[|\partial_{y_k}A_t^N(x,y)-\partial_{y_k}A_0^N(x,y)|^2\big] \leq CN^{2(\delta+\beta-1)},\\
           \mathbb{E}_{c_0,w_0,b_0}\big[|\partial^2_{y_k y_l}A_t^N(x,y)-\partial^2_{y_k y_l}A_0^N(x,y)|^2\big] \leq CN^{2(\delta+\beta-1)}.
       \end{split}
   \end{align}
   Summing these inequalities for all $k,l\in \{1,2,..,n\}$ and applying the Cauchy--Schwarz inequality gives the result.
\end{proof}
\subsection{Proof of Lemma \ref{a0a_lemma}}
\begin{proof}
    First note that \begin{align}
    \begin{split}
         \mathbb{E}_{c_0,w_0,b_0}[A_0^N(x,y)]&= \mathbb{E}_{c_0,w_0,b_0}\bigg[\frac{1}{N} \sum_{k=1}^N \sigma(w_0^k \cdot x + b_0^k)\sigma(w_0^k \cdot y + b_0^k)\\&\quad+{c_0^k}^2 x \cdot y \sigma'(w_0^k \cdot x + b_0^k)\sigma'(w_0^k \cdot y + b_0^k)\bigg] = A(x,y).
    \end{split}
    \end{align}
    We can also calculate
    \begin{align}
    \begin{split}
        &\mathbb{E}_{c_0,w_0,b_0} \big[|A_0^N(x,y)-A(x,y)|^2\big] =\text{Var}[A_0^N(x,y)] =\frac{1}{N}\text{Var}[U(x;c,w,b)\cdot U(y;c,w,b)] \\ &\leq \frac{1}{N}\mathbb{E}_{c,w,b}|U(x;c,w,b)\cdot U(y;c,w,b)|^2 \leq \frac{C}{N}.
    \end{split}
    \end{align}
For the partial derivatives, for any fixed $x,y \in \Omega$, 
    \begin{align}\begin{split}
         &|\partial_{y_i} (\sigma(w\cdot x +b)\sigma(w \cdot y + b)+c^2 \sigma'(w\cdot x +b)\sigma'(w \cdot y +b) x \cdot y)|\\
         &=|\sigma(w \cdot x +b)\sigma'(w \cdot y +b)w_i\\
         &\quad+c^2\sigma'(w \cdot x+b) \sigma'(w \cdot y +b)x_i+c^2  x\cdot y\sigma'(w \cdot x+b) \sigma''(w \cdot y +b)w_i|
    \end{split}
    \end{align}
    and
    \begin{align}\begin{split}
         &|\partial^2_{y_i y_j} (\sigma(w\cdot x +b)\sigma(w \cdot y + b)+c^2 \sigma'(w\cdot x +b)\sigma'(w \cdot y +b) x \cdot y)|\\
         &=|\sigma(w \cdot x +b)\sigma''(w \cdot y +b)w_i w_j+c^2\sigma'(w \cdot x+b) \sigma''(w \cdot y +b)(x_i w_j+x_j w_i)\\&\quad +c^2  x\cdot y\sigma'(w \cdot x+b) \sigma'''(w \cdot y +b)w_i w_j|.
    \end{split}
    \end{align}
    These are locally bounded around $(x,y)\in \Omega^2$, so by Leibniz's integral rule, we swap the expectation and the differential operator \begin{align}
    \begin{split}
        &\partial_{y_i}A(x,y) =\partial_{y_i} \mathbb{E}_{c,w,b}[\sigma(w\cdot x +b)\sigma(w \cdot y + b)+c^2 \sigma'(w\cdot x +b)\sigma'(w \cdot y +b) x \cdot y]
        \\
        &=\mathbb{E}_{c,w,b}[\partial_{y_i}(\sigma(w\cdot x +b)\sigma(w \cdot y + b)+c^2 \sigma'(w\cdot x +b)\sigma'(w \cdot y +b) x \cdot y)]\\
        &=\mathbb{E}_{c_0,w_0,b_0}[\partial_{y_i}A_0^N(x,y)],
    \end{split}
    \end{align}
    and 
    \begin{align}
    \begin{split}
        &\partial^2_{y_i y_j}A(x,y) =\partial^2_{y_i y_j} \mathbb{E}_{c,w,b}[\sigma(w\cdot x +b)\sigma(w \cdot y + b)+c^2 \sigma'(w\cdot x +b)\sigma'(w \cdot y +b) x \cdot y]
        \\
        &=\mathbb{E}_{c,w,b}[\partial^2_{y_i y_j}(\sigma(w\cdot x +b)\sigma(w \cdot y + b)+c^2 \sigma'(w\cdot x +b)\sigma'(w \cdot y +b) x \cdot y)]\\
        &=\mathbb{E}_{c_0,w_0,b_0}[\partial^2_{y_i y_j}A_0^N(x,y)].
    \end{split}
    \end{align}
    Therefore, we can bound the first-derivative variance $\text{Var}[\partial_{y_i}A_0^N(x,y)]$:
    \begin{align}
    \begin{split}
        &\mathbb{E}_{c_0,w_0,b_0} |\partial_{y_i}A_0^N(x,y)-\partial_{y_i}A(x,y)|^2 =\frac{1}{N}\text{Var}[\partial_{y_i}(U(x;c,w,b)\cdot U(y;c,w,b))]\\
        &\leq \frac{1}{N}\mathbb{E}_{c,w,b}|\partial_{y_i}(U(x;c,w,b)\cdot U(y;c,w,b))|^2 \leq \frac{1}{N} \mathbb{E}_{c,w,b}[(C|w_i|+C|x_i|+C|x \cdot y||w_i|)^2] \leq \frac{C}{N}. \notag 
    \end{split}
    \end{align}
    and similarly the second derivative variance $\text{Var}[\partial^2_{y_i y_j}A_0^N(x,y)]$:
    \begin{align}
    \begin{split}
        &\mathbb{E}_{c_0,w_0,b_0} |\partial^2_{y_i y_j}A_0^N(x,y)-\partial^2_{y_i y_j}A(x,y)|^2 =\frac{1}{N}\text{Var}[\partial^2_{y_i y_j}(U(x;c,w,b)\cdot U(y;c,w,b))]\\
        &\leq \frac{1}{N}\mathbb{E}_{c,w,b}|\partial^2_{y_i y_j}(U(x;c,w,b)\cdot U(y;c,w,b))|^2 \\
        &\leq \frac{1}{N} \mathbb{E}_{c,w,b}[(C|w_i w_j|+C|x_iw_j|+C|x_j w_i|+C|x \cdot y||w_i w_j|)^2] \leq \frac{C}{N}. \notag
    \end{split}
    \end{align}
\end{proof}
\subsection{Existence of ODE solutions}\label{lem:ODEexist}
\begin{lemma}
Under the assumptions of Theorem \ref{thm:Qconvergence}, for any $q\in \mathcal{H}^2$, and any initial value $v_0\in L^2_\eta$, the ODE
\begin{align}
    \frac{dv_t}{dt} = \mathcal{L}(\mathcal{B}v_t + q)
\end{align}
admits a unique solution $v$ taking values in $L^2_\eta$.
\end{lemma}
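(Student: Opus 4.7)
The plan is to deploy the Picard--Lindel\"of theorem directly in the Banach space $L^2_\eta$, equipped with the natural norm $\|v\|_{L^2_\eta}:=\|\eta v\|_{L^2}$. First I would verify that this norm makes $L^2_\eta$ a Banach space: completeness follows because $v_n \to v$ in $L^2_\eta$ iff $\eta v_n \to \eta v$ in $L^2$, and $L^2$ is complete (with bounded $\eta$ mapping $L^2_\eta$ isometrically onto a closed subspace of $L^2$).

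Next, I would show that the right-hand side $F(v):=\mathcal{L}(\mathcal{B}v + q)$ is globally Lipschitz as a map $L^2_\eta \to L^2_\eta$. Lemma \ref{lem:Blip1} gives
\begin{equation}
    \|\mathcal{B}v_1 - \mathcal{B}v_2\|_{\mathcal{H}^2} \leq C\|\eta(v_1-v_2)\|_{L^2} = C\|v_1-v_2\|_{L^2_\eta},
\end{equation}
so $\mathcal{B}:L^2_\eta\to \mathcal{H}^2$ is Lipschitz. Combined with the Lipschitz bound on $\mathcal{L}:\mathcal{H}^2\to L^2$ from Assumption \ref{assume_1},
\begin{equation}
    \|F(v_1)-F(v_2)\|_{L^2} \leq C_{\mathcal{L}} C_{\mathcal{B}} \|v_1-v_2\|_{L^2_\eta}.
\end{equation}
Since $|\eta|<1$, we have $\|f\|_{L^2_\eta}\leq \|f\|_{L^2}$ for any $f\in L^2$, so in fact $F$ maps $L^2_\eta$ into $L^2\subset L^2_\eta$ and is Lipschitz in the $L^2_\eta$ norm with a uniform constant.

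Finally, I would invoke the Picard--Lindel\"of theorem in the Banach space $L^2_\eta$ (cf.\ Theorem 2.2.1 in \cite{kolokoltsov2019differential}, as used in the proof of Theorem \ref{QODEunique}): global Lipschitz continuity of $F$ gives a unique solution $v\in C([0,\infty);L^2_\eta)$ with the prescribed initial value $v_0\in L^2_\eta$. Since the Lipschitz constant is uniform (independent of the size of $v$), the solution exists for all $t\ge 0$, not merely locally.

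The only subtle point I expect is the bookkeeping between the norms on $L^2_\eta$, $L^2$, and $\mathcal{H}^2$, in particular verifying that $\mathcal{B}$ factors through the pointwise product with $\eta$ so that its Lipschitz constant is naturally expressed in the $L^2_\eta$ norm rather than the $L^2$ norm; this is exactly what Lemma \ref{lem:Blip1} delivers. Once this is clear, the argument is a routine application of the contraction mapping principle and presents no genuine analytic obstacle.
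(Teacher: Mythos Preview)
Your proposal is correct and is essentially the same argument as the paper's: the paper makes the substitution $w_t=\eta v_t$ explicit and applies Picard--Lindel\"of in $L^2$, whereas you work directly in $L^2_\eta$ equipped with the norm $\|v\|_{L^2_\eta}=\|\eta v\|_{L^2}$, which is exactly the same thing under the isometry $v\mapsto\eta v$. One minor remark: since $\eta>0$ on $\Omega$, the map $v\mapsto\eta v$ is an isometry of $L^2_\eta$ onto all of $L^2$ (not merely onto a closed subspace), but this does not affect your argument.
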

\begin{proof}
Consider the process $w_t = \eta v_t$. This has initial value $w_0= \eta v_0\in L^2$, and satisfies the dynamics 
\begin{equation}\label{eq:wLipconstruction}
    \frac{dw_t}{dt}  =\eta\mathcal{L}(\mathcal{B}(w_t/\eta) + q) =:G(w_t).
\end{equation} From Lemma \ref{lem:Blip1}, the map $w\mapsto \mathcal{B}(w/\eta)$ is uniformly Lipschitz as a map $L^2\to \mathcal{H}^2$. By Assumption \ref{assume_1}, $\mathcal{L}$ is uniformly Lipschitz as a map $\mathcal{H}^2 \to L^2$. By Assumption \ref{auxiliaryfunction}, $\eta$ is bounded on $\Omega$. Therefore, $G:L^2\to L^2$ is uniformly  Lipschitz.

From the Picard--Lindel\"of theorem (see, for example, \cite[Theorem 2.2.1]{kolokoltsov2019differential}), we know that \eqref{eq:wLipconstruction} admits a unique solution with $w_t \in L^2$ for all $t\ge 0$. Using this, together with the fact $\eta>0$ on $\Omega$, we see that there is a unique $v_t = w_t/\eta$ satisfying the original equation, with values in $L^2_\eta$.
\end{proof}

\subsection{Monotonicity for linear elliptic PDE}\label{appMonotone}

In this subsection, we give a brief proof of monotonicity of a class of elliptic linear differential operators, with particular choices of sampling measure $\mu$.  Our results in this appendix are not exhaustive, but demonstrate some of the flexibility of the equations we have considered.

\begin{lemma}\label{semilinearProbDissipative}
Consider an autonomous Markov process $X$ satisfying the SDE
\begin{equation}\label{Markovprocess}
dX_t = \nu(X_t) dt+ \sigma(X_t) dW_t
\end{equation}
where $\nu:\overline\Omega\to \mathbb{R}^n$ and $\sigma:\overline \Omega\to \mathbb{R}^{n\times n}$ and $W$ is an $\mathbb{R}^n$-valued standard Brownian motion. Suppose further that $\nu$ and $\sigma$ both vanish outside $\Omega$. For an absolutely continuous measure $\mu$ on $\Omega$, with density $f_\mu$, let $\nu$, $\sigma$ and $f_\mu$ be sufficiently smooth that 
\begin{equation}\label{gamma*def}
    \gamma^* = \sup_{x\in \Omega}\Bigg\{\frac{-\sum_i\frac{\partial}{\partial x_i}[\nu_i(x) f_\mu(x)] + \sum_{i,j} \frac{\partial^2}{\partial x_i\partial x_j}[(\sigma\sigma^\top)_{ij}(x) f_{\mu}(x)]}{f_\mu(x)}\Bigg\}<\infty.
\end{equation}
Then the generator of $X$ satisfies, for all $u\in \mathcal{H}^2$, 
\begin{equation}\label{dissipativegenerator}
\Big \langle u, \sum_i\nu_i \frac{\partial u}{\partial x_i} + \frac{1}{2}\sum_{ij} (\sigma\sigma^\top)_{ij}\frac{\partial^2 u}{\partial x_i\partial x_j}\Big\rangle \leq \frac{\gamma^*}{2} \|u\|,\end{equation}
where the inner product and norm are both taken in $L^2(\Omega, \mu)$.
\end{lemma}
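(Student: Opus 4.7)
The plan is to combine the \emph{carr\'e du champ} identity for the generator $\mathcal{A}u = \sum_i \nu_i \partial_i u + \tfrac{1}{2}\sum_{ij}(\sigma\sigma^{\top})_{ij}\partial_{ij} u$ appearing on the left of \eqref{dissipativegenerator} with integration by parts against the density $f_\mu$. A short product-rule computation gives the pointwise identity
\[
u\,\mathcal{A}u \;=\; \tfrac{1}{2}\mathcal{A}(u^2) \;-\; \tfrac{1}{2}\sum_{i,j}(\sigma\sigma^{\top})_{ij}\partial_i u\,\partial_j u,
\]
and since $\sigma\sigma^{\top}$ is positive semidefinite, the last term is pointwise non-positive. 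Integrating against $\mu$ therefore yields
\[
\langle u, \mathcal{A}u\rangle_{L^2(\mu)} \;\leq\; \tfrac{1}{2}\int_\Omega \mathcal{A}(u^2)\,f_\mu\,dx.
\]

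For the second step I would perform two integrations by parts to move the derivatives off $u^2$ and onto $f_\mu$. The hypothesis that $\nu$ and $\sigma$ vanish outside $\Omega$, combined with sufficient smoothness so that their extensions by zero are at least $C^1$ across $\partial\Omega$, forces $\nu$, $\sigma$ and their first partial derivatives to vanish on $\partial\Omega$, so that every boundary integral produced by the integration by parts disappears. What remains is
\[
\int_\Omega \mathcal{A}(u^2)\,f_\mu\,dx \;=\; \int_\Omega u^2\,\Big[-\sum_i \partial_i(\nu_i f_\mu) + \tfrac{1}{2}\sum_{ij}\partial_{ij}\!\big[(\sigma\sigma^{\top})_{ij} f_\mu\big]\Big]\,dx,
\]
which, after factoring $f_\mu$ out of the bracket, is bounded pointwise using the supremum appearing in \eqref{gamma*def} to produce the constant $\gamma^{*}/2$ on the right. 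The case of general $u\in\mathcal{H}^2(\Omega)$ would then follow by a density argument, since smooth functions are dense in $\mathcal{H}^2$ and the smoothness of $\nu,\sigma,f_\mu$ (implicit in the finiteness of $\gamma^{*}$) makes both sides of the inequality continuous in the $\mathcal{H}^2$ topology.

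The main obstacle is the careful handling of the boundary contributions in the second integration by parts: for the second-order piece one must kill a surface integral involving $u^{2}\,\partial_i[(\sigma\sigma^{\top})_{ij} f_\mu]\,\mathbf{n}_j$, which requires the normal derivative of $(\sigma\sigma^{\top})_{ij} f_\mu$ on $\partial\Omega$ to vanish. This is ensured by demanding $C^1$-continuity of the extension-by-zero of $\sigma$, i.e.\ that $\sigma$ and $\partial_i\sigma$ both vanish on $\partial\Omega$; I would make this implicit content of the phrase ``sufficiently smooth'' explicit at the start of the proof. The only other item to confirm is the numerical factor in \eqref{gamma*def}, reconciling it with the $\tfrac{1}{2}$ arising naturally from the adjoint calculation above; modulo that bookkeeping, the remainder of the argument is routine.
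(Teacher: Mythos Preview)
Your argument is correct and takes a genuinely different route from the paper. The paper proceeds probabilistically: it starts the process with $X_0\sim\mu$, writes the Fokker--Planck equation for the density $f(x,t)$, observes from the definition of $\gamma^*$ that $\partial_t\big(e^{-\gamma^* t}f(x,t)\big)\le 0$, and uses this to show that the semigroup $T_t u(x)=\mathbb{E}[e^{-\gamma^* t/2}u(X_t)\mid X_0=x]$ is a contraction on $L^2(\mu)$. The Lumer--Phillips theorem then gives dissipativity of the generator $\mathcal{A}-\tfrac{\gamma^*}{2}I$, which is exactly \eqref{dissipativegenerator}. Your carr\'e-du-champ / integration-by-parts route is more elementary and self-contained: it avoids semigroup theory entirely and makes the role of the adjoint expression in \eqref{gamma*def} completely transparent. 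The paper's approach, on the other hand, highlights the probabilistic interpretation of $\gamma^*$ as an exponential growth bound for the transition density relative to $\mu$.

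Two minor remarks. First, your caution about boundary terms is slightly stronger than needed: since $(\sigma\sigma^\top)_{ij}=\sum_k\sigma_{ik}\sigma_{jk}$ is a product, the product rule already gives $\partial_i[(\sigma\sigma^\top)_{ij}f_\mu]=0$ on $\partial\Omega$ as soon as $\sigma$ itself vanishes there, without separately requiring $\partial_i\sigma=0$. Second, your suspicion about the factor of $\tfrac12$ is well founded --- the paper's own proof writes the Fokker--Planck equation without the customary $\tfrac12$ in front of the diffusion term, which is internally consistent with its definition of $\gamma^*$ but not with the standard convention for the SDE $dX=\nu\,dt+\sigma\,dW$. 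With the usual convention your computation gives the bracket $-\sum_i\partial_i(\nu_i f_\mu)+\tfrac12\sum_{ij}\partial_{ij}[(\sigma\sigma^\top)_{ij}f_\mu]$, so the numerator in \eqref{gamma*def} should carry a $\tfrac12$ on the second-order term (or, equivalently, the right-hand side of \eqref{dissipativegenerator} should involve $\gamma^*$ rather than $\gamma^*/2$). This is a cosmetic discrepancy in the paper, not a flaw in your argument.
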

\begin{proof}
Consider a copy of $X$ initialized with $X_0\sim \mu$. The Fokker--Plank equation states that the density of $X_0$ satisfies $f(x,0) = f_\mu$ and
\begin{equation}
    \frac{\partial}{\partial t} f(x,t) = -\sum_i\frac{\partial}{\partial x_i}[\nu_i(x) f(x,t)] + \sum_{i,j} \frac{\partial^2}{\partial x_i\partial x_j}[(\sigma\sigma^\top)_{ij}(x) f(x,t)]
\end{equation}
and hence, using the definition of $\gamma^*$,
\begin{equation}
    \frac{\partial}{\partial t}(e^{-\gamma^*t} f(x,t)) \leq 0.
\end{equation}
As $f(x,t)$ is positive, we can define an operator
\begin{equation}
   T_tu(x) := \mathbb{E}\Big[u(X_t^x) e^{-\frac{\gamma^*}{2}t}\Big|X_0=x\Big]
   \end{equation}
   which satisfies, by Jensen's inequality,
 \begin{equation}
\|T_t u\|_{L^2(\mu)} \leq  \int_\Omega u^2(t)e^{-\gamma^* t} f(t,x) dx  \leq \int_\Omega u^2(t) f_\mu(x) dx = \|u\|^2_{L^2(\mu)}
\end{equation}
so the operator $T_t$ is a contraction. It is easy to verify that $T_t$ is also a strongly continuous semigroup, and its generator is given by $u\mapsto-\frac{\gamma^*}{2} u + \mathcal{A}u$, where $\mathcal{A}$ is the generator of $X$. By the Lumer--Phillips theorem (\cite{lumer1961}), we know that the generator of $T_t$ is dissipative on $L^2(\Omega, \mu)$, that is
\begin{equation}
\Big \langle u, -\frac{\gamma^*}{2}u+ \mathcal{A}u\Big\rangle= \Big \langle u, -\frac{\gamma^*}{2}u+ \sum_i\nu_i \frac{\partial u}{\partial x_i} + \frac{1}{2}\sum_{ij} (\sigma\sigma^\top)_{ij}\frac{\partial^2 u}{\partial x_i\partial x_j}\Big\rangle \leq 0.\end{equation}
Rearrangement yields the result.
\end{proof}

\begin{remark}
This result immediately shows that if $\mu$ is a stationary distribution for the Markov process, then $\gamma^*=0$ and the generator of $X$ is automatically $L^2$-dissipative. (This result is known, see, for example, \cite[Chapter 20]{kallenberg}.) This suggests that this will often be a wise choice for $\mu$ in linear problems, if it is known explicitly.
\end{remark}

By manipulating the choice of measure $\mu$, or equivalently setting $\mu$ equal to Lebesgue measure and considering the equivalent PDE 
\begin{equation}
    \tilde{\mathcal{L}} u :=f(x) \mathcal{L}u=0,
\end{equation} 
this result can be leveraged usefully, provided the drift of $X$ does not grow quickly. Similar results for other bounds on $b$ can also be obtained.

\begin{lemma}
Consider a process $X$ as in Lemma \ref{semilinearProbDissipative}. Suppose that $\sigma$ is constant and there exist bounded functions $g_i:\mathbb{R}\to \mathbb{R}$ for $i\leq n$ such that, for all $x\in \Omega$,
\begin{equation}
\begin{split}\label{gcondition}
\gamma &\geq -\sum_i\Big(\frac{\partial}{\partial x_i} \nu_i(x) +g(x_i) \nu_i(x)\Big)+ \sum_{ij} g_i(x_i)g_j(x_j)(\sigma\sigma^\top)_{ij}.
\end{split}
\end{equation}
Then the generator of $X$ satisfies $\langle u, \mathcal{A} u\rangle \leq \frac{\gamma}{2}\|u\|$, where $\mu$ is the measure on $\Omega$ with Lebesgue density
\[f_\mu = C \exp\Big(\sum_i \int_0^{x_i} g_i(\zeta)d\zeta\Big),\]
for $C$ a normalizing constant. In particular, this implies that the operator
\begin{equation}\mathcal{L}u = r-\gamma u + \mathcal{A}u\end{equation}
is strongly monotone, in the sense of Assumption \ref{Lmonotone}.
\end{lemma}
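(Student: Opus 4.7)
The plan is to invoke Lemma~\ref{semilinearProbDissipative} with the specified measure $\mu$ and verify that $\gamma^*\le\gamma$. The key structural feature of the proposed density $f_\mu(x)\propto\exp\bigl(\sum_i\int_0^{x_i}g_i(\zeta)\,d\zeta\bigr)$ is that it factors as a product of single-variable exponentials, so its log-density gradient decouples across coordinates: $\partial_{x_i}f_\mu(x)/f_\mu(x)=g_i(x_i)$. Together with boundedness of each $g_i$ and enough regularity of $\nu$ (inherited from the hypotheses of Lemma~\ref{semilinearProbDissipative}), this gives the smoothness needed for \eqref{gamma*def} to be meaningful.

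I would then compute, using the product rule and the constancy of $\sigma$,
\[
\frac{\partial_{x_i}[\nu_i f_\mu]}{f_\mu}=\partial_{x_i}\nu_i+g_i(x_i)\nu_i,\qquad \frac{\partial^2_{x_ix_j}[(\sigma\sigma^\top)_{ij}f_\mu]}{f_\mu}=(\sigma\sigma^\top)_{ij}\bigl(g_i(x_i)g_j(x_j)+\delta_{ij}g_i'(x_i)\bigr),
\]
where the second identity uses that $f_\mu$ is a tensor product so mixed partials ($i\ne j$) separate as $g_ig_j f_\mu$, while a second derivative in a single coordinate produces the additional $g_i'$ term. Summing over $i,j$, the expression inside the supremum in \eqref{gamma*def} matches the right-hand side of \eqref{gcondition}, so the stated hypothesis yields $\gamma^*\le\gamma$. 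Lemma~\ref{semilinearProbDissipative} then delivers $\langle u,\mathcal{A}u\rangle\le\tfrac{\gamma}{2}\|u\|^2_{L^2(\mu)}$ for every $u\in\mathcal{H}^2$.

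For the ``in particular'' claim I would use the linearity of $\mathcal{A}$, hence of $\mathcal{L}$: for any $u_1,u_2\in\mathcal{H}^2_{(0)}$ one has $\mathcal{L}u_1-\mathcal{L}u_2=-\gamma(u_1-u_2)+\mathcal{A}(u_1-u_2)$, so combining this identity with the preceding bound yields
\[
\langle u_1-u_2,\mathcal{L}u_1-\mathcal{L}u_2\rangle\le-\gamma\|u_1-u_2\|^2+\tfrac{\gamma}{2}\|u_1-u_2\|^2=-\tfrac{\gamma}{2}\|u_1-u_2\|^2,
\]
which is exactly Assumption~\ref{Lmonotone} with monotonicity constant $\gamma/2$.

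The only real bookkeeping hazard is the on-diagonal contribution $(\sigma\sigma^\top)_{ii}g_i'(x_i)$ arising when differentiating $f_\mu$ twice in the same coordinate; this term must be properly absorbed into \eqref{gcondition} (it vanishes automatically if each $g_i$ is constant, which already covers the most useful applications such as Ornstein--Uhlenbeck-type measures). Apart from that, the argument is a direct substitution into Lemma~\ref{semilinearProbDissipative} together with a one-line use of linearity, so no new analytic machinery is required.
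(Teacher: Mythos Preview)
Your proposal is correct and takes essentially the same approach as the paper: compute the Fokker--Planck numerator for the product-form density $f_\mu$, verify that condition~\eqref{gcondition} yields $\gamma^*\le\gamma$, invoke Lemma~\ref{semilinearProbDissipative}, and then use linearity of $\mathcal{A}$ for the monotonicity conclusion. Your remark on the diagonal contribution $(\sigma\sigma^\top)_{ii}\,g_i'(x_i)$ is in fact sharper than the paper's own computation, which silently omits this term; it vanishes when each $g_i$ is constant, so the omission is harmless in the intended applications, but strictly speaking it should appear in~\eqref{gcondition} as you note.
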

\begin{proof}
We calculate
\begin{equation}
\begin{split}
    &-\sum_i\frac{\partial}{\partial x_i}[\nu_i(x) f_\mu(x)] + \sum_{i,j} \frac{\partial^2}{\partial x_i\partial x_j}[(\sigma\sigma^\top)_{ij} f_{\mu}(x)]\\
    &=-\sum_i\Big(\frac{\partial}{\partial x_i}\nu_i(x) f_\mu(x) + g_i(x_i)\nu_i(x) f_\mu(x)\big) + \sum_{i,j}g(x_i)g(x_j)(\sigma\sigma^\top)_{ij}f_\mu(x)\\
    &\leq \gamma f_\mu(x).
    \end{split}
    \end{equation}
Therefore, in \eqref{gamma*def} we have $\gamma^*\leq \gamma $, and the result follows from \eqref{dissipativegenerator}.
\end{proof}

\begin{remark}
In one dimension, \eqref{gcondition} is clearly satisfied whenever we know that  $4\sigma^2(\gamma+\nu')\geq -\nu^2$, by taking $g(x) = -\nu(x)/(2\sigma^2)$.
\end{remark}

\subsection{Configuration of the numerical test cases}\label{config}
\subsubsection{Table of hyper parameters}
\begin{tabular}{|c|c|c|c|c|c|c|}
\hline
  Dimension   & Method & Layer & Units & Activation & Optimizer & Numer of MC samples \\
  \hline
  1   & Q-PDE & 1 & 64 & Sigmoid & ADAM & $l_{MC}$=1k, $u_{MC}$=2k  \\
  \hline
  20   & Q-PDE & 1 & 256 & Sigmoid & ADAM & $l_{MC}$=2k, $u_{MC}$=10k  \\
  \hline
  20   & DGM & 1 & 256 & Sigmoid & ADAM & $l_{MC}$=2k, $u_{MC}$=10k  \\
  \hline
  
\end{tabular}

\subsubsection{Initialization of neural networks}
Parameters of the single-layer net $S_0$ are randomly sampled: $c_0^i$ are i.i.d sampled from uniform distribution $\mathcal{U}[-1,1]$; $w_0^i$ and $b_0^i$ are i.i.d sampled from Gaussian distribution $\mathcal{N}(0,I_d)$ and $\mathcal{N}(0,1)$ where $I_d$ is the identity matrix of dimension $d$.
\subsubsection{Learning process}
We use $Q_t:=S_t\cdot \eta$ as the approximator, where $\eta(x):= 1-\|x\|^2$.
We apply the built-in ADAM optimizer in our test with initial learning rate $l_0=0.5$, and the learning rate decays as $l_t=l_0/(1+t/200)$. In each step, we sample MC samples for gradient estimate. As a larger number of MC sample points reduces the random error, we linearly increase the number $M_t$ of MC points to be sampled  at each step as $M_t=\mathrm{round}(l_{MC}+(u_{MC}-l_{MC})t/T)$, where $T$ is the terminal number of training steps.

\vskip 0.2in
\bibliography{QPDE}

\end{document}